\documentclass[a4paper,10pt,reqno, english]{amsart}

\usepackage{amsmath,amssymb,amscd,amsthm,amsfonts}
\usepackage{graphicx,subfigure}
\usepackage{hyperref}
\usepackage{dsfont}
\usepackage[nobysame, alphabetic]{amsrefs}
\usepackage{tikz}
\usepackage[capitalise]{cleveref}
\usepackage{mathrsfs}

\newtheorem{theorem}{Theorem}
\newtheorem{lemma}{Lemma}

\newtheorem{corollary}{Corollary}
\newtheorem{example}{Example}

\newtheorem{definition}{Definition}

\newcommand{\ff}{\mathcal{F}}

\def\zz{\mathds{Z}}
\def\cc{\mathds{C}}
\def\rr{\mathds{R}}

\DeclareMathOperator{\St}{St}
\DeclareMathOperator{\skel}{skel}
\DeclareMathOperator{\relint}{relint}
\DeclareMathOperator{\Ind}{Ind}

\DeclareMathOperator{\suppp}{supp}
\DeclareMathOperator{\conv}{conv}
\DeclareMathOperator*{\bigast}{\mathop{\scalebox{1.5}{$\ast$}}}

\newcommand{\ZZ}{\mathds{Z}}
\newcommand{\FF}{\mathds{F}}

\newcommand{\src}{\operatorname{src}}
\newcommand{\dist}{\operatorname{dist}}

\title{KKM theorems and discrete geometry beyond matroids}

\hypersetup{
  pdftitle={KKM theorems and discrete geometry beyond matroids},
  pdfauthor={Pablo Soberon}
}

\author[Sober\'on]{Pablo Sober\'on}\address{Baruch College, City University of New York, One Bernard Baruch Way, New York, NY 10010, United States} 
\email{psoberon@gc.cuny.edu}

\thanks{
The research of P. Sober\'on is supported by NSF CAREER grant DMS-237324 and a PSC-CUNY Trad B award.}

\keywords{Selection structures; KKM theorem; Komiya theorem; colorful Helly theorem;
colorful Carathéodory theorem; geometric transversals; Goodman--Pollack theorem;
matroids; chessboard complexes; matching complexes.}

\subjclass[2020]{Primary 52A35, 52B40; Secondary 05E45, 55M20, 05B35, 91B32}

\begin{document}

\begin{abstract}
We introduce selection structures, a topological framework that extends the role played by color classes and matroids in discrete geometry and KKM theorems. Selection structures allow us to extend classic results to genuinely non-matroidal examples, including chessboard complexes and matching complexes.

We show that several matroidal versions of classic results can be generalized to selection structures.  These include McGinnis' version of Komiya's KKMS theorem, Holmsen's version of Carath\'eodory's theorem, Kalai and Meshulam's version of Helly's theorem, and Sadovek's version of the Goodman--Pollack transversal theorem.
\end{abstract}

\maketitle

\section{Introduction}

A recurring theme in discrete geometry is the search for colorful generalizations of classic results.  Rather than working with a single set of objects, one partitions them into several color classes and asks whether information about all rainbow selections, those containing at most one element from each color, forces some property on a color class. These results often reveal a deeper combinatorial and topological framework underlying the original theorem. 

Colorful theorems are not only easy to state, but have also proved remarkably powerful. Their influence has led to a proliferation of colorful variants throughout discrete geometry and, more generally, topological combinatorics \cites{Matousek2002, Barany2021}. Prominent examples include the colorful versions of Helly's theorem, Carathéodory's theorem, and the Goodman--Pollack transversal theorem which have become cornerstones of modern discrete geometry \cite{DeLoera2019, Amenta2017, Holmsen:2017uf}. In a different direction, the Knaster--Kuratowski--Mazurkiewicz theorem and its colorful extensions play a fundamental role in fair division and envy-free allocation problems \cites{Mcginnis2024survey, Brams:1996wt, Su1999}.

For many colorful results, the structure imposed by the color classes can be replaced by the independence complex of a matroid.  This began with the work of Gil Kalai and Roy Meshulam \cite{Kalai2005}, which led to a rich collection of matroidal analogues of classical geometric and topological results \cites{Holmsen2016, Barany2017, Kim2024, McGinnis2024-arxiv, sadovek2026colorful}.  Matroidal versions of colorful results provide a much higher level of sophistication.  The guiding principle of this manuscript is that matroids can be replaced by much more flexible topological objects, which we call selection structures.  Instead of requiring exchange properties of a matroid, the relevant conditions are bounds on the connectivity and extensions of a simplicial complex.  This allows us to prove vast generalizations of the colorful and matroidal theorems mentioned above.

Intuitively, a selection structure is a pair \( \Sigma=(\mathcal{K},\mathscr{L})\) where $\mathcal{K}$ is a simplicial complex and $\mathscr{L}$ is an upwards-closed family of ``large'' sets of vertices of $\mathcal{K}$.  In the classic setting, the elements of $\mathscr{L}$ correspond to sets that have at least one point of each color class.  The complements of sets not in $\mathscr{L}$ will be used to impose the (monochromatic) conditions on the problem.  The complex $\mathcal{K}$ will be used for the (rainbow) conclusion we want to obtain, and correspond to sets that have at most one element of each color class in the classic setting.  We defer the full definition to \cref{sec:selection-structurees}.

Given a matroid $M$ we can form a selection structure by taking $\mathcal{K}$ the independence complex of $M$ and $\mathscr{L}$ the family of sets of vertices of $M$ of full rank.  The class of selection structures is strictly larger.  We impose an additional condition, $d$-admissibility, which corresponds to an upper bound of $d+1$ on the rank of $M$ in the matroid case.  We provide examples arising from graphs, chessboard complexes, and matching complexes, which are non-matroidal.  Chessboard complexes and matching complexes appear naturally in other problems in topological combinatorics, which makes these extensions appealing.

We prove selection structure versions of Komiya's extension of the KKM theorem, Helly's theorem, Carath\'eodory's, and the Goodman--Pollack transversal theorem.

\subsection{Selection structures in KKM results}
The Knaster--Kuratowski--Mazurkiewicz (KKM) theorem is a cornerstone result in topological combinatorics.  It describes the conditions to guarantee that covers of simplices have non-empty intersection.  The KKM theorem and its generalizations govern many results in envy-free distributions, discrete geometry, game theory, and optimization \cite{Mcginnis2024survey}.

\begin{theorem}[Knaster, Kuratowski, Mazurkiewicz 1929 \cite{Knaster:1929vi}]
    Let $\Delta^d$ be a $d$-dimensional simplex, with vertex set $[d+1]=\{1,\dots,d+1\}$.  Suppose $A_1,\dots, A_{d+1}$ are closed subsets of $\Delta^d$ such that for each face $\sigma$ of $\Delta^d$ we have \( \sigma \subseteq \bigcup_{j \in \sigma} A_j\).
    Then, $\bigcap_{j =1}^{d+1}A_j \neq \emptyset$.
\end{theorem}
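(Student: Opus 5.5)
We prove the statement via Sperner's lemma. The argument reduces the topological claim to a combinatorial labeling statement on fine triangulations and then passes to the limit using compactness and closedness of the sets $A_j$. Identify $\Delta^d$ with the convex hull of the standard basis vectors $e_1,\dots,e_{d+1}$, so a face $\sigma$ is the convex hull of $\{e_j : j\in\sigma\}$.

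\textbf{Step 1 (labeling).} Fix $\varepsilon>0$ and take a triangulation $T_\varepsilon$ of $\Delta^d$ all of whose simplices have diameter less than $\varepsilon$; iterated barycentric subdivision works. For each vertex $v$ of $T_\varepsilon$, let $\sigma(v)$ be the support of $v$, the smallest face of $\Delta^d$ containing it. By hypothesis $v\in\sigma(v)\subseteq\bigcup_{j\in\sigma(v)}A_j$. So we may choose a label $\lambda(v)\in\sigma(v)$ with $v\in A_{\lambda(v)}$. This is exactly a Sperner labeling: every vertex lying in a face $\sigma$ receives a label from $\sigma$.

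\textbf{Step 2 (Sperner's lemma).} Sperner's lemma gives a fully labeled $d$-simplex $\tau_\varepsilon$ in $T_\varepsilon$, whose vertices $v_1^\varepsilon,\dots,v_{d+1}^\varepsilon$ satisfy $v_j^\varepsilon\in A_j$. If a self-contained argument is wanted, Sperner's lemma follows by induction on $d$. Double count pairs $(\tau,F)$, where $\tau$ is a $d$-simplex and $F$ is a facet of $\tau$ with label set exactly $[d]$. Interior such facets are counted twice. Boundary ones lie only on the face $\{1,\dots,d\}$, and there are an odd number of them by induction. A simplex contributes $1$ exactly when it is fully labeled, and $0$ or $2$ otherwise. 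Hence the number of fully labeled simplices is odd, and in particular nonzero.

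\textbf{Step 3 (limit).} Let $\varepsilon=1/n\to 0$. By compactness of $\Delta^d$, pass to a subsequence along which $v_1^{1/n}$ converges to some point $x$. Since $\operatorname{diam}\tau_{1/n}<1/n$, every $v_j^{1/n}$ also converges to $x$. Each $A_j$ is closed and contains $v_j^{1/n}$ for all $n$, so $x\in A_j$ for every $j$. Therefore $x\in\bigcap_{j=1}^{d+1}A_j$.

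The main obstacle is Step 2, the parity argument in Sperner's lemma. The other steps are routine, the only points needing care being that the boundary labeling condition is inherited by faces and that closedness is what allows the limit to lie in each $A_j$.
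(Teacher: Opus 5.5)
Your proof is correct. The labeling in Step 1 is a genuine Sperner labeling, the parity induction in Step 2 is the standard one, and Step 3 uses closedness exactly where it is needed. In Step 2, boundary $(d-1)$-cells with label set $[d]$ can only lie in the facet spanned by $e_1,\dots,e_d$, and the triangulation restricts to a triangulation of that facet.

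The paper itself only quotes this theorem, so there is no direct proof to match. It can, however, be recovered from \cref{thm:selection-structure-komiya}. Take $P=\Delta^d$, $W=[d+1]$, $\mathcal K=2^W$, $\mathscr L=\{W\}$, set $A^w_{\{j\}}=A_j$ and $y^w_{\{j\}}=e_j$ for every $w$, put $A^w_\tau=\emptyset$ for all larger faces, and let $p$ be the barycenter; this forces the faces $\tau_w$ to be all $d+1$ vertices. Your outer structure (discretize on fine triangulations, find a cell whose vertices carry all required labels, pass to a limit) is the same as the paper's passage from \cref{thm:selection-structure-shapley} to \cref{thm:selection-structure-komiya}.

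The real difference is the discrete step. You count fully labeled cells mod $2$, whereas the paper never counts anything. It builds, skeleton by skeleton, a map from $P$ into a complex of label choices, using only $(d-1)$-connectivity. It then turns this into a face-preserving map $Y\colon P\to P$ and invokes \cref{cor:face-preserving}: such a map is homotopic to the identity on $\partial P$, hence has degree one and is surjective, so it hits $p$. With single-valued labels this is the classical degree proof of Sperner's lemma. The piecewise-linear map $v\mapsto e_{\lambda(v)}$ is face-preserving, so it hits the barycenter, and any cell containing a preimage of the barycenter is fully labeled.

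Your route is elementary and even yields an odd number of fully labeled cells. The degree route only gives existence, but it is the one that extends further: to general polytopes with points $y_\tau$ and arbitrary $p$ (Komiya's KKMS setting), to set-valued labels, and to selection structures. In those settings no natural parity count is available, and the only input is the connectivity of the parallel expansions.
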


We can take $\sigma = \Delta^d$ in the statement above, which implies that the sets $A_j$ cover $\Delta^d$.  The family $(A_1,\dots, A_{d+1})$ is called a KKM cover of $\Delta^d$.

There are numerous of extensions of the KKM theorem, which include colorful \cite{Gale1984, Frick2019}, polytopal \cites{Shih1993, Komiya1994}, sparse \cites{Soberon2022, McGinnis2024a}, balanced \cite{Shapley1973}, secretive \cite{Asada2018}, and matroidal versions  \cite{McGinnis2024-arxiv}, among others.

A far-reaching generalization of McGinnis \cite{McGinnis2024-arxiv}, which subsumes most previous results, is a matroidal version of Komiya's KKMS for polytopes.

First let us generalize KKM covers to selection structures.

\begin{definition}[\(\Sigma\)-Komiya cover]
\label{def:choice-komiya-cover}
Let $W$ be a finite set and \( \Sigma=(\mathcal K,\mathscr L) \)
be a \(d\)-admissible selection structure on \(W\).  A family of closed sets
\( \bigl(A^w_\tau
        \mid
        w\in W,\ \tau \mbox{ is a face of }P\bigr) \)
is a \(\Sigma\)-Komiya cover of \(P\) if, for every
\(W'\subseteq W\) such that
\( W\setminus W'\notin\mathscr L\) and every face $\sigma$ of $P$, we have
\[
        \sigma
        \subseteq
        \bigcup_{\tau\subseteq\sigma}
        \ \bigcup_{w\in W'}A^w_\tau .
\]
\end{definition}

We prove the following selection structure generalization of McGinnis' theorem.

\begin{theorem}[Selection-structure Komiya theorem]
\label{thm:selection-structure-komiya}
Let \(P\) be a \(d\)-dimensional polytope, $p \in P$ and let $W$ be a finite set.  Let \( \Sigma=(\mathcal{K},\mathscr{L}) \)
be a \(d\)-admissible selection structure on \(W\).  For every
\(w\in W\) and every face \(\tau\) of \(P\), let \( A^w_\tau\subseteq P\)
be a closed set, and let
\(y^w_\tau\in\tau\) be a point.

If \( \bigl(A^w_\tau \mid w\in W,\ \tau\mbox{ is a face of }P\bigr) \)
is a \(\Sigma\)-Komiya cover of \(P\), then there exist a containment-maximal face
\( I\in\mathcal K \)
and faces
\( \tau_w\) of \( P \) for each \( w\in I\) such that
\( p\in
        \operatorname{conv}\{y^w_{\tau_w}:w\in I\} \)
and
\[
        \bigcap_{w\in I} A^w_{\tau_w}\ne\emptyset .
\]
\end{theorem}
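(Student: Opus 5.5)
The plan is a Komiya-style degree argument, routed through the simplicial complex $\mathcal K$ instead of a simplex. First I reduce to open sets. By compactness and a standard limiting argument, it suffices to prove the statement when each $A^w_\tau$ is replaced by a slightly larger open set, or equivalently by a closed $\varepsilon$-neighbourhood. The cover condition is preserved under this replacement. The conclusion then passes to the limit $\varepsilon\to 0$, since there are only finitely many choices of $I$ and $(\tau_w)_{w\in I}$.

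The cover condition is then used pointwise. For $x\in P$ with carrier face $\sigma(x)$, let $S(x)$ be the set of $w\in W$ such that $x\in A^w_\tau$ for some $\tau\subseteq\sigma(x)$. Taking $W'=W\setminus S(x)$ in \cref{def:choice-komiya-cover}, the point $x$ is not covered by $W'$. Hence $W\setminus W'=S(x)$ must lie in $\mathscr L$. So every point sees a large set of "covering" labels, and the labels can be chosen compatible with its carrier face.

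Next I build a complex $N$ and maps into and out of it. Let $N$ have vertices the pairs $(w,\tau)$, and let a set of pairs be a simplex when the $w$'s are distinct and form a face of $\mathcal K$, and the corresponding $A^w_\tau$ have a common point. The map $g\colon\|N\|\to P$ is defined by sending $(w,\tau)\mapsto y^w_\tau$ and extending affinely. For the map $f\colon P\to\|N\|$, take a fine triangulation $T$ of $P$ refining the face structure, and build $f$ skeleton by skeleton with the following invariant: each vertex $v$ of $T$ goes to a point of the subcomplex of $N$ spanned by pairs $(w,\tau)$ with $w\in S(v)$, $\tau\subseteq\sigma(v)$ and $v\in A^w_\tau$. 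This subcomplex contains the complex $\mathcal K$ restricted to a large set. At this step I use $d$-admissibility, which I expect to say that $\mathcal K|_L$ is $(d-1)$-connected for every $L\in\mathscr L$, or an equivalent extension property. This connectivity lets me extend $f$ over each $k$-simplex of $T$ for $k\le d$. The fineness of $T$ ensures that nearby points share the relevant open sets, so that the image lands in simplices of $N$, i.e.\ in sets with nonempty common intersection.

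The composite $h=g\circ f\colon P\to P$ is face-preserving: a point of a face $\sigma$ is sent into $\operatorname{conv}\{y^w_\tau:\tau\subseteq\sigma\}\subseteq\sigma$. Hence $h$ is homotopic to the identity relative to the boundary structure (straight-line homotopy within faces), so $h$ has degree one and is surjective. Choose $x$ with $h(x)=p$, and let $J\in N$ be the support of $f(x)$. Then $p\in\operatorname{conv}\{y^w_{\tau_w}:w\in J\}$, the set $\{w:(w,\tau_w)\in J\}$ is a face of $\mathcal K$, and $\bigcap A^w_{\tau_w}\ne\emptyset$.

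The main obstacle is the last requirement, that $I$ be a \emph{containment-maximal} face of $\mathcal K$. The face produced by the argument is only some face. My first attempt is to build the maximality into $N$ from the start. I would keep only simplices whose label sets extend to maximal faces in a controlled way, and try to use the admissibility condition to show this does not destroy the connectivity needed in the extension step. If that fails, I would instead enlarge the found face $I_0$ greedily. Since $S(x)\in\mathscr L$ at the intersection point, I would look for a maximal face $I\supseteq I_0$ inside $S(x)$ and assign to each new $w$ a face $\tau_w$ with $x\in A^w_{\tau_w}$. This keeps $p$ in the convex hull and keeps the intersection nonempty. It requires the admissibility axiom to guarantee that faces of $\mathcal K$ extend to maximal faces within large sets, and I expect the paper's definition to be designed to ensure exactly this.
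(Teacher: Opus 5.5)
Your architecture matches the paper's proof. The ingredients are the same: $S(v)\in\mathscr L$ from the cover condition with $W'=W\setminus S(v)$, a skeleton-by-skeleton map into complexes built from $\mathcal K$ with faces of $P$ as labels, the affine map through the $y^w_\tau$, surjectivity of a face-preserving self-map, and a compactness limit. The paper discretizes through a Sperner--Shapley version rather than enlarging the sets, which is immaterial.

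\textbf{The gap is in the extension step.} The complex into which a simplex $\theta$ of $T$ must be mapped is not a copy of $\mathcal K[L]$. It is the parallel expansion
\[
\Gamma(\theta)=\mathcal K\bigl(U(\theta);(D_w(\theta))_{w}\bigr),\qquad U(\theta)=\bigcup_{v\in V(\theta)}S(v),\qquad D_w(\theta)=\{\tau:\ \tau\subseteq\sigma(v),\ v\in A^w_\tau \text{ for some } v\in V(\theta)\}.
\]
Here a single source typically carries several faces $\tau$, coming from different vertices of $\theta$ or even from one vertex. Containing a $(d-1)$-connected copy of $\mathcal K[L]$ does not make $\Gamma(\theta)$ $(d-1)$-connected.

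Your guessed hypothesis, connectivity of every $\mathcal K[L]$ for $L\in\mathscr L$ plus completion, is genuinely insufficient. Take $d=1$, $W=\{v\}$, $\mathcal K$ a single vertex, $\mathscr L=\{\{v\}\}$, $P=[0,1]$ and $p=1/2$. Set $A^v_{\{0\}}=[0,1/2]$, $A^v_{\{1\}}=[1/2,1]$, $A^v_P=\emptyset$, $y^v_{\{0\}}=y^v_P=0$ and $y^v_{\{1\}}=1$. The covering condition holds (only $W'=W$ is constrained), yet no $y^v_\tau$ equals $p$. In your construction, an edge of $T$ crossing $1/2$ would have to be mapped into the two isolated vertices $(v,\{0\})$ and $(v,\{1\})$.

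What the step needs is exactly the connectedness axiom of the paper's definition: $\mathcal K(U;D)$ is $(d-1)$-connected for all $U\in\mathscr L$ and all nonempty finite fibers $D$. Apply it with $U=U(\theta)$ and $D=D(\theta)$, and maintain the invariant $f(|\theta|)\subseteq|\Gamma(\theta)|$ for every simplex, not just vertices; this is also what makes $g\circ f$ face-preserving. Separately, ``fineness'' only produces common points if you work at two scales. Take $S(v)$ and the labels from the original closed sets, and ask for common points only in their $\varepsilon$-neighbourhoods, with mesh smaller than $\varepsilon$. For open sets there is no uniform radius.

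\textbf{On maximality.} Your greedy fallback is what the paper does, and your first option is unnecessary. Completion can be restated as follows: for $U\in\mathscr L$, every face of $\mathcal K(U;D)$ lies in some face $\beta$ of $\mathcal K(U;D)$ with $\src(\beta)\in\operatorname{Facets}(\mathcal K)$. The paper applies this to the support of $f(x)$ inside $\Gamma(\theta)$, so the added labels also lie in $D_w(\theta)$. Each added $A^w_{\tau_w}$ then contains a vertex of $\theta$, the limit makes all of them meet, and enlarging the face keeps $p$ in the convex hull.

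Extending inside $S(z)$ for an arbitrary point $z\in\bigcap_{w\in I_0}A^w_{\tau_w}$, as you propose, is not justified. The inclusion $I_0\subseteq S(z)$ requires $\tau_w\subseteq\sigma(z)$ for the old labels, and that can fail. It does work for the preimage point $x$ itself when $S$ is computed with the enlarged sets. The labels in the support of $f(x)$ come from vertices of the carrier simplex of $x$. Those vertices lie within the mesh of $x$, and their carrier faces are contained in $\sigma(x)$.
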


When we specialize our proof of \cref{thm:selection-structure-komiya} to prove McGinnis' result, we obtain a new proof of his theorem.  Our proof reduces the problem to the construction of a face-preserving map $Y:P \to P$.  This implies $Y|_{\partial P}$ is not null-homotopic, which is the key step in the proof.

\subsection{Selection structures in combinatorial geometry}

Helly's theorem \cite{Helly:1923wr} and Carath\'eodory's theorem \cite{Caratheodory1907} are some of the most important results in combinatorial geometry.  Their colorful versions by Lov\'asz and B\'ar\'any, respectively, pioneered the use of colorful theorems in discrete geometry.  Both results were published in the same paper by B\'ar\'any.

\begin{theorem}[Colorful Helly, Lov\'asz 1982 \cite{Barany1982}]\label{thm:colorful-helly}
Let $d$ be a positive integer and $\ff_1,\dots, \ff_{d+1}$ be finite families of convex sets in $\rr^d$.  If every $(d+1)$-tuple of sets in different $\ff_i$ has non-empty intersection, then some $\ff_i$ must have non-empty intersection.
\end{theorem}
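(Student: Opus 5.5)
The plan is to derive the colorful Helly theorem from the classical KKM theorem stated above. Suppose, for contradiction, that no family $\ff_i$ has non-empty intersection. We will construct a KKM cover of $\Delta^d$ whose common intersection point yields a rainbow $(d+1)$-tuple with empty intersection. By compactness we may assume all sets are compact convex sets. We may do this by intersecting with a large ball containing a point from each intersection $\bigcap_{j} C_j$ of a rainbow tuple; this preserves the hypothesis and does not create new intersections. For each $i$, since $\bigcap \ff_i=\emptyset$ and the sets are compact, the function $f_i(x)=\max_{C\in\ff_i}\dist(x,C)$ is continuous and strictly positive on $\rr^d$.

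An equivalent and cleaner route, which we will actually follow, is the standard lexicographic-minimum argument. We would still mention that the KKM route works, but the direct argument avoids topology. For every rainbow $(d+1)$-tuple, let $x$ be the lexicographically minimal point of its intersection. Among all rainbow tuples, choose one $(C_1,\dots,C_{d+1})$ whose lex-min point $x^*$ is lexicographically maximal; there are finitely many tuples, so this is possible. By the standard Helly-type fact, the lex-min point of an intersection of $d+1$ compact convex sets in $\rr^d$ is already the lex-min point of the intersection of at most $d$ of them. This is proved by a separation argument: the open convex set of points lexicographically smaller than $x^*$ misses $\bigcap C_j$, and Helly's theorem applied to this set together with the $C_j$ removes one index. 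So some index $i$ satisfies that $x^*$ is the lex-min of $\bigcap_{j\neq i}C_j$.

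Now use the assumption $\bigcap\ff_i=\emptyset$. Some $C\in\ff_i$ does not contain $x^*$. Replace $C_i$ by $C$. The new rainbow tuple has non-empty intersection by hypothesis, and its intersection lies in $\bigcap_{j\ne i}C_j$, whose lex-min is $x^*$. Hence its lex-min point is $\ge x^*$, and it differs from $x^*$ because $x^*\notin C$. It is therefore strictly larger, contradicting maximality. So some $\ff_i$ has non-empty intersection.

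The main obstacle is the lemma that the lex-min of $d+1$ compact convex sets is determined by $d$ of them. We would prove it by applying ordinary Helly to the $d+2$ convex sets $C_1,\dots,C_{d+1}$ and $L=\{y: y<_{\mathrm{lex}}x^*\}$, noting that $L$ is convex. All $d+2$ of these sets have empty common intersection, so some $d+1$ of them do too. If the dropped set is $L$, we get $\bigcap C_j=\emptyset$, which is false. So the dropped set is some $C_i$, giving $L\cap\bigcap_{j\ne i}C_j=\emptyset$, which means $x^*$ is the lex-min of $\bigcap_{j\ne i}C_j$. The compactness reduction at the start also needs care, but it is routine.
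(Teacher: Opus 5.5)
Your argument is correct, but it takes a different route from the paper. The paper never proves this statement directly. It recovers it as the special case of \cref{thm:selection-structure-helly} for the colorful selection structure of \cref{ex:colorful}, and proves that theorem in one of two ways. The first is via \cref{thm:selection-structure-d-collapsible}: Wegner's $d$-collapsibility of nerves of convex sets, then an analysis of the first elementary collapse that removes a rainbow face, which yields a facet of $\mathcal K$ of dimension at most $d-1$ and contradicts the selection-structure axioms. The second is via \cref{thm:selection-structure-sadovek} with $k=r=0$, i.e.\ Sadovek's Stiefel-manifold obstruction.

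Your proof is the classical extremal lowest-point argument, and it is elementary and self-contained given ordinary Helly. Your key lemma is checked correctly: the lex-min of $\bigcap_j C_j$ is already the lex-min of $d$ of the sets, by Helly applied to the $C_j$ together with the convex set $L$. This is essentially the geometric fact behind Wegner's collapsibility theorem, so the two proofs are closer than they look.

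What the paper's route buys is generality. It uses only the combinatorics of the nerve, so it applies to any $d$-collapsible complex, e.g.\ the Doignon--Bell--Scarf version. It also replaces your exchange step, swapping $C_i$ for another member of $\ff_i$ while keeping the tuple rainbow, by the completion and connectivity axioms. That is what reaches matroids and non-matroidal structures such as chessboard and matching complexes, where no such swap exists. In the colorful case even the connectivity axiom is unnecessary: every facet of the rainbow complex has $d+1$ vertices, so a facet of dimension at most $d-1$ is already a contradiction.

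Three small repairs to your write-up.
\begin{enumerate}
\item Intersecting with a large ball makes the sets bounded but not closed when the original convex sets are not closed (e.g.\ open half-planes), and then lexicographic minima need not exist. Instead, fix one point in the intersection of each rainbow tuple, let $P$ be this finite set, and replace every $C$ by the polytope $\conv(C\cap P)\subseteq C$. This preserves the hypothesis and can only shrink intersections.
\item Drop the KKM aside and the functions $f_i$. They play no role, and the claim that the KKM route works is not justified as written.
\item If some $\ff_i$ is empty, the conclusion is trivial by the convention $\bigcap\emptyset=\rr^d$. Set this case aside before choosing an extremal rainbow tuple.
\end{enumerate}
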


\begin{theorem}[Colorful Carath\'eodory, B\'ar\'any 1982 \cite{Barany1982}]\label{thm:colorful-caratheodory}
    Let $d$ be a positive integer and $X_1,\dots,X_{d+1}$ be subsets of $\rr^d$.  If $0 \in \conv X_i$ for all $i$, then there is a choice $x_1 \in X_1,\dots, x_{d+1}\in X_{d+1}$ such that $0 \in \conv\{x_1,\dots,x_{d+1}\}$.
\end{theorem}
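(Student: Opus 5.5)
We would prove the colorful Carathéodory theorem with Bárány's original extremal argument, which needs none of the topological machinery of \cref{thm:selection-structure-komiya}. We may assume each $X_i$ is finite: since $0\in\conv X_i$, Carathéodory's theorem gives a subset of at most $d+1$ points of $X_i$ whose convex hull still contains $0$. Then there are finitely many \emph{rainbow selections} $S=\{x_1,\dots,x_{d+1}\}$ with $x_i\in X_i$. Among them, choose one minimizing $\dist(0,\conv S)$; the minimum is attained by finiteness. We want this minimum to be $0$, so suppose instead that it is some $\delta>0$ and derive a contradiction.

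Let $z$ be the unique point of $\conv S$ closest to the origin, so $|z|=\delta>0$. Consider the hyperplane $H=\{x:\langle x,z\rangle=\langle z,z\rangle\}$. By the characterization of nearest points in convex sets, every point $x\in\conv S$ satisfies $\langle x,z\rangle\ge \langle z,z\rangle$. Hence $z$ lies in $\conv(S\cap H)$, and $S\cap H$ spans at most a $(d-1)$-dimensional affine subspace. By Carathéodory's theorem inside $H$, $z$ is a convex combination of at most $d$ points of $S$. So some index $i$ has $x_i$ not needed to represent $z$; that is, $z\in\conv(S\setminus\{x_i\})$.

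Now use the hypothesis $0\in\conv X_i$. If every point $y\in X_i$ satisfied $\langle y,z\rangle>0$, then so would every point of $\conv X_i$, including $0$, which is impossible. So there is $y\in X_i$ with $\langle y,z\rangle\le 0$. Let $S'=(S\setminus\{x_i\})\cup\{y\}$, which is again a rainbow selection. The segment $[z,y]$ lies in $\conv S'$.

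It remains to show that $\dist(0,[z,y])<|z|$, which contradicts minimality. The derivative of $t\mapsto |z+t(y-z)|^2$ at $t=0$ is
\[
2\langle z,y-z\rangle\le -2|z|^2<0,
\]
so points of the segment near $z$ are strictly closer to $0$ than $z$. Therefore $\delta=0$, and $0\in\conv\{x_1,\dots,x_{d+1}\}$.

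The only step needing real care is the one showing that some $x_i$ is redundant for $z$. I would do it cleanly by applying Carathéodory's theorem within the supporting hyperplane $H$, as above. The rest is a routine first-order distance computation.
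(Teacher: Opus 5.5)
Your proof is correct. It is B\'ar\'any's original extremal argument, and each step holds: the nearest-point inequality forces all the weight of $z$ onto $S\cap H$, Carath\'eodory's theorem in $H$ frees an index $i$, and $0\in\conv X_i$ supplies the replacement point $y$.

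The paper takes a genuinely different, topological route. It never proves this statement directly. Instead it recovers it as the colorful case (\cref{ex:colorful}) of \cref{thm:selection-structure-colorful-caratheodory}. The first proof of that theorem builds $s\colon S^{d-1}\to|\mathcal{K}(W;D)|$ from the connectivity of parallel expansions. It then shows that $q\mapsto f(s(q))/\|f(s(q))\|$ is homotopic to the antipodal map, yet extends over the ball. The second proof reduces to the selection-structure Komiya theorem, in the style of Frick--Zerbib.

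Your route is short, avoids degree theory, and is effectively a local-improvement algorithm. Its cost is that the color structure enters explicitly, when the freed index is refilled from $X_i$. The paper trades this exchange step for connectivity, which is how it handles non-matroidal structures; its first proof does not even use the completion axiom.

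Your swap is, however, less special than it looks. The doubled-fiber argument in the proof of \cref{thm:selection-structure-d-collapsible} shows that every facet of a $d$-admissible $\mathcal{K}$ has at least $d+1$ elements. So if $J$ indexes the at most $d$ points representing $z$, then $J$ is not a facet. Put $W'=\{w\notin J: J\cup\{w\}\in\mathcal{K}\}$. Then $J$ is a facet of $\mathcal{K}[W\setminus W']$, so completion forces $W\setminus W'\notin\mathscr{L}$. The hypothesis then gives $w\in W'$ and $y\in X_w$ with $\langle y,z\rangle\le 0$. Thus your extremal method appears to extend to \cref{thm:selection-structure-colorful-caratheodory} as well, at the price of using the completion axiom.
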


As mentioned earlier in the introduction, Kalai and Meshulam proved a matroidal topological Helly theorem \cite{Kalai2005}, which has recently been generalized by Kim and Lew \cite{Kim2024}.  A matroidal colorful Carath\'eodory is a direct consequence of a general result of Holmsen \cite{Holmsen2016}.  McGinnis also shows that his matroidal Komiya KKMS theorem implies the matroidal colorful Carath\'eodory \cite{McGinnis2024-arxiv}.  Blagojevi\'c recently proved a generalization of the matroidal Carath\'eodory theorem \cite{blagojevic2025colorful}.  Neither of Blagojevi\'c's result nor \cref{thm:selection-structure-colorful-caratheodory} seem to imply the other.

We prove selection structure versions of both theorems, generalizing the matroidal and colorful extensions.

\begin{theorem}[Selection-structure Carathéodory]
\label{thm:selection-structure-colorful-caratheodory}
Let \(d\) be a positive integer, \(W\) be a finite set, and
\( \Sigma=(\mathcal K,\mathscr L) \)
be a \(d\)-admissible selection structure on \(W\).
For every \(w\in W\), let \(X_w\subset \rr^d\) be a nonempty finite set.

Suppose that, for every \(W'\subseteq W\) such that \( W\setminus W'\notin\mathscr L \), we have
\[
        0\in
        \operatorname{conv}
        \Bigl(\bigcup_{w\in W'}X_w\Bigr).
\]
Then there exist a set \(I\in\mathcal K\) and points
\(x_w\in X_w\) for all \(w\in I,\)
such that \({0\in \operatorname{conv}\{x_w:w\in I\}}\).
\end{theorem}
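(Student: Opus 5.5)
We derive the statement from \cref{thm:selection-structure-komiya}, following McGinnis' reduction of the matroidal Carathéodory theorem to his Komiya theorem. Since every $X_w$ is finite, we can work inside a large simplex. Let $P\subset\rr^d$ be a $d$-simplex containing $0$ in its interior and containing every $X_w$ in its interior, and take $p=0$. For every $w\in W$ and every face $\tau$ of $P$ we must choose a point $y^w_\tau\in\tau$ and a closed set $A^w_\tau$. We choose them so that a nonempty intersection $\bigcap_{w\in I}A^w_{\tau_w}$ forces every $\tau_w$ to be the full polytope $P$ and every $y^w_P$ to lie in $X_w$.

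\textbf{Step 1 (points).} For a proper face $\tau$ we take $y^w_\tau\in\tau$ arbitrary, say a vertex, and put $A^w_\tau=\emptyset$. For the full face $\tau=P$, a single $y^w_P$ is too little freedom, because we need to pick a different $x\in X_w$ at different places. We therefore enlarge the index set. Let $\widetilde W=\{(w,x):w\in W,\ x\in X_w\}$, with projection $\pi:\widetilde W\to W$. Define $\widetilde{\mathcal K}$ as the complex of sets $S$ with $\pi|_S$ injective and $\pi(S)\in\mathcal K$, and define $\widetilde{\mathscr L}$ as the family of sets $S$ with $\pi(S)\in\mathscr L$. The first sub-step is to check that $(\widetilde{\mathcal K},\widetilde{\mathscr L})$ is again a $d$-admissible selection structure; this is the analogue of parallel extension of a matroid. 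Its facets project onto facets of $\mathcal K$. With this structure we set $y^{(w,x)}_P=x$.

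\textbf{Step 2 (sets).} For each $x$ we take a closed set $A^{(w,x)}_P=C_x$, the region of $P$ ``seen best'' through $x$. Concretely, let $C_x=\{z\in P:\langle z,x\rangle\le \langle z,x'\rangle \text{ for all } x'\in\bigcup_{w} X_w\}$, the closed normal-fan cells of the point set. Any choice works as long as it satisfies the cover condition below.

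\textbf{Step 3 (the cover condition).} Take $\widetilde W'\subseteq\widetilde W$ with $\widetilde W\setminus\widetilde W'\notin\widetilde{\mathscr L}$. We must show that $\widetilde W'$ contains all parallel copies of some $W'$ with $W\setminus W'\notin\mathscr L$, so that $\bigcup_{(w,x)\in\widetilde W'}A_P^{(w,x)}$ covers $P$. Because every $A_\tau$ with $\tau$ proper is empty, the cover condition is demanded only for $\sigma=P$, and only for the union over $\widetilde W'$. The hypothesis $0\in\conv\bigcup_{w\in W'}X_w$ is exactly what guarantees that for each $z\in P$ some $x$ with $\langle z,x\rangle\le 0$ is available. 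So we should really define $A^{(w,x)}_P=\{z:\langle z,x\rangle\le 0\}\cap P$, a closed half-space cell. With this definition covering holds, since any $z$ with $\langle z,x\rangle>0$ for all $x$ in $\bigcup_{w\in W'}X_w$ would separate $0$ from that convex hull.

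The catch is that $\sigma$ ranges over proper faces too, where the sets $A_\tau$ are empty. Here we use that points $z$ on $\partial P$ far from the origin can still lie in the half-space cells, and we instead define $A^{(w,x)}_\tau$ for proper $\tau$ as suitable boundary pieces. I expect this boundary compatibility to be the main obstacle. What I would try first is to scale $P$ huge and replace half-spaces by $\{z:\langle z,x\rangle\le 0\}$ for interior points only, assigning each boundary point $z\in\sigma$ to the face sets with $y$ equal to a vertex of $\sigma$.

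\textbf{Step 4 (conclusion).} Applying \cref{thm:selection-structure-komiya} gives $I$ and faces $\tau_w$ with $0\in\conv\{y^w_{\tau_w}\}$ and a common point $z$. If all $\tau_w=P$, the $y$'s are points $x_w\in X_w$ with $0$ in their convex hull, and $\pi(I)\in\mathcal K$ finishes the proof. If some $\tau_w$ are proper, the common point $z$ lies on the boundary. The convex hull of the corresponding vertices then lies in a proper face far from $0$, which contradicts $0\in\conv$ once $P$ is large. The delicate part is that mixed cases occur, and excluding them needs the half-space inequality at $z$. Finally, we pull the facet of $\widetilde{\mathcal K}$ back to $I\in\mathcal K$.
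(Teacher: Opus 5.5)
\textbf{There is a genuine gap: the boundary step you flag is never resolved, and the remedies you sketch do not close it.}

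Step~1 and the first half of Step~3 are fine. $(\widetilde{\mathcal K},\widetilde{\mathscr L})$ is $d$-admissible, since each parallel expansion of $\widetilde{\mathcal K}$ over $\widetilde U\in\widetilde{\mathscr L}$ is a parallel expansion of $\mathcal K$ over $\pi(\widetilde U)\in\mathscr L$ with merged fibers. Also, any $\widetilde W'$ with $\widetilde W\setminus\widetilde W'\notin\widetilde{\mathscr L}$ contains all copies of some $W'$ with $W\setminus W'\notin\mathscr L$.

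The problem is the boundary. With $A_\tau=\emptyset$ on proper faces, the covering condition fails on every proper face $\sigma$. Once you put nonempty sets there (say $A_\tau=\tau$, which makes the boundary covering trivial), you must exclude proper faces $\tau_w$ in the conclusion, and Step~4 does not do this:
\begin{itemize}
\item The faces $\tau_w$ may differ for different $w$. So the boundary points $y^w_{\tau_w}$ need not lie in one proper face, and nothing in your argument prevents $0$ from being a convex combination of some of them together with some $x$'s.
\item Making $P$ large is irrelevant, because the sign of every inner product $\langle z,y\rangle$ involved is invariant under scaling $P$.
\item Taking $y_\tau$ to be a vertex is also bad. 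For a regular simplex centered at $0$ and $d\ge 2$, two vertices of a common facet have negative inner product.
\item With $A_\tau=\tau$, the boundary terms $\langle z,y_\tau\rangle$ can at best be made positive; they cannot be negative for all $z\in\tau$, since $z=y_\tau$ is allowed. Your half-spaces $\{\langle z,x\rangle\le 0\}$ then give interior terms of the opposite sign, and pairing with the common point $z$ yields nothing.
\item Restricting the cells to interior points, as you suggest, would also destroy closedness.
\end{itemize}

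\textbf{The missing idea.} Arrange that, after pairing $0=\sum_w\lambda_w y^w_{\tau_w}$ with the common point $z$, every term is nonnegative and every term from a face with $A=\tau$ is strictly positive.

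The paper does this by choosing $P$ fine around the origin. Two nonzero points in the cone over a common proper face then have positive inner product, and each such cone meets each $X_w$ in at most one point. It sets $A^w_P=\emptyset$. For a proper face $\tau$, it takes $y^w_\tau$ to be the radial projection onto $\tau$ of the point $x^w_\tau\in X_w\cap\operatorname{cone}(\tau)$, with $A^w_\tau=\{q\in P:\langle q,x^w_\tau\rangle\ge 0\}$; if that cone misses $X_w$, it sets $A^w_\tau=\tau$. The different choices $x\in X_w$ are thus encoded by faces of $P$. No enlargement of $W$ is needed, and the boundary covering is automatic.

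Your own setup can also be repaired:
\begin{itemize}
\item Flip the sign to $A^{(w,x)}_P=\{q\in P:\langle q,x\rangle\ge 0\}$.
\item For proper $\tau$, take $A^{(w,x)}_\tau=\tau$ and let $y^{(w,x)}_\tau$ be the point of $\tau$ nearest the origin. Then $\langle z,y_\tau\rangle\ge\|y_\tau\|^2>0$ for every $z\in\tau$.
\end{itemize}
With this choice, all coefficients attached to proper faces vanish. The surviving $y$'s are points $x_w\in X_w$ indexed by $\pi(\widetilde I)\in\mathcal K$, where $\widetilde I$ is the facet of $\widetilde{\mathcal K}$ produced by the Komiya theorem. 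As written, though, this step is missing, so the proposal does not yet prove the theorem.
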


We state our selection structure Helly theorem in terms of the contrapositive version (i.e., guaranteeing empty intersections).

\begin{theorem}[Selection-structure Helly]\label{thm:selection-structure-helly}
    Let $W$ be a finite set and $\mathcal{F}=\{F_w: w \in W\}$ be a family of convex sets in $\rr^d$ indexed by $W$.  Let $\Sigma=(\mathcal{K}, \mathscr{L})$ be a $d$-admissible selection structure on $W$.  Suppose that for every $W' \subseteq W$ such that $W \setminus W' \not\in \mathscr{L}$ we have $\bigcap_{w\in  W'} F_w = \emptyset$.  Then, there exists $I \in \mathcal{K}$ such that $\bigcap_{w \in I}F_w = \emptyset$.
\end{theorem}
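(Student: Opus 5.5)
We will derive the selection-structure Helly theorem from the selection-structure Komiya theorem (\cref{thm:selection-structure-komiya}) by a standard nerve/duality argument, in the same way the classical colorful Helly theorem follows from a KKM-type statement. We argue by contradiction. Suppose that for every $I\in\mathcal K$ we have $\bigcap_{w\in I}F_w\neq\emptyset$. Since $W$ is finite, we may first replace each $F_w$ by a polytope. For every $I\in\mathcal K$ pick a point $q_I\in\bigcap_{w\in I}F_w$, and for every $W'\subseteq W$ with $\bigcap_{w\in W'}F_w\ne\emptyset$ pick a witness point as well. Then replace $F_w$ by the convex hull of the finitely many chosen points lying in it. This keeps all intersections among faces of $\mathcal K$ nonempty, and it can only shrink the other intersections, so the hypothesis still holds. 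After this step all sets are compact polytopes.

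The key step is to build a $\Sigma$-Komiya cover of a $d$-dimensional polytope $P$ (a large simplex containing everything) from the family $\ff$. Each set $A^w_\tau$ should record how close a point is to $F_w$. Concretely, for each face $\tau$ we take $A^w_\tau=\{x\in P:\dist(x,F_w)\ge \dist(x,F_v)\text{ for all }v\}$, or a variant weighted by the face $\tau$, with $y^w_\tau\in\tau$ chosen suitably. The covering condition should then come from the hypothesis. If $W\setminus W'\notin\mathscr L$, then $\bigcap_{w\in W'}F_w=\emptyset$, so by compactness the function $x\mapsto\max_{w\in W'}\dist(x,F_w)$ is bounded below by a positive constant. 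Hence every point lies in some set indexed by $W'$. To handle the faces $\sigma$ of $P$, we plan to use the Komiya freedom in $\tau$: put $A^w_\tau=\emptyset$ for proper faces, and take $A^w_P$ to be the set above, intersected with the region where $w$ is ``responsible''. This handles the condition for $\sigma=P$. For proper faces we expect to have to reparametrize, for instance by using $P$ as the simplex of weights $\lambda$ with $\sum\lambda_wF_w$ as in Lovász's argument.

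An alternative that may fit $d$-admissibility more cleanly is to pass to the contrapositive via \cref{thm:selection-structure-colorful-caratheodory}, by linear programming duality. Emptiness of $\bigcap_{w\in W'}F_w$, with each $F_w$ written as an intersection of halfspaces $\{x:\langle a,x\rangle\le b\}$, is equivalent to $0$ lying in the convex hull of the lifted vectors $(a,-b)\in\rr^{d+1}$, up to a strictness condition. This is the standard derivation of colorful Helly from colorful Carathéodory. The lift goes into dimension $d+1$, however, so we would homogenize: intersect with a hyperplane to reduce to $\rr^d$, or use that the lifted points lie in a halfspace. We then apply \cref{thm:selection-structure-colorful-caratheodory} with $X_w$ the lifted normals of $F_w$. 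The conclusion gives $I\in\mathcal K$ and $x_w\in X_w$ whose halfspaces certify $\bigcap_{w\in I}F_w=\emptyset$, which is the required contradiction.

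The main obstacle is the dimension count in this duality. A direct lift to $\rr^{d+1}$ would call for $(d+1)$-admissibility. We will first try the standard trick of working with convex cones and Farkas' lemma. Fix a point (say the origin, after translation) that is assumed, in the contradiction hypothesis, to lie in nothing certified. Then project the lifted vectors $(a,-b)$ centrally onto an affine hyperplane $\{t=-1\}$, placing $0$ suitably so that membership of $0$ in the cone becomes membership of a point in a convex hull in $\rr^d$. Strictness issues are resolved by compactness: shrink each polytope slightly, or perturb the right-hand sides $b$ by a small $\varepsilon$.
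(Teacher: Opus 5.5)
Your proposal has a genuine gap: neither route closes. The dimension count you flag as the main obstacle is a real obstruction, not a technicality.

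\textbf{The Komiya route.} The sets $A^w_\tau$ of a $\Sigma$-Komiya cover are fixed before $W'$ is chosen. With $A^w_\tau=\{x:\dist(x,F_w)\ge\dist(x,F_v)\ \forall v\in W\}$, a point is covered only by the globally farthest sets, and their indices need not lie in $W'$. So positivity of $\max_{w\in W'}\dist(x,F_w)$ does not give the covering condition. Setting $A^w_\tau=\emptyset$ for proper $\tau$ breaks the condition on every proper face $\sigma$ of $P$, since the union over $\tau\subseteq\sigma$ is then empty. You also never explain how $\bigcap_{w\in I}A^w_{\tau_w}\neq\emptyset$ together with $p\in\conv\{y^w_{\tau_w}\}$ would certify $\bigcap_{w\in I}F_w=\emptyset$.

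\textbf{The duality route.} Farkas' lemma turns $\bigcap_{w\in W'}F_w=\emptyset$ into $(0,1)\in\operatorname{cone}$ of the lifted vectors $(a,-b)\in\rr^{d+1}$, which is a conic statement one dimension up. Central projection makes it a convex-hull statement in $\rr^d$ only if all lifted vectors lie in a common open halfspace together with $(0,1)$. That means some point $x_0$ must violate every inequality you use.

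No point violates all facet inequalities of even one bounded polytope. There are $\lambda_j\ge 0$, not all zero, with $\sum_j\lambda_j a_j=0$, and then $0=\sum_j\lambda_j\langle a_j,x_0\rangle>\sum_j\lambda_j b_j\ge 0$.

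Suppose instead you keep in $X_w$ only halfspaces containing $F_w$ and avoiding a fixed $x_0$. Then the hypothesis of \cref{thm:selection-structure-colorful-caratheodory} is no longer implied. You would need every family $\{F_w:w\in W'\}$ with $W\setminus W'\notin\mathscr L$ to be separated by halfspaces avoiding the same $x_0$. Take $d=1$ and the colorful structure with red sets $[0,1],[2,3]$ and blue sets $[10,11],[12,13]$. The red class forces $x_0\in(1,2)$ and the blue class forces $x_0\in(11,12)$. Shrinking the sets or perturbing $b$ only handles strictness, not this. What your route actually needs is a \emph{conic} selection-structure Carath\'eodory theorem in $\rr^{d+1}$ under $d$-admissibility. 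That is a different statement from the one the paper proves, and you would have to prove it separately.

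\textbf{What the paper does.} It avoids duality entirely. The nerve $N(\ff)$ is $d$-collapsible, and by hypothesis every $\tau\in N(\ff)$ has $W\setminus\tau\in\mathscr L$. Assume $\mathcal K\subseteq N(\ff)$ and look at the first elementary collapse $[\sigma,\eta]$ that removes a face of $\mathcal K$. Completion applied to $U=W\setminus(\eta\setminus\sigma)\in\mathscr L$ forces $\sigma$ to be a facet of $\mathcal K$ with $\dim\sigma\le d-1$. Doubling the fibers over $\sigma$ gives a sphere $S^{\dim\sigma}$ in $\mathcal K(W;D)$ whose top simplices are maximal faces. This is a nonzero homology class, contradicting $(d-1)$-connectivity. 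A second proof applies \cref{thm:selection-structure-sadovek} with $k=r=0$.
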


We give two proofs of the selection structure Carath\'eodory theorem.  We also give two proofs of the selection structure Helly theorem.  One gives a very simple proof of \cref{thm:selection-structure-helly} and ,when applied to the particular case of a matroid, gives a new proof of the Kalai--Meshulam Helly theorem for convex sets.  The second one generalizes the full Kalai--Meshulam topological Helly theorem for good covers in $\rr^d$.

We also prove selection structure versions of the Goodman--Pollack theorem on transversals to families of convex sets.

The Goodman--Pollack transversal problem asks for conditions that guarantee the
existence of an affine \(k\)-dimensional transversal to a finite family of convex
sets in \(\rr^d\).  The case $k=0$ is Helly's theorem.

The classic result is the solution to the case $k=d-1$ by Goodman and Pollack \cites{Goodman1988, Pollack1990, Wenger1990}.  There is a long history of variations and extensions of the Goodman--Pollack result \cites{Goodman1993, Holmsen:2017uf}.  Recently, McGinnis and Sadovek fully characterized the general case \cite{McGinnis2026}, achieving the culmination of a decades-long effort in the area.  Shortly after that, Sadovek proved a matroidal version of the McGinnis--Sadovek theorem \cite{sadovek2026colorful}, extending the previous colorful versions of Goodman--Pollack \cites{Holmsen2022, Cheong2024}.

The Goodman-–Pollack transversal problem is the most delicate application of selection structures.  We show that the matroidal part of Sadovek’s argument can be transferred to selection structures. This gives transversal theorems indexed by non-matroidal objects, including chessboard and matching complexes.

We present the selection structure version in \cref{thm:selection-structure-sadovek}.  Since the characterization by McGinnis and Sadovek for the existence of $k$-dimensional affine transversals involves a delicate condition, we defer the definitions and statement of the result to \cref{sec:selection-structure-goodman-pollack}.  Just like the McGinnis-Sadovek theorem implies Helly's theorem, \cref{thm:selection-structure-sadovek} implies \cref{thm:selection-structure-helly}.

We also show that \cref{thm:selection-structure-sadovek} implies a selection structure version of Dolnikov's theorem on $k$-transversals to convex sets \cite{Dolnikov1992}.  This is the central argument for Dolnikov's proof of the central transversal theorem \cites{Dolnikov1992, Zivaljevic1990}.  We state here the version for $\rr^d$, while the full statement in \cref{sec:selection-structure-goodman-pollack} also generalizes to $\cc^d$.

\begin{theorem}[Selection-structure Dolnikov transversal theorem]
\label{thm:selection-structure-dolnikov-real}
Let \(0\le r\le k<d\) be integers, and \(
        N=(d-k)(r+1)\).
Let \(W\) be a finite set, let \( W=W_1\sqcup\cdots\sqcup W_{r+1}\)
be a partition, and let \( \Sigma=(\mathcal K,\mathscr L) \)
be an \(N\)-admissible selection structure on \(W\).
For each \(w\in W\), let \(F_w\subseteq \rr^d\) be a nonempty compact
convex set.

Assume that for every \(i=1,\dots,r+1\) and every face
\(I\in\mathcal K\) such that
\( I\subseteq W_i\) and \( |I|\le d-k+1\),
we have
\( \bigcap_{w\in I}F_w\neq\emptyset\).  Then there exists \(W'\subseteq W\) such that the family \( \{F_w:w\in W'\}\)
has an affine \(k\)-transversal and
\( W\setminus W'\notin\mathscr L\).
\end{theorem}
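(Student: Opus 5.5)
This will be derived from the selection-structure Goodman--Pollack theorem, \cref{thm:selection-structure-sadovek}, which the introduction says implies it. That theorem is stated in \cref{sec:selection-structure-goodman-pollack}. I assume it has the McGinnis--Sadovek form: for an $N$-admissible selection structure with $N=(d-k)(r+1)$ as in the statement, if a certain ``no-obstruction'' condition holds for every $I\in\mathcal K$, then some $W'$ with $W\setminus W'\notin\mathscr L$ has $\{F_w:w\in W'\}$ admitting an affine $k$-transversal. In McGinnis--Sadovek, the obstruction is phrased via polynomial/Stiefel--Whitney type conditions on the tautological bundle over the Grassmannian $G_k(\rr^d)$, together with a labeling of the index set by $r+1$ classes. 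Dolnikov's theorem is the special case where the labels come from a partition $W=W_1\sqcup\dots\sqcup W_{r+1}$ and each class must meet the ``$(d-k)$-fold'' condition. So the plan is to instantiate \cref{thm:selection-structure-sadovek} with the coloring $w\mapsto i$ for $w\in W_i$ and verify its hypothesis.

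\textbf{Step 1.} For each $i$ and each $k$-dimensional linear subspace $V$, project along $V$ onto $V^\perp\cong\rr^{d-k}$. If the sets $F_w$, $w\in I\subseteq W_i$, $|I|\le d-k+1$, intersect, then so do their projections. By Helly's theorem in $\rr^{d-k}$, every face $I\in\mathcal K$ contained in $W_i$ then has pairwise-intersecting projections in the required Helly sense. The condition in \cref{thm:selection-structure-sadovek} is, in the real case, a statement that for each class the family of projections behaves like an intersecting family of dimension $d-k$. That family contributes the Euler class $e(\gamma^\perp)$, i.e.\ $w_{d-k}$ of the orthogonal complement bundle. The product over the $r+1$ classes is $w_{d-k}(\gamma^\perp)^{r+1}$.

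\textbf{Step 2.} Verify that this characteristic class is nonzero in $H^{N}(G_k(\rr^d);\FF_2)$ whenever $r\le k$. This is the classical Dolnikov computation: the top class $w_{d-k}^{k}$ of $\gamma^\perp$ is the fundamental class of $G_k(\rr^d)$, and for $r+1\le k$ the lower powers $w_{d-k}^{r+1}$ are nonzero as well. This nonvanishing is exactly the input \cref{thm:selection-structure-sadovek} needs, with dimension count $(d-k)(r+1)=N$, matching the $N$-admissibility hypothesis.

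\textbf{Step 3.} Apply \cref{thm:selection-structure-sadovek} to get $W'$ with $W\setminus W'\notin\mathscr L$ and a common $k$-transversal for $\{F_w:w\in W'\}$. Compactness of the $F_w$ is used so that the transversal condition is closed, as \cref{thm:selection-structure-sadovek} presumably requires.

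The main obstacle is Step 1: matching the precise hypothesis format of \cref{thm:selection-structure-sadovek}. That hypothesis may be phrased per face $I\in\mathcal K$ with mixed colors, not per class. I would handle this by restricting to $I\cap W_i$, which is still a face of $\mathcal K$ since $\mathcal K$ is a simplicial complex. Then I would check that the obstruction class splits as a product over the classes, each factor being witnessed by Helly in $\rr^{d-k}$ applied to subfaces of size at most $d-k+1$.
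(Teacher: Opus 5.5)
There is a genuine gap: you never identify, and so never verify, the actual hypothesis of \cref{thm:selection-structure-sadovek}. That theorem has no characteristic-class condition to check. Its hypothesis is the existence of a map $\varphi\colon W\to\rr^r$ that models $(d-k,\Sigma,\rr)$-dependencies of $\{F_w\}$ in the sense of \cref{def:selection-structure-models-dependencies}. All of the topology (the obstruction on $\skel_N V_{d-k}(\rr^{d+1})$) is already absorbed into \cref{thm:stiefel-obstruction} inside its proof.

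So Steps 1 and 2 verify something the black box does not ask for. Step 2 is also false as stated. For $r=k$ you have $N=(d-k)(k+1)>k(d-k)=\dim G_k(\rr^d)$, so $H^N(G_k(\rr^d);\FF_2)=0$ and $w_{d-k}(\gamma^\perp)^{r+1}=0$. Your own sentence only covers $r+1\le k$, while the statement allows $r=k$.

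The missing idea is a specific choice of $\varphi$ combined with Carath\'eodory in coefficient space.
\begin{enumerate}
\item Take affinely independent $p_1,\dots,p_{r+1}\in\rr^r$ and set $\varphi(w)=p_i$ for $w\in W_i$.
\item Fix a facet $B$ and a nontrivial system $\sum_{w\in B}a_{j,w}=0$, $\sum_{w\in B}a_{j,w}\varphi(w)=0$. Write $A_w=(a_{1,w},\dots,a_{d-k,w})\in\rr^{d-k}$. Affine independence of the $p_i$ forces $\sum_{w\in B\cap W_i}A_w=0$ for every $i$. Pick $i_0$ such that some $A_w\neq 0$ with $w\in B\cap W_{i_0}$.
\item Then $0$ lies in the convex hull of the nonzero $A_w$ with $w\in B\cap W_{i_0}$. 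Carath\'eodory in $\rr^{d-k}$ gives $J\subseteq B\cap W_{i_0}$ with $|J|\le d-k+1$ and weights $t_w>0$ such that $\sum_{w\in J}t_wA_w=0$.
\item Since $J\in\mathcal K$, the hypothesis gives a point $q\in\bigcap_{w\in J}F_w$. Set $\lambda_w=t_w$ on $J$ and $\lambda_w=0$ elsewhere, and take $q_w=q$ for $w\in J$. Then $\sum_w\lambda_w a_{j,w}q_w=q\sum_{w\in J}t_wa_{j,w}=0$, and the new system is nontrivial. So $\varphi$ models the dependencies, and \cref{thm:selection-structure-sadovek} applies.
\end{enumerate}

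Two points to take from this. Your per-class restriction idea in the last paragraph is the right instinct, but the splitting comes from affine independence of the $p_i$, not from a product decomposition of an obstruction class. And the bound $d-k+1$ enters through Carath\'eodory on the dependency vectors $A_w$, not through Helly on projections to $V^\perp$. Helly would only produce $k$-transversals of the subfamilies, whereas the dependency condition needs a common point of a small subfamily so that $q$ factors out of $\sum\lambda_wa_{j,w}q_w$.
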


We define selection structures properly in \cref{sec:selection-structurees}, and introduce new examples in \cref{sec:chessboard-matching-choice}.  The first results we prove are the extensions of the KKM theorem in \cref{thm:selection-structure-komiya}.  Then, we prove our selection-structure versions of Carath\'eodory and Helly in \cref{sec:helly-and-caratheodory}, before proving our results related to the Goodman--Pollack problem in \cref{sec:selection-structure-goodman-pollack}.  We conclude with remarks and open problems in \cref{sec:remarks}.

\section{Selection structures}\label{sec:selection-structurees}

In this section we will define selection structures, the main new object used in the results of this manuscript.

Let \(W\) be a finite set.  We call the elements of $W$ the sources.  Let $\mathcal{K}$ be a simplicial complex on \(W\) and $\mathcal{B}=\operatorname{Facets}(\mathcal{K})$ be the antichain of containment-maximal faces of $\mathcal{K}$.  We use the notation $\mathcal{B}$ since they correspond to bases if $\mathcal{K}$ is the independence complex of a matroid.  For our proofs, we require some flexibility to expand $\mathcal{K}$.  We call the following construction a \textit{parallel expansion}.  For a set $U \subseteq W$, let $\mathcal{K}[U] = \{ \sigma \cap U: \sigma \in \mathcal{K}\}$.

Let $D=\{D_w: w \in W\}$ be a family of nonempty finite sets indexed by $W$, which we call fibers. Let \(U\subseteq W\).  The iterated join $\displaystyle \bigast_{w \in W} D_w$ can be interpreted as the simplices that have at most one vertex from each source.  Define
\[
        \mathcal{K}(U;D) = \bigcup_{F \in \mathcal{K}[U]} \left( \bigast_{w \in F} D_w  \right).
\]

In other words, we take the simplices from the iterated join such that the sources they use form a face of $\mathcal{K}[U]$.  For a face $\eta$ of $\mathcal{K}(U;D)$, we denote by $\src(\eta) \in \mathcal{K}$ the set of sources it uses.

Similarly, we define
\[
        \mathcal B(U;D)
        =
        \{\beta\in \mathcal K(U;D):
        \operatorname{src}(\beta)\in\mathcal B\}.
\]

\begin{definition}[\(d\)-admissible selection structure]
\label{def:d-admissible-choice-datum}
Let \(d\ge 0\).  A \(d\)-admissible selection structure on \(W\) is a pair
\( \Sigma=(\mathcal{K},\mathscr{L})\),
where \(\mathcal{K}\) is a simplicial complex, and
\(\mathscr{L}\subseteq 2^W\) is a non-empty upwards-closed family, such that the
following two conditions hold.

\begin{enumerate}
\item[{(Connectedness)}]
For every \(U\in\mathscr{L}\) and every family of finite sets \({D=\{D_w: D_w \neq \emptyset, w \in W}\}\), the space
\( \mathcal{K}(U;D) \)
is \((d-1)\)-connected.

\item[{(Completion)}]
For every $U \in \mathscr{L}$, we have $\operatorname{Facets}(\mathcal{K}[U]) \subseteq \operatorname{Facets}(\mathcal{K})$.

\end{enumerate}
\end{definition}

The completion axiom states that every induced complex $\mathcal{K}[U]$ cannot have new containment-maximal faces.  Another way to state this is to consider $\mathcal{B}=\operatorname{Facets}(\mathcal{K})$.  Then, for every \(U\in\mathscr{L}\), every family of finite sets \(D=\{D_w: D_w \neq \emptyset, w \in W\}\), and every face
\( \eta\in\mathcal{K}(U;D)\),
there exists \(\beta\in\mathcal B(U;D)\)
with \(\eta\subseteq \beta\).  The requirement that the sets $D_w$ are finite is not strictly necessary.  The examples of selection structures we present still satisfy the connectedness condition when we introduce infinite fibers.  However, since the proofs of our results only use finite fibers, we impose this condition.

One key example is given by matroids, which give $d$-admissible selection structures.

\begin{example}[Matroids]\label{example:matroids}
Let \(M\) be a matroid of rank greater than or equal to \(d+1\) with vertex set \(W\) and let $r_M$ be its rank function.  Let $\operatorname{Ind}(M)^{(d)}$ be the $d$-skeleton of its independence complex.  We define
\[
        \mathcal{K}=\operatorname{Ind}(M)^{(d)},
        \qquad
        \mathscr{L}_{M,d}
        =
        \{U\subseteq W:r_M(U)\ge d+1\}.
\]
Then
\(\Sigma_M
        =
        (\operatorname{Ind}(M)^{(d)},\mathscr{L}_{M,d}) \)
is a \(d\)-admissible selection structure.
\end{example}

\begin{proof}
First, \(\mathscr L_{M,d}\) is nonempty, since \(W\in \mathcal L_{M,d}\), and it is upward
closed by monotonicity of the matroid rank function.

We verify the connectedness condition. Fix \(U\in \mathcal L_{M,d}\) and a family of
nonempty finite fibers \(D=\{D_w:w\in W\}\).
Let
\[
        \widetilde U=\{(w,a):w\in U,\ a\in D_w\},
\]
and let \(\pi:\widetilde U\to U\) be the projection \(\pi(w,a)=w\). Consider the
parallel extension \(M_{U,D}\) of the restricted matroid \(M|U\) on the ground set
\(\widetilde U\): a set \(I\subseteq \widetilde U\) is independent in \(M_{U,D}\) if and
only if \(\pi\) is injective on \(I\) and \(\pi(I)\) is independent in \(M|U\). The
elements in each fiber \(D_w\) form a parallel class, and \(M_{U,D}\) has rank
\(r_M(U)\).

Since \(U\in\mathscr L_{M,d}\), we have \(r_M(U)\ge d+1\). Let \(T_{d+1}(M_{U,D})\)
denote the rank-\((d+1)\) truncation of \(M_{U,D}\). Its independent sets are
precisely the independent sets of \(M_{U,D}\) of cardinality at most \(d+1\).
Therefore
\[
        \mathcal{K}(U;D)=\operatorname{Ind}\bigl(T_{d+1}(M_{U,D})\bigr).
\]

The independence complex of a matroid is shellable \cite{Bjoerner1992}. Since
\(T_{d+1}(M_{U,D})\) has rank \(d+1\), its independence complex is a pure
\(d\)-dimensional shellable complex. Hence it is \((d-1)\)-connected. This means that
\(\mathcal{K}(U;D)\) is \((d-1)\)-connected.

The completion axiom is a direct consequence of the matroid basis extension property.
\end{proof}

A specialization of the example above is the following, which is related to a wide number of applications in discrete geometry.

\begin{example}[colorful choices]\label{ex:colorful}
    Suppose the elements of $W$ each have one of $d+1$ possible colors.  Consider $\mathcal{K}$ the simplicial complex of rainbow sets (the sets that have at most one element of each color), and $\mathscr{L}$ the family of sets that have at least one element from each color.  These correspond to the elements of \cref{example:matroids} with a partition matroid of rank $d+1$, and therefore form a $d$-admissible selection structure.
\end{example}

\cref{ex:colorful} recovers the classic colorful versions of the theorems we mentioned.  Now let us show examples of selection structures which do not come from matroids, to show that selection structures are indeed more general.

\begin{example}[Selection structures from graphs]\label{ex:selection-structure-graph}
    Let $H$ be a connected graph without isolated vertices, with vertex set $W$ and edge set $E$.  Consider $\mathcal{K}$ the graph $H$ as a simplicial complex.  Let
    \begin{align*}
        \mathscr{L} & = \left\{W':\begin{gathered} \mbox{the graph induced by } W' \mbox{ is connected,} \\\mbox{ every vertex in $W \setminus W'$ has an edge to $W'$, and} \\ W' \mbox{has at least two vertices}\end{gathered} \right\}.
    \end{align*}
    The pair \( \Sigma=(\mathcal{K},\mathscr{L})\) is $1$-admissible.  The complex $\mathcal{K}(W;D)$ is simply the graph obtained by replacing any vertex $w$ by an independent set $D_w$, and for any two adjacent vertices $u,v$ we include the full bipartite graph with components $D_u, D_v$.
\end{example}

In \cref{sec:improving-selection-structures} we discuss why this example leads to applications that cannot be recovered by existing matroid versions.  In general, the difficult part of showing that a selection structure is $d$-admissible is to guarantee that, once the set $D$ is involved, the parallel expansions have high enough connectivity.

\section{Chessboard and matching selection structures}
\label{sec:chessboard-matching-choice}

We now give two systematic sources of selection structures that do not come from matroids:
chessboard complexes and matching complexes.  Chessboard complexes have been studied due to their applications in other geometric problems, such as the colorful versions of Tverberg's theorem \cites{Bjoerner1994, Vrecica2011}.

For a simplicial complex $K$, let $K^{(d)}$ be its $d$-skeleton.

\subsection{Parallel expansions of matching complexes}

Let $H$ be a finite graph or hypergraph.  We denote by
\[
        \operatorname{Match}(H)
\]
its matching complex: the vertices are the edges of $H$, and the faces
are pairwise disjoint collections of edges.  If $H$ is a graph with edge
set $W$ and $U\subseteq W$, we write $H[U]$ for the subgraph with edge
set $U$.

If $D=\{D_w:w\in U\}$ is a family of nonempty sets, let $H[U]^D$ be the
multi-graph or multi-hypergraph obtained from $H[U]$ by replacing each
edge $w\in W$ by a parallel class indexed by $D_w$.

\begin{lemma}
\label{lem:parallel-expansion-matching}
Let $H$ be a graph or hypergraph with edge set $W$, and let \(\mathcal{K}=\operatorname{Match}(H)\).
Then, for every $U\subseteq W$ and every family of nonempty sets
$D=\{D_w:w\in U\}$, there is a natural isomorphism
\[
        \mathcal{K}(U;D)\cong \operatorname{Match}(H[U]^D).
\]
If $\mathcal{K}=\operatorname{Match}(H)^{(d)}$ is the $d$-skeleton, then
\( \mathcal{K}(U;D)\cong \operatorname{Match}(H[U]^D)^{(d)}\).
\end{lemma}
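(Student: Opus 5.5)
The plan is to write down the obvious vertex map and check that it matches faces on both sides. A vertex of $\mathcal{K}(U;D)$ is a pair $(w,a)$ with $w\in U$ and $a\in D_w$; here $\{w\}\in\mathcal{K}[U]$ because singletons of edges are faces of $\operatorname{Match}(H)$. A vertex of $\operatorname{Match}(H[U]^D)$ is an edge of the multi-(hyper)graph $H[U]^D$, namely the copy of $w$ indexed by $a\in D_w$. I will call this copy $w_a$; it has the same vertex set (endpoints) as $w$. So I take $\phi(w,a)=w_a$. It is a bijection on vertex sets, since both are $\bigsqcup_{w\in U}D_w$.

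Next I check that $\phi$ and $\phi^{-1}$ send faces to faces. Take a set $\eta=\{(w_1,a_1),\dots,(w_m,a_m)\}$. By the definition of $\mathcal{K}(U;D)$ as a union of joins, $\eta$ is a face exactly when the $w_i$ are pairwise distinct and $\src(\eta)=\{w_1,\dots,w_m\}\in\mathcal{K}[U]$. Since $\src(\eta)\subseteq U$ and $\mathcal{K}$ is closed under subsets, $\mathcal{K}[U]$ is just the set of faces of $\mathcal{K}$ contained in $U$. So the condition says the $w_i$ are distinct, pairwise disjoint edges of $H[U]$.

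On the other side, $\{(w_1)_{a_1},\dots,(w_m)_{a_m}\}$ is a face of $\operatorname{Match}(H[U]^D)$ exactly when these edges are pairwise vertex-disjoint. The key observation is that two parallel copies $w_a,w_b$ with $a\neq b$ share all their endpoints, so they are never disjoint; edges are nonempty, which is also what makes $\{w\}$ a vertex of the matching complex. Hence a matching in $H[U]^D$ uses at most one copy of each $w$, so the $w_i$ are distinct. Copies of distinct edges are disjoint exactly when the original edges are. The two face conditions therefore coincide, and $\phi$ is a simplicial isomorphism. It is natural because it is the identity on labels $(w,a)$.

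For the skeleton statement, $\mathcal{K}=\operatorname{Match}(H)^{(d)}$ only adds the restriction $|\src(\eta)|\le d+1$. Since $|\eta|=|\src(\eta)|$ on both sides, the same isomorphism restricts to $\mathcal{K}(U;D)\cong\operatorname{Match}(H[U]^D)^{(d)}$. There is no real obstacle here. The only points that need care are the exclusion of two parallel copies from a common face, and the identification of $\mathcal{K}[U]$ with the faces of $\mathcal{K}$ lying inside $U$.
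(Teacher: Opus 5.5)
Your proposal is correct and follows the same route as the paper. The paper's proof is the one-line observation that a face of $\mathcal{K}(U;D)$ is a set of pairs $(w,\alpha)$ that uses each edge at most once and whose underlying edges form a matching in $H[U]$, which is exactly a matching in $H[U]^D$; you spell out the details it leaves implicit, namely that $\mathcal{K}[U]$ is the set of faces of $\mathcal{K}$ inside $U$, that nonempty parallel copies are never disjoint, and that $|\eta|=|\src(\eta)|$ in the skeleton case.
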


\begin{proof}
A face of $\mathcal{K}(U;D)$ is a set of vertices $(w,\alpha)$, with
$\alpha\in D_w$, such that no edge $w$ is used twice and the set of
edges is a matching in $H[U]$.  This is exactly a matching in the
parallel expansion $H[U]^D$.
\end{proof}

The chessboard complex $\Delta_{a,b}$ is a simplicial complex where the vertices are the squares of an $a \times b$ grid and the faces are the sets of squares that do not repeat rows nor columns.  In other words, they correspond to non-taking placements of rooks.  Chessboard complexes are a special case of matching complexes, as ${\Delta_{a,b} = \operatorname{Match}(K_{a,b})}$, where $K_{a,b}$ denotes the complete bipartite graph with components of size $a$ and $b$.

The connectivity of chessboard complexes was found by Bj\"orner, Lov\'asz, Vre\'cica, and \v{Z}ivaljevi\'c \cite{Bjoerner1994}.  Our main technical lemma in this section is that the connectivity bound by Bj\"orner et al. is stable under arbitrary addition of extra edges, including
parallel edges.

For $a,b\geq 1$, consider
\[
        \nu(a,b)
        =
        \min\left\{
            a,\,
            b,\,
            \left\lfloor \frac{a+b+1}{3}\right\rfloor
        \right\}.
\]

\begin{lemma}[Bj\"orner, Lov\'asz, Vre\'cica, \v{Z}ivaljevi\'c 1994 \cite{Bjoerner1994}]\label{lem:chessboard-connected}
    Let $a,b$ be positive integers.  The chessboard complex $\Delta_{a,b}$ is
$(\nu(a,b)-2)$-connected.
\end{lemma}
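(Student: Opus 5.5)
We prove that $\Delta_{a,b}$ is $(\nu(a,b)-2)$-connected by induction on $a+b$, using the standard gluing (nerve/union) argument for chessboard complexes. Write $\nu=\nu(a,b)$. The statement is vacuous when $\nu\le 0$. When $\nu=1$ we only need non-emptiness, which holds since $a,b\ge 1$. So assume $\nu\ge 2$; in particular $a,b\ge 2$. By the symmetry $\Delta_{a,b}\cong\Delta_{b,a}$ we may also assume $a\le b$.

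\textbf{Covering.} Fix the last row $a$. Cover $\Delta_{a,b}$ by the subcomplexes
\[
K_j=\St(a,j), \qquad j=1,\dots,b,
\]
the closed stars of the squares $(a,j)$ in the last row, together with $K_0=\Delta_{a-1,b}$, the faces avoiding row $a$. Every face either avoids row $a$ or contains exactly one square $(a,j)$, so this is a cover. Each $K_j$ with $j\ge1$ is a cone, hence contractible.

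\textbf{Intersections.} For distinct $j_1,\dots,j_t\ge 1$:
\[
K_0\cap K_{j_1}\cap\cdots\cap K_{j_t}\cong \Delta_{a-1,b-t},
\]
namely the placements avoiding row $a$ and columns $j_1,\dots,j_t$. Without $K_0$, the intersection $K_{j_1}\cap\cdots\cap K_{j_t}$ is again $\Delta_{a-1,b-t}$ for $t\ge2$, since two squares in the same row cannot coexist in a face.

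\textbf{Nerve/gluing lemma.} Apply the nerve lemma in its connectivity form: if every nonempty intersection of $t$ members of the cover is $(k-t+1)$-connected, then the union is $k$-connected if and only if the nerve is, provided the nerve is complete enough. The cleanest route is to view $\Delta_{a,b}$ as $K_0$ with $b$ cones attached along the subcomplexes $\Delta_{a-1,b-1}$. Attaching a cone along a $(k-1)$-connected subcomplex preserves $k$-connectivity, via Mayer--Vietoris and Seifert--van Kampen (or the gluing lemma). So it suffices to check two things.

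\begin{enumerate}
\item[(i)] $\Delta_{a-1,b}$ is $(\nu-2)$-connected.
\item[(ii)] The attaching complexes are $(\nu-3)$-connected. The cones must be attached one at a time, and at each step the attaching region is $\Delta_{a-1,b-1}$ together with the already-attached cones. This is where the inductive bookkeeping lies.
\end{enumerate}

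\textbf{Main obstacle.} The main obstacle is the arithmetic of $\nu$. Going from $(a,b)$ to $(a-1,b)$ need not keep $\nu$ fixed: when $a\le b$ and $\nu=a$, removing a row drops $\nu$ by one. For this reason we will instead delete a row when $a\ge b$ (after using symmetry to arrange this) and rely on the floor term $\lfloor (a+b+1)/3\rfloor$, which decreases by at most one per deleted row or column. Then we check $\nu(a-1,b-1)\ge\nu(a,b)-1$ and $\nu(a-1,b)\ge\nu(a,b)-1$ with the appropriate case split. Concretely:
\begin{enumerate}
\item[(a)] If $\nu=\lfloor(a+b+1)/3\rfloor<\min(a,b)$, then $a+b+1\le 3\min(a,b)+2$, so removing one row keeps $\nu(a-1,b)\ge\nu-1$ and $\nu(a-1,b-1)\ge\nu-1$ (the floor drops by at most one).
\item[(b)] If $\nu=\min(a,b)=b\le a$, then deleting a row keeps $\nu(a-1,b)\ge \nu$ whenever $a-1\ge b$ and $(a+b)/3\ge b$-ish. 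The borderline cases $a=b$ or $a+b+1<3b+3$ are handled by directly comparing the three terms.
\end{enumerate}
Since this case analysis is delicate, I would instead rely on the more robust Björner et al.\ argument: use the poset/nerve over the $b$ stars, with the nerve being a simplex, to conclude $(\nu-2)$-connectivity from the inductive connectivity of the $\Delta_{a-1,b-t}$. This requires
\[
\nu(a-1,b-t)-2\ \ge\ \nu(a,b)-2-(t-1)-1,
\]
that is, the connectivity loss is at most $t$ when removing one row and $t$ columns. This holds because $\lfloor(a+b+1)/3\rfloor$ drops by at most $\lceil (t+1)/3\rceil\le t$, and $\min(a-1,b-t)\ge\min(a,b)-t$ when row deletion is chosen on the longer side.
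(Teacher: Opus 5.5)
Your proposal never settles on an argument that works. The cone-attachment route fails at its first requirement. With $K_0=\Delta_{a-1,b}$ as a member of the cover (or as the base to which the cones are glued), you need $\Delta_{a-1,b}$ itself to be $(\nu-2)$-connected. That is false in general: for $(a,b)=(2,3)$ one has $\nu=2$, but $\Delta_{1,3}$ is three points.

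Switching to $a\ge b$ does not help. For $(a,b)=(3,2)$ one has $\nu=2$, but $\Delta_{2,2}$ is two disjoint edges. Moreover, when $a>b$ the member $K_0$ cannot be discarded, since a face avoiding row $a$ can occupy all $b$ columns. Case (b) is left open, and the fallback route is misstated in two places:
\begin{enumerate}
\item The nerve of the $b$ stars is not a simplex but $\partial\Delta^{b-1}\cong S^{b-2}$, because all $b$ stars meet in $\Delta_{a-1,0}=\emptyset$. So you must also check $\nu-2\le b-3$.
\item More seriously, the nerve lemma needs every nonempty $t$-fold intersection to be $(k-t+1)$-connected. With $k=\nu-2$ you therefore need $\nu(a-1,b-t)\ge\nu-t+1$, not $\nu-t$.
\end{enumerate}
Your weaker hypothesis is genuinely insufficient. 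A nerve lemma requiring $t$-fold intersections to be only $(k-t)$-connected would show that $\Delta_{3,4}$ is $1$-connected: two stars of row $3$ meet in the nonempty $\Delta_{2,2}$, triple intersections impose nothing, and the nerve $\partial\Delta^3$ is simply connected. But $\tilde\chi(\Delta_{3,4})=-1+12-36+24=-1$ forces $\tilde H_1(\Delta_{3,4})\neq 0$.

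The missing observation is that, for $a\le b$, the $b$ stars $\St(a,j)$ already cover $\Delta_{a,b}$. A face avoiding row $a$ has at most $a-1<b$ squares, so some column $j$ is free and the face lies in $\St(a,j)$. So drop $K_0$. Suppose $\nu\ge 2$, hence $2\le a\le b$. Then:
\begin{enumerate}
\item each member of the cover is a cone;
\item any $t\ge 2$ members meet in $\Delta_{a-1,b-t}$, which is nonempty exactly for $t\le b-1$;
\item the nerve $\partial\Delta^{b-1}$ is $(b-3)$-connected, which is enough because $\nu\le\lfloor(2b+1)/3\rfloor\le b-1$.
\end{enumerate}
By induction it remains to verify $\nu(a-1,b-t)\ge\nu-t+1$ for $2\le t\le b-1$. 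Check it term by term:
\begin{enumerate}
\item $a-1\ge\nu-1\ge\nu-t+1$;
\item $b-t\ge\nu-t+1$, since $\nu\le b-1$;
\item $\lfloor(a+b-t)/3\rfloor\ge\lfloor(a+b+1)/3\rfloor-\lceil(t+1)/3\rceil\ge\nu-t+1$, since $\lceil(t+1)/3\rceil\le t-1$ for $t\ge 2$.
\end{enumerate}
The case $t=2$, where deleting one row and two columns may cost only one degree of connectivity, is what forces the denominator $3$ in $\nu(a,b)$. The paper itself does not reprove this lemma; it quotes it from Bj\"orner et al.
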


%----

For our construction, we will use a special kind of subsets of $\Delta_{M,N}$, which we call $(a,b,d)$-stairs.

\begin{definition}
    Let $(a,b,d)$ be integers such that $\nu(a,b) \ge d+1$, $a \le 2d+1$, and $b \le 2d+1$.  The $(a,b,d+1)$-stair is the set of squares $(i,j) \in [a]\times[b]$ such that
    \[
    a-(2d+1) \le j-i \le (2d+1)-b.
    \]
\end{definition}

We show two examples of $(a,b,d)$-stairs in \cref{fig:stairs}.  Notice that the facets of $\Delta_{2,3}$ form a graph, a cycle of length $6$.  The facets of the $(2,3,2)$-stair form a path of length $4$.  This will be relevant in \cref{sec:remarks}.

\begin{figure}
    \centering
    \includegraphics[width=0.8\linewidth]{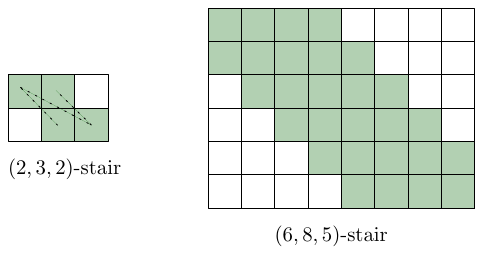}
    \caption{An example of the $(2,3,2)$-stair and the $(6,8,5)$-stair.  Notice that the chessboard complex induced by the squares of the $(2,3,2)$-stair is a path with four vertices, shown by the dotted lines.}
    \label{fig:stairs}
\end{figure}

Ziegler proved that if a subcomplex of a chessboard complex contains an isomorphic copy of an $(a,b,d)$-stair, then its $d$-skeleton is vertex-decomposable \cite{Ziegler1994}.

Recall that we can define vertex-decomposability is defined recursively as follows.  A simplicial complex $K$ is vertex-decomposable if there is a vertex $v$ (called a \textit{shedding vertex}) such that:
\begin{itemize}
    \item $K \setminus v$ is vertex-decomposable,
    \item the link $\operatorname{lk}(v)$ is vertex-decomposable, and
    \item no facet of $\operatorname{lk}(v)$ is a facet of $K\setminus v$.
\end{itemize}

To use this in the context of selection structures, we need the following lemma.

\begin{lemma}\label{lem:vertex-decomposable}
    Let $\mathcal{K}$ be a vertex-decomposable simplicial complex of dimension $d$.  Then every finite parallel expansion of $\mathcal{K}$ is again a pure $d$-dimensional vertex-decomposable complex.  In particular, it is $(d-1)$-connected.
\end{lemma}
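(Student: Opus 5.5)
We argue by induction on the number of vertices of $\mathcal{K}$, treating a parallel expansion as a sequence of single-vertex duplications. Write $\mathcal{K}^{D}$ for the complex obtained by replacing each vertex $w$ by a fiber $D_w$. A simplex of $\mathcal{K}^D$ is a set that uses at most one element of each fiber and whose set of sources is a face of $\mathcal{K}$. Since the fibers are finite, it suffices to prove the following claim. If $\mathcal{K}$ is vertex-decomposable and pure of dimension $d$, then the complex $\mathcal{K}'$ obtained by adding one new vertex $v'$ parallel to an existing vertex $v$ is again vertex-decomposable and pure of dimension $d$. Here the faces of $\mathcal{K}'$ are the faces of $\mathcal{K}$ together with $(\sigma\setminus\{v\})\cup\{v'\}$ for every $\sigma\ni v$. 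Iterating this claim reaches every finite parallel expansion.

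Before that, we must address purity. The statement only assumes dimension $d$. I intend to use the standard fact, due to Provan and Billera in the pure case and Björner and Wachs in general, that vertex-decomposable complexes are shellable. In the intended application, Ziegler's theorem yields a $d$-skeleton that is pure. I would therefore add purity as a hypothesis, or check that it holds in the application. Purity is clearly preserved by parallel expansion, because every facet of $\mathcal{K}^D$ has the form $\bigast_{w\in F}\{a_w\}$ with $F$ a facet of $\mathcal{K}$.

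For the single-duplication step we take $v'$ as the shedding vertex of $\mathcal{K}'$. We have $\mathcal{K}'\setminus v'=\mathcal{K}$, which is vertex-decomposable by hypothesis. We also have $\operatorname{lk}_{\mathcal{K}'}(v')\cong \operatorname{lk}_{\mathcal{K}}(v)$, because the faces containing $v'$ mirror those containing $v$ and $v'$ is never together with $v$. The link of a vertex in a vertex-decomposable complex is vertex-decomposable; this is a standard lemma, proved by induction along the shedding order. The main point to check is the facet condition: no facet of $\operatorname{lk}(v')$ may be a facet of $\mathcal{K}$. Take a facet $\tau$ of $\operatorname{lk}(v')$. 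Then $\tau\cup\{v\}$ is a face of $\mathcal{K}$ strictly containing $\tau$, so $\tau$ is not a facet of $\mathcal{K}$, and the condition holds. Purity of $\mathcal{K}'$ follows from the same mirroring, since every facet through $v'$ has size $d+1$.

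The final assertion follows from the chain of implications: pure, $d$-dimensional and vertex-decomposable implies shellable, which implies homotopy equivalent to a wedge of $d$-spheres, which implies $(d-1)$-connected. This last implication is the same fact already used in \cref{example:matroids}.

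The step I expect to be the main obstacle is justifying that links of vertex-decomposable complexes are vertex-decomposable. I would prove it directly by induction on the number of vertices. Let $x$ be the shedding vertex of $\mathcal{K}$ and take the link at $v\neq x$. Then $\operatorname{lk}(v)\setminus x=\operatorname{lk}_{\mathcal{K}\setminus x}(v)$ and $\operatorname{lk}_{\operatorname{lk} v}(x)=\operatorname{lk}_{\operatorname{lk} x}(v)$, and both are vertex-decomposable by induction. For the facet condition, I would use purity, which is where the pure hypothesis is again convenient. If purity fails, I would cite Björner and Wachs instead.
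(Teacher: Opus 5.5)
Your proposal is correct and follows the paper's proof: you adjoin parallel copies one at a time and take the new vertex $v'$ as the shedding vertex, using $\mathcal{K}'\setminus v'=\mathcal{K}$ and $\operatorname{lk}_{\mathcal{K}'}(v')=\operatorname{lk}_{\mathcal{K}}(v)$, which is vertex-decomposable by the standard link lemma that the paper simply cites. Two points in your version are worth keeping: your facet-condition check via $\tau\cup\{v\}\in\mathcal{K}$ avoids the paper's dimension count, and you are right that purity must be assumed, since the paper uses it without stating it when it says the facets of $\mathcal{K}$ are $d$-dimensional (it does hold in the chessboard application via Ziegler).
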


\begin{proof}
We add the additional parallel vertices one at a time.  Adding a new vertex $v'$ which is a copy of $v$ turns our complex $\mathcal{K}$ into
\[
\mathcal{K'} = \mathcal{K} \cup (v' * \operatorname{lk}_{\mathcal{K}}(v))
\]
Since in a vertex-decomposable simplicial complex the link of any face (in particular a vertex) is also vertex-decomposable \cites{Provan1980, Biermann2015}, we know that $\operatorname{lk}_{\mathcal{K}}(v)$ is vertex decomposable and pure $(d-1)$-dimensional.  We also have that the deletion of $v'$ in $\mathcal{K}'$ is $\mathcal{K}$.  Since the facets of $\mathcal{K}$ are $d$-dimensional and the facets of $\operatorname{lk}_{\mathcal{K}}(v)$ are $(d-1)$-dimensional, we have that $v'$ is a shedding vertex of $\mathcal{K}'$ as long as $\mathcal{K}$ was vertex decomposable.  If we proceed one vertex at a time, we obtain the desired conclusion.
\end{proof}

\subsection{Chessboard selection structures}

Let $M,N$ be positive integers and let \( W=[M]\times [N]\).
We identify $W$ with the edge set of $K_{M,N}$.  Recall that we call the elements of $W$ the sources.

\begin{theorem}[Chessboard selection structures]
\label{thm:chessboard-selection-structures}
Let $ M,N$ be positive integers such that \(\nu(M,N)\ge d+1\).  Consider
\begin{align*}
     \mathcal{K}&=\Delta_{M,N}^{(d)} \\
     \mathscr{L}^{\operatorname{ch}}_{M,N,d}
        &=
        \left\{
            U\subseteq [M]\times [N]:
            \substack{\mbox{There exist }a,b \mbox{ such that }\nu(a,b) \ge d+1 \mbox{ and } a \le 2d+1,\, b \le 2d+1\\
             \mbox{ and a subset }U' \subseteq U \mbox{that is isomorphic to the }(a,b,d+1)\mbox{-stair}
            }
        \right\}.
\end{align*}
Then
\( \Sigma^{\operatorname{ch}}_{M,N,d}
        =
        \bigl(\mathcal{K},\,\, \mathscr{L}^{\operatorname{ch}}_{M,N,d}\bigr) \)
is a $d$-admissible selection structure.
\end{theorem}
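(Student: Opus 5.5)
We have to check three things: that $\mathscr{L}^{\operatorname{ch}}_{M,N,d}$ is nonempty and upwards closed, that the Connectedness axiom holds, and that the Completion axiom holds.

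\emph{Nonemptiness and upward closure.} Upward closure is immediate, since the defining condition only asks for a subset $U'\subseteq U$ isomorphic to a stair. For nonemptiness, take $a=b=d+1$. Then $\nu(d+1,d+1)=\min\{d+1,\lfloor (2d+3)/3\rfloor\}$, which is not $\ge d+1$ in general, so this choice fails. Instead I would pick $a,b\le 2d+1$ with $a,b\ge d+1$ and $a+b\ge 3d+2$, for example $a=d+1$, $b=2d+1$. Since $\nu(M,N)\ge d+1$ forces $M,N\ge d+1$ and $M+N\ge 3d+2$, such a pair fits inside $[M]\times[N]$ (up to transposing), so $W$ itself contains a stair.

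\emph{Connectedness.} Fix $U\in\mathscr L$ and finite nonempty fibers $D$. Let $U'\subseteq U$ be the stair. By \cref{lem:parallel-expansion-matching}, $\mathcal K(U;D)\cong\operatorname{Match}(K_{M,N}[U]^D)^{(d)}$. This is the $d$-skeleton of the matching complex of a bipartite multigraph whose underlying simple graph contains the stair $U'$. I would first treat the parallel multiplicities as follows. The complex $\operatorname{Match}(K_{M,N}[U])^{(d)}$ is a subcomplex of $\Delta_{M,N}$ containing a copy of the stair, so by Ziegler's theorem \cite{Ziegler1994} its $d$-skeleton is vertex-decomposable. I also need it to be pure $d$-dimensional. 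Purity should follow because the stair already supports matchings of size $d+1$ through every square; that is the point of the stair, and Ziegler's result gives vertex-decomposable of dimension $d$. Then \cref{lem:vertex-decomposable} applied to $\operatorname{Match}(K_{M,N}[U])^{(d)}$ shows that its finite parallel expansion $\mathcal K(U;D)$ is pure $d$-dimensional vertex-decomposable, hence $(d-1)$-connected. One point needs care: the parallel expansion and the $d$-skeleton commute, which holds because a parallel expansion does not change face sizes.

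\emph{Completion.} The facets of $\mathcal K=\Delta^{(d)}_{M,N}$ are exactly the matchings of size $d+1$; this uses $\nu(M,N)\ge d+1$, so that every smaller matching extends. The faces of $\mathcal K[U]$ are the matchings in $U$ of size at most $d+1$. Purity of $\mathcal K[U]$ of dimension $d$, obtained in the previous step with trivial fibers, says that every matching in $U$ extends within $U$ to one of size $d+1$. Hence every facet of $\mathcal K[U]$ has size $d+1$ and is a facet of $\mathcal K$.

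\emph{Main obstacle.} The main obstacle is verifying that Ziegler's theorem, as stated, gives \emph{pure} $d$-dimensional vertex-decomposability for every subcomplex (induced by an edge set $U$) containing the stair, rather than just for the stair itself. If only the stair case is available, I would extend it by adding the extra squares of $U\setminus U'$ one at a time. At each step I would show that the new square is a shedding vertex, whose link is a smaller chessboard-type complex still containing a stair of one lower dimension, and argue by induction on $d$.
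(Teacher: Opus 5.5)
Your proposal is correct and follows the paper's proof: the paper likewise cites Ziegler for the claim that $\mathcal{K}[U]$ is pure $d$-dimensional and vertex-decomposable for every $U\in\mathscr{L}^{\operatorname{ch}}_{M,N,d}$, then uses the parallel-expansion lemma for vertex-decomposable complexes to get $(d-1)$-connectivity of $\mathcal{K}(U;D)$, and deduces completion from purity. The paper simply assumes that Ziegler's result covers every $U$ containing a stair, which is exactly the point you flag as the main obstacle. One small slip is in your nonemptiness check, which the paper does not carry out: the sample pair $a=d+1$, $b=2d+1$ need not fit in the board (e.g.\ $d=2$, $M=N=4$), but $a=\min\{M,2d+1\}$, $b=\min\{N,2d+1\}$ always fits and satisfies $\nu(a,b)\ge d+1$.
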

We can think of $\mathcal{K}$ as the placements of at most $d+1$ non-taking rooks.  We also have  $U\in\mathscr{L}^{\operatorname{ch}}_{M,N,d}$ if and only if, after some permutation of rows and columns, the selected squares contain
the $(a,b,d+1)$-stair.  The family $\mathscr{L}^{\operatorname{ch}}_{M,N,d}$ is
upward closed.

\begin{proof}
Let $U\in\mathscr{L}^{\operatorname{ch}}_{M,N,d}$, $W = [M] \times [N]$, and let
$D=\{D_w:w\in U\}$ be a family of nonempty finite sets. 

By Ziegler's results \cite{Ziegler1994}, for each $U \in  \mathscr{L}^{\operatorname{ch}}_{M,N,d}$ the complex $\mathcal{K}[U]$ is pure $d$-dimensional and vertex-decomposable.  Then, by \cref{lem:vertex-decomposable}, the complex $\mathcal{K}(U;D)$ is $(d-1)$-connected.

Finally, since $\mathcal{K}[U]$ is pure $d$-dimensional, its facets have $d+1$ vertices, which means they are facets of $\mathcal{K}$. 
\end{proof}

To exemplify the kind of results that these selection structures imply, consider the direct application to colorful Carath\'eodory, \cref{thm:selection-structure-colorful-caratheodory}.

\begin{corollary}[Chessboard Carath\'eodory]\label{cor:chessboard-caratheodory}
    Let $M,N,d$ be positive integers such that $\nu(M,N) \ge d+1$ and $W = [M] \times [N]$.  For each $w = (u,v) \in W$, let $x_w$ be a point in $\rr^d$.

    We know that each set $W' \subseteq W$ that intersects every subset of $W$ isomorphic to some $(a,b,d+1)$-stair for some $a,b \le 2d+1$ with $\nu(a,b) \ge d+1$, we have $0 \in \conv \{x_w : w\in W'\}$.  Then, we can find $d+1$ points in the set $\{x_w: w \in W\}$ that correspond to a non-taking placement of rooks and such that their convex hull contains the origin.
\end{corollary}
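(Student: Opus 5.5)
This corollary is a direct specialization of \cref{thm:selection-structure-colorful-caratheodory} to the chessboard selection structure $\Sigma^{\operatorname{ch}}_{M,N,d}$ of \cref{thm:chessboard-selection-structures}. We take $W=[M]\times[N]$ and set $X_w=\{x_w\}$ for each $w\in W$, which is a nonempty finite set. Since $\nu(M,N)\ge d+1$, \cref{thm:chessboard-selection-structures} shows that $\Sigma^{\operatorname{ch}}_{M,N,d}=(\Delta_{M,N}^{(d)},\mathscr{L}^{\operatorname{ch}}_{M,N,d})$ is $d$-admissible. It therefore remains to translate the hypothesis of the corollary into the hypothesis of the selection-structure Carath\'eodory theorem.

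\textbf{Translating the hypothesis.} Take $W'\subseteq W$ with $W\setminus W'\notin\mathscr{L}^{\operatorname{ch}}_{M,N,d}$. By the definition of $\mathscr{L}^{\operatorname{ch}}_{M,N,d}$, this means that $W\setminus W'$ contains no subset isomorphic to an $(a,b,d+1)$-stair with $a,b\le 2d+1$ and $\nu(a,b)\ge d+1$. Equivalently, $W'$ meets every such copy of a stair inside $W$. Here ``isomorphic'' means equal to the image of the stair under an injection of rows and an injection of columns, matching the phrase ``after some permutation of rows and columns'' used for $\mathscr{L}^{\operatorname{ch}}$. The hypothesis of the corollary then gives $0\in\conv\{x_w:w\in W'\}=\conv\bigl(\bigcup_{w\in W'}X_w\bigr)$, which is exactly what \cref{thm:selection-structure-colorful-caratheodory} requires.

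\textbf{Interpreting the conclusion.} \cref{thm:selection-structure-colorful-caratheodory} yields a face $I\in\Delta_{M,N}^{(d)}$ with $0\in\conv\{x_w:w\in I\}$. Such an $I$ has at most $d+1$ squares and no two of them share a row or a column, so it is a non-taking placement of rooks. If $|I|<d+1$, we extend $I$ to a facet of $\Delta_{M,N}^{(d)}$ with exactly $d+1$ squares; this is possible because $\nu(M,N)\ge d+1$ forces $M,N\ge d+1$. The extended set is still a non-taking placement, and its convex hull contains the origin because it contains the convex hull of $\{x_w:w\in I\}$. This gives the $d+1$ points claimed.

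\textbf{Main point to check.} The only step needing care is the equivalence between ``$W\setminus W'\notin\mathscr{L}^{\operatorname{ch}}_{M,N,d}$'' and ``$W'$ intersects every stair copy''. This follows directly once one uses that $\mathscr{L}^{\operatorname{ch}}_{M,N,d}$ is upward closed and is determined by containing some stair copy. Beyond this, the argument is just bookkeeping.
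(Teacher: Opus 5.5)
Your proposal is correct and is the paper's proof: apply \cref{thm:selection-structure-colorful-caratheodory} with $X_w=\{x_w\}$ to the chessboard selection structure of \cref{thm:chessboard-selection-structures}. Two details the paper leaves implicit are also handled correctly: the translation of the hypothesis, and padding $I$ to exactly $d+1$ non-attacking rooks, which works because $\nu(M,N)\ge d+1$ forces $M,N\ge d+1$.
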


\begin{proof}
    Apply \cref{thm:selection-structure-colorful-caratheodory} with $X_w = \{x_w\}$ for all $w \in W$ and $\Sigma$ the selection structure from \cref{thm:chessboard-selection-structures}.
\end{proof}

\subsection{Matching-complex selection structures}

There is an analogous construction for matching complexes of complete
uniform hypergraphs.  Let $K_N^k$ be the complete $k$-uniform hypergraph
on $N$ vertices.  Its matching complex is denoted \( \operatorname{Match}(K_N^k)\).
For $n\geq k$, put
\[
        \nu_k(n)
        =
        \left\lfloor
            \frac{n+2k-3}{2k-1}
        \right\rfloor .
\]
Again, Björner et al. computed the connectivity of matching complexes and showed that
$\operatorname{Match}(K_n^k)$ is $(\nu_k(n)-2)$-connected.

\begin{lemma}[Bj\"orner, Lov\'asz, Vre\'cica, \v{Z}ivaljevi\'c 1994 \cite{Bjoerner1994}]\label{lem:matching-connected}
    Let $n \ge k$ be positive integers.  Then, the complex $\operatorname{Match}(K_n^k)$ is $(\nu_k(n)-2)$-connected.
\end{lemma}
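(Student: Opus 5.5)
We plan to prove the statement by induction on $n$, following the inductive gluing scheme used by Bj\"orner, Lov\'asz, Vre\'cica, and \v{Z}ivaljevi\'c for chessboard complexes. Write $K_n=\operatorname{Match}(K_n^k)$ and $\nu=\nu_k(n)$. When $\nu\le 1$ there is nothing to prove, since $(-1)$-connected only means nonempty. When $\nu=2$ we have $n\ge 2k+1$, and we must show $K_n$ is connected. This holds because any two $k$-subsets $e,f$ of $[n]$ admit a third $k$-subset $g$ disjoint from both when $n\ge 3k$; when $n$ is smaller we instead join $e$ and $f$ through a short chain of $k$-sets, each disjoint from the previous one, and this uses $n\ge 2k+1$.

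For the inductive step we fix the ground vertex $1\in[n]$. Let $K^0\subseteq K_n$ be the subcomplex of matchings that do not cover $1$, so $K^0\cong K_{n-1}$. Every face of $K_n$ uses at most one hyperedge $e\ni 1$. Hence
\[
K_n \;=\; K^0\cup\bigcup_{e\ni 1}\bigl(e * \operatorname{lk}(e)\bigr),\qquad \operatorname{lk}(e)\cong K_{n-k}.
\]
Two distinct cones $e*\operatorname{lk}(e)$ and $e'*\operatorname{lk}(e')$ meet only inside $K^0$, because $e\cap e'\ni 1$. Each cone meets $K^0$ exactly in $\operatorname{lk}(e)$. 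So $K_n$ is obtained from $K^0$ by attaching cones along copies of $K_{n-k}$. Attaching a cone along a subcomplex $L$ is, up to homotopy, attaching the suspension-type cell structure of $L$. Therefore, if $K^0$ is $m$-connected and every $L$ is $(m-1)$-connected, the result is $m$-connected. This follows from the Mayer--Vietoris and van Kampen theorems together with Hurewicz, or from the gluing lemma for connectivity. By induction, $K_{n-k}$ is $(\nu_k(n-k)-2)$-connected. Since $\nu_k(n-k)\ge \nu-1$ (subtracting $k<2k-1$ lowers the floor by at most one), the links have exactly the connectivity needed for $m=\nu-2$.

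The main obstacle is the base piece $K^0\cong K_{n-1}$. We have $\nu_k(n-1)=\nu$ unless $n+2k-3\equiv 0\pmod{2k-1}$, and in that boundary case the induction only gives $(\nu-3)$-connectivity of $K^0$. To handle it, we would replace the single-vertex deletion by a sphere-pushing argument. Take a simplicial map $f\colon S^j\to K_n$ with $j\le\nu-2$. Its image touches at most $(j+1)$ hyperedges per simplex, so at most $k(\nu-1)$ ground vertices per simplex. We then use the defining inequality $n\ge(2k-1)\nu-2k+3$ to find, for each simplex, enough free ground vertices to cone it off. This requires a cone-off step that straightens $f$ into $K^0$ relative to the stars of the hyperedges through $1$, using the highly connected links $\operatorname{lk}(e)$. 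Once $f$ is homotoped into the subcomplex spanned by matchings avoiding a well-chosen vertex, we apply the first case to that subcomplex with a different choice of deleted vertex. The bookkeeping of this free-vertex count is the step we expect to require the most care; it is exactly where the constant $2k-1$ in $\nu_k$ originates.
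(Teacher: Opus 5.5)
The paper does not prove this lemma; it imports it from Bj\"orner--Lov\'asz--Vre\'cica--\v{Z}ivaljevi\'c. Your argument therefore has to stand on its own, and it has a genuine gap.

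Your decomposition $K_n=K^0\cup\bigcup_{e\ni 1}(e*\operatorname{lk}(e))$ is correct. It is the same cone-attachment mechanism the paper uses in \cref{lem:robust-hypergraph-connectivity}. But it only yields $\operatorname{conn}(K_n)\ge\min\{\operatorname{conn}(K_{n-1}),\operatorname{conn}(K_{n-k})+1\}$, and this can never give $K_n$ a better bound than the one already known for $K_{n-1}$. Starting from your base cases, it proves only $0$-connectivity. Every increase of $\nu_k$ happens at a threshold $n\equiv 2\pmod{2k-1}$, which is what you call the boundary case. So the boundary case is the whole theorem, and there you offer only a sketch.

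The sketch does not work as stated. Deforming a sphere $f\colon S^j\to K_n$ with $j\le\nu-2$ into matchings that avoid some ground vertex carries no information. Since every $\operatorname{lk}(e)$ is $(\nu-3)$-connected, the pair $(K_n,K^0)$ is $(\nu-2)$-connected, so such a deformation is always possible. The target of the deformation is a copy of $K_{n-1}$, and at a threshold this copy is genuinely not $(\nu-2)$-connected. For instance, with $k=2$ and $n=8$ you must show that $\operatorname{Match}(K_8)$ is simply connected, while $\tilde H_1(\operatorname{Match}(K_7);\zz)\cong\zz/3$. So no argument carried out inside a copy of $K_{n-1}$, whatever vertex is deleted, can produce the null-homotopy. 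You must show that the cones over the links jointly kill $\pi_{\nu-2}(K^0)$, and that depends on how different cones overlap. Your per-simplex count of free ground vertices only leads to a condition like $n\ge k\nu$; it is not the source of $2k-1$.

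The missing idea is a nerve-lemma argument, which is the standard route and essentially the BLVZ one.
\begin{enumerate}
\item Restrict to the $(\nu-1)$-skeleton and cover it by the closed stars $\operatorname{St}(e)$, $e\ni 1$. These are cones. When $\nu\ge 3$ we have $n\ge 4k$, and then the stars cover every matching with at most $\nu$ hyperedges.
\item For $t\ge 2$ distinct $e_1,\dots,e_t\ni 1$, the intersection is the matching complex on $[n]\setminus(e_1\cup\dots\cup e_t)$. This set has at least $n-1-t(k-1)\ge n-(t-1)(2k-1)$ ground vertices, so the intersection is $(\nu-t-1)$-connected by induction.
\item The nerve contains the full $(\nu-1)$-skeleton of a simplex, so Bj\"orner's nerve lemma gives $(\nu-2)$-connectivity.
\end{enumerate}
The case $t=2$, where two hyperedges through the vertex $1$ cover $2k-1$ ground vertices, is exactly where the constant $2k-1$ in $\nu_k$ comes from.
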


\begin{lemma}
\label{lem:robust-hypergraph-connectivity}
Let $H$ be a finite $k$-uniform multi-hypergraph.  Suppose that $H$
contains a copy of $K_a^k$.  Then \( \operatorname{Match}(H)\)
is $(\nu_k(a)-2)$-connected.
\end{lemma}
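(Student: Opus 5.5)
Let $c=\nu_k(a)-2$. The plan is to reduce to \cref{lem:matching-connected} by adding edges of $H$ to a copy of $K_a^k$ one at a time, and to show that each addition preserves $c$-connectivity. Fix a sub-multi-hypergraph $H_0\subseteq H$ isomorphic to $K_a^k$, and order the remaining edges $e_1,\dots,e_m$ (parallel copies and edges meeting vertices outside $H_0$ included). Put $H_i=H_0\cup\{e_1,\dots,e_i\}$. By \cref{lem:matching-connected}, $\operatorname{Match}(H_0)$ is $c$-connected. For the inductive step we use
\[
\operatorname{Match}(H_i)=\operatorname{Match}(H_{i-1})\cup \bigl(e_i * \operatorname{Match}(H_{i-1}-N(e_i))\bigr),
\]
where $H_{i-1}-N(e_i)$ is the multi-hypergraph of edges of $H_{i-1}$ disjoint from $e_i$ as vertex sets. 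The two pieces meet in $\operatorname{lk}(e_i)=\operatorname{Match}(H_{i-1}-N(e_i))$, and the cone is contractible. By the gluing lemma (Mayer--Vietoris together with van Kampen), $\operatorname{Match}(H_i)$ is $c$-connected whenever $\operatorname{Match}(H_{i-1})$ is $c$-connected and the link is $(c-1)$-connected.

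So the key step is a lower bound on the connectivity of the link. The hypergraph $H_{i-1}-N(e_i)$ contains the complete hypergraph on $V(H_0)\setminus e_i$, which has at least $a-k$ vertices. Hence it contains a copy of $K_{a'}^k$ with $a'\ge a-k$. We want to apply the statement itself, by induction on $a$ (or on the total size), to get that the link is $(\nu_k(a-k)-2)$-connected. We then need $\nu_k(a-k)-2\ge c-1$, that is, $\nu_k(a-k)\ge \nu_k(a)-1$. Since $k\le 2k-1$, we have
\[
\nu_k(a-k)=\left\lfloor \frac{a-k+2k-3}{2k-1}\right\rfloor\ge \left\lfloor \frac{a+2k-3}{2k-1}-1\right\rfloor=\nu_k(a)-1,
\]
so the inequality holds. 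The induction is therefore on $a$, the size of the complete subhypergraph, and within a fixed $a$ on the number $m$ of extra edges. The base cases are when $\nu_k(a)\le 1$, where the claim is vacuous or only requires nonemptiness, and when $a<k$ in the link, which happens only when $\nu_k$ is small enough that the needed bound is vacuous. In these cases we use the convention that $(-1)$-connected means nonempty and $(-2)$-connected is vacuous.

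The main obstacle is the case where $e_i$ meets $V(H_0)$ in fewer than $k$ vertices, or not at all. Then the link contains the complete hypergraph on more than $a-k$ vertices of $H_0$, so it is even better off. The edge case to watch is a parallel copy of an edge of $H_0$ itself, which removes exactly $k$ vertices; the computation above already covers it. One also needs care with small $c$ (namely $c=-1$ or $0$), where the gluing lemma should be checked directly: nonemptiness is immediate, and path-connectedness follows because the link is nonempty when $\nu_k(a)\ge 2$. That nonemptiness holds since $a\ge 2k$ gives at least one edge in $K_{a-k}^k$. Note that this argument also recovers the parallel-expansion statement needed for the matching selection structures via \cref{lem:parallel-expansion-matching}.
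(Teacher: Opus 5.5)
Your proposal is correct and follows the paper's iterative cone-attachment argument. You add the remaining hyperedges to a fixed copy of $K_a^k$ one at a time, observe that each attaching link contains a complete $k$-uniform hypergraph on at least $a-k$ vertices, and combine $\nu_k(a-k)\ge\nu_k(a)-1$ with the gluing lemma. Your handling of the induction scheme and the small cases ($a<2k$, and $c\le 0$) is more explicit than the paper's, and using $k\le 2k-1$ instead of the paper's strict inequality also covers $k=1$.
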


\begin{proof}
The proof is based on an iterative cone-attachment argument.  Start with a fixed
copy of $K_a^k$, whose matching complex is $(\nu_k(a)-2)$-connected by
\cite{Bjoerner1994}.  Add all remaining hyperedges one at a time.

When a new hyperedge $e$ is added to $H_0$, we have
\[
        \operatorname{Match}(H_0\cup\{e\})
        =
        \operatorname{Match}(H_0)
        \cup
        \left(e*\operatorname{Match}(H_0-V(e))\right),
\]
and the intersection is $\operatorname{Match}(H_0-V(e))$.  If $e$ meets
the copy of $K^k_a$ in $s$ vertices, where
$0\leq s\leq k$, then $H_0-V(e)$ contains a copy of
$K_{a-s}^k$.  Since $s\leq k<2k-1$, we have
\(\nu_k(a-s)\geq \nu_k(a)-1\).

By induction, the attaching subcomplex is at least
$(\nu_k(a)-3)$-connected.  Attaching a cone along such a subcomplex
preserves $(\nu_k(a)-2)$-connectivity.
\end{proof}

Now we are ready to define selection structures for complete hypergraphs.

\begin{theorem}[Matching selection structures]
\label{thm:matching-selection-structures}
Let $N,a,k,d$ be positive integers such that
\(\nu_k(a)\geq d+1\) and \(N \ge a\geq k(d+1)\).
\begin{align*}
    W&=E(K_N^k) \\
    \mathcal{K}&=\operatorname{Match}(K_N^k)^{(d)} \\
    \mathscr{L}_a
        & =
        \left\{
            U\subseteq W:
            \text{there is an }a\text{-element set }A\subseteq [N]
            \text{ such that }E(K_A^k)\subseteq U
        \right\}.
\end{align*}
Then
\(
        \Sigma=
            \bigl( \mathcal{K},
            \mathscr{L}_a
        \bigr)
\)
is a $d$-admissible selection structure.
\end{theorem}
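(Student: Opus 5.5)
We must verify three things for $\Sigma=(\mathcal{K},\mathscr{L}_a)$: that $\mathscr{L}_a$ is nonempty and upwards closed, the connectedness axiom, and the completion axiom. Nonemptiness is immediate: since $N\ge a$ we may pick any $a$-element $A\subseteq[N]$, and then $W\supseteq E(K_A^k)$, so $W\in\mathscr{L}_a$. Upward closure holds because the defining condition $E(K_A^k)\subseteq U$ is monotone in $U$.

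For connectedness, fix $U\in\mathscr{L}_a$ with witness $A$, and fix nonempty finite fibers $D$. By \cref{lem:parallel-expansion-matching}, applied to the skeleton version,
\[
\mathcal{K}(U;D)\cong \operatorname{Match}(H[U]^D)^{(d)} .
\]
Here $H[U]^D$ is a finite $k$-uniform multi-hypergraph. It contains a copy of $K_a^k$, obtained by choosing one parallel copy of each edge of $E(K_A^k)\subseteq U$. By \cref{lem:robust-hypergraph-connectivity}, $\operatorname{Match}(H[U]^D)$ is $(\nu_k(a)-2)$-connected, and $\nu_k(a)-2\ge d-1$. The $(d-1)$-connectivity of a complex depends only on its $d$-skeleton: the $d$-skeleton includes the full $(d-1)$-skeleton, and $\pi_i$ for $i\le d-1$ is determined by the $(i+1)$-skeleton, by cellular approximation. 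Hence the $d$-skeleton is also $(d-1)$-connected, which gives the connectedness axiom.

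For completion, fix $U\in\mathscr{L}_a$. We must show that every facet of $\mathcal{K}[U]$ has exactly $d+1$ edges. Every facet of $\mathcal{K}$ has exactly $d+1$ edges, because $N\ge a\ge k(d+1)$ lets any matching of size at most $d$ be extended inside $K_N^k$. So it suffices to show that every matching $F\subseteq U$ with $|F|\le d$ extends to a matching in $U$ of size $d+1$. Such an $F$ covers at most $kd$ vertices of $[N]$. Therefore at least $a-kd\ge k$ vertices of $A$ remain uncovered, and any $k$ of them form a hyperedge of $E(K_A^k)\subseteq U$ that is disjoint from $F$. Adding it gives a larger matching in $U$, and we repeat until the matching has size $d+1$. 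Thus every facet of $\mathcal{K}[U]$ has $d+1$ edges, and is therefore a facet of $\mathcal{K}$.

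The only step needing care is the skeleton reduction in the connectedness argument: we need the $d$-skeleton to keep $(d-1)$-connectivity, not merely the full matching complex to have it. If one prefers to avoid the homotopy-theoretic remark, one can instead run the cone-attachment induction of \cref{lem:robust-hypergraph-connectivity} directly on $d$-skeleta. In that version each attaching subcomplex is a $(d-1)$-skeleton, and the same connectivity bookkeeping goes through.
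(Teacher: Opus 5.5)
Your proof is correct and follows the same route as the paper. Connectivity comes from the parallel-expansion isomorphism together with the robust cone-attachment lemma, and completion comes from the count $a-kd\ge k$, which shows that every facet of $\mathcal{K}[U]$ has $d+1$ edges. You also spell out points the paper leaves implicit: $(d-1)$-connectivity passes to the $d$-skeleton, and $\mathscr{L}_a$ is nonempty and upward closed.
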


\begin{proof}
The connectivity condition follows from
Lemma~\ref{lem:parallel-expansion-matching} and
Lemma~\ref{lem:robust-hypergraph-connectivity} as follows.

Let $D=\{D_w: w \in W\}$ be a set of finite fibers, and let $U \in \mathscr{L}_a$.  Since $U$ contains a copy of $K^k_a$, then $\mathcal{K}(U;D)$ does too.  By Lemma~\ref{lem:parallel-expansion-matching} and Lemma~\ref{lem:robust-hypergraph-connectivity}, the complex $\mathcal{K}(U;D)$ is at least $(d-1)$-connected.

For completion, let $\eta$ be a facet of $\mathcal{K}[U]$.  Since $U$ contained a complete hypergraph with $k(d+1)$ vertices, we know that $\eta$ must contain $d+1$ edges.  Therefore, $\eta$ was already a facet of $\mathcal{K}$.
\end{proof}

For $k=2$, this gives selection structures from ordinary matching complexes
of complete graphs.  In this case
\[
        \nu_2(a)=\left\lfloor\frac{a+1}{3}\right\rfloor .
\]

\section{Proof of \cref{thm:selection-structure-komiya}}

We use the following well known lemma.

\begin{lemma}\label{cor:face-preserving}
    Let $P$ be a polytope and $f: P \to P$ be a continuous map.  Assume that for each proper face $\sigma$ of $P$, we have $f(\sigma) \subseteq \sigma$.  Then, $f$ is surjective.
\end{lemma}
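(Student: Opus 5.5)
The plan is to argue by contradiction: if $f$ is not surjective, then $f$ restricted to the boundary is null-homotopic inside a sphere, and face-preservation rules this out. Since $P$ is a compact convex $d$-dimensional polytope, identify it (after an affine change of coordinates within its affine hull) with a convex body in $\rr^d$ with nonempty interior, so that $\partial P$ is homeomorphic to $S^{d-1}$. Note that $f(\partial P)\subseteq \partial P$, because every boundary point lies in some proper face $\sigma$ and $f(\sigma)\subseteq\sigma\subseteq\partial P$.

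The first step is to show that $f|_{\partial P}\colon \partial P\to\partial P$ is homotopic to the identity. I would use the straight-line homotopy $h_t(x)=(1-t)x+tf(x)$. For $x\in\partial P$, let $\sigma$ be the smallest face containing $x$; then $x\in\sigma$ and $f(x)\in\sigma$. Since $\sigma$ is convex and proper, the whole segment lies in $\sigma\subseteq\partial P$. So $h_t$ maps $\partial P$ into $\partial P$ for all $t$, and $f|_{\partial P}\simeq \mathrm{id}_{\partial P}$ as self-maps of $S^{d-1}$. It has degree $1$ and is not null-homotopic (for $d\ge 1$; when $d=0$ the polytope is a point and the claim is trivial).

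Next, suppose some $q\in P$ is not in $f(P)$. The main obstacle is the case $q\in\partial P$, which I handle by noting that the image of $f$ is compact. Then some small open ball around $q$ misses $f(P)$, and this ball contains an interior point of $P$. So we may assume $q\in\operatorname{int}P$. Let $\rho\colon P\setminus\{q\}\to\partial P$ be the radial projection from $q$; it is continuous since $P$ is convex with $q$ interior, and it is the identity on $\partial P$.

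The map $\rho\circ f\colon P\to\partial P$ is then defined on the contractible space $P$, so its restriction $\rho\circ f|_{\partial P}$ is null-homotopic in $\partial P$. But $f(\partial P)\subseteq\partial P$ and $\rho$ fixes $\partial P$, so $\rho\circ f|_{\partial P}=f|_{\partial P}$, which is homotopic to the identity of $S^{d-1}$. This contradicts the fact that the identity of $S^{d-1}$ is not null-homotopic, a standard consequence of Brouwer's theorem or homology. Hence $f$ is surjective.
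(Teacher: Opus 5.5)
Your proof is correct and follows the same route as the paper: the straight-line homotopy inside faces shows $f|_{\partial P}\simeq \mathrm{id}_{\partial P}$, so $f|_{\partial P}$ has degree $1$. You additionally spell out the final step the paper leaves implicit: you reduce a missed point to a missed interior point, then use radial projection to show that a non-surjective $f$ would make $f|_{\partial P}$ null-homotopic.
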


Let us give a proof of \cref{cor:face-preserving} for completeness.

\begin{proof}[Proof of \cref{cor:face-preserving}]
    Let $g: \partial P \to \partial P$ be the identity.  Note that the map $f$ sends the boundary of $P$ to the boundary of $P$.  Since the faces of $P$ are convex, for any face $\sigma$ of $P$, $x \in \sigma$, and $t \in [0,1]$, we have $t f(x) + (1-t)g(x) \in \sigma$.  Therefore
    \begin{align*}
        H: \partial P \times [0,1] & \to \partial P \\
        (x,t) & \mapsto t f(x) + (1-t)g(x)
    \end{align*}
    is well defined and the maps $g$ and $f|_{\partial P}$ are homotopic.  Moreover since $\partial P \cong S^{d-1}$, the map $f|_{\partial P}$ is of degree $1$, and therefore $f$ must be surjective.  
\end{proof}

Before proving \cref{thm:selection-structure-komiya}, we present a discrete version.  The KKM theorem has a discrete equivalent version, Sperner's lemma \cite{Sperner1928}.  The definition and result below is simply the analogue of Sperner's lemma to our setting.  For a triangulation $\mathcal{T}$ of a polytope $P$, let $V(\mathcal{T})$ be its set of vertices.

\begin{definition}[\(\Sigma\)-Sperner--Shapley labelling]
\label{def:choice-sperner-labelling}
Let $W$ be a finite set and \( \Sigma=(\mathcal{K},\mathscr{L}) \)
be a \(d\)-admissible selection structure on \(W\).  Let $P$ be a polytope and $\mathcal{T}$ be a face-refining triangulation of $P$.  A family of sets of vertices of $\mathcal{T}$
\( \bigl(A^w_\tau
        \mid
        w\in W,\ \tau \mbox{ is a face of }P\bigr) \)
is a \(\Sigma\)-Sperner--Shapley labelling of \(\mathcal{T}\) if, for every
\(W'\subseteq W\) such that
\( W\setminus W'\notin\mathscr{L}\) and every face $\sigma$ of $P$, we have
\[
        \sigma \cap V(\mathcal{T})
        \subseteq
        \bigcup_{\tau\subseteq\sigma}
        \ \bigcup_{w\in W'}A^w_\tau .
\]
\end{definition}

\begin{theorem}[Selection structure Sperner--Shapley theorem]
\label{thm:selection-structure-shapley}
Let \(P\) be a \(d\)-dimensional polytope, $\mathcal{T}$ be a face-refining triangulation of $P$, $p \in P$, and $W$ be a finite set.  Let \( \Sigma=(\mathcal{K},\mathscr{L}) \)
be a \(d\)-admissible selection structure on \(W\).  For every
\(w\in W\) and every face \(\tau\) of \(P\), let \( A^w_\tau\subseteq V(\mathcal{T})\), and let
\(y^w_\tau\in\tau\) be a point.

If \( \bigl(A^w_\tau \mid w\in W,\ \tau\mbox{ is a face of }P\bigr) \)
is a \(\Sigma\)-Sperner--Shapley labeling of \(\mathcal{T}\), then there exist a containment-maximal face
\( I\in\mathcal K \)
and faces
\( \tau_w\) of \( P \) for each \( w\in I\) such that
\( p\in
        \operatorname{conv}\{y^w_{\tau_w}:w\in I\} \)
and such that there exists a simplex of $\mathcal{T}$ whose vertices have labels in every set $A^w_{\tau_w}$ for $w\in I$.
\end{theorem}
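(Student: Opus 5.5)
Following the hint after \cref{thm:selection-structure-komiya}, we reduce everything to a face-preserving map $Y\colon P\to P$ and then apply \cref{cor:face-preserving}. The construction goes through the parallel expansion $\mathcal{K}(W;D)$, with fibers recording the possible labels.

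\textbf{Fibers and the map $\ell$.} For each source $w$, take the fiber $D_w$ to be the set of faces $\tau$ of $P$, so that a vertex of the expansion is a pair $(w,\tau)$. Map each such vertex to the point $y^w_\tau\in\tau$, and extend affinely over simplices of $\bigast_w D_w$. This defines a map $\Phi$ from the expanded complex into $P$.

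Next, define a simplicial map $\ell$ from a subdivision of $\mathcal{T}$ into the expanded complex. For each vertex $v$ of $\mathcal{T}$, let $\sigma(v)$ be the carrier face of $P$ containing $v$ in its relative interior. Consider
\[
S(v)=\{w\in W: v\in A^w_\tau \text{ for some } \tau\subseteq\sigma(v)\}.
\]
Taking $W'=W\setminus S(v)$, the labelling condition at $v$ fails. Hence $W\setminus W'=S(v)\in\mathscr{L}$, and by upward closure every superset of $S(v)$ is in $\mathscr{L}$ as well.

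So each vertex $v$ comes with the subcomplex $\mathcal{K}(S(v);D)$ restricted to pairs $(w,\tau)$ with $v\in A^w_\tau$ and $\tau\subseteq\sigma(v)$. Call this restricted parallel expansion $C(v)$. Its fibers are nonempty, so by the connectedness axiom $C(v)$ is $(d-1)$-connected.

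\textbf{Building $\ell$ on the barycentric subdivision.} Work on the barycentric subdivision $\mathcal{T}'$ of $\mathcal{T}$ and proceed skeleton by skeleton. To each simplex $s$ of $\mathcal{T}$, associate the complex
\[
C(s)=\mathcal{K}\Bigl(\textstyle\bigcup_{v\in s}S(v);D_s\Bigr),
\]
where $D_s$ uses the pairs allowed at some vertex of $s$. This complex is also $(d-1)$-connected, since $\bigcup_{v\in s} S(v)\in\mathscr{L}$. The assignment is monotone, with $C(s)\subseteq C(s')$ when $s\subseteq s'$.

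Because each $C(s)$ is $(d-1)$-connected and $\dim P=d$, a standard nerve/extension argument over a skeleton of dimension at most $d$ produces a continuous map $\ell\colon P\to \mathcal{K}(W;D)$. It carries each simplex $s$ into $C(s)$ for the open star structure, and we make it simplicial after subdivision. This extension across skeleta of dimension at most $d$ is where the $(d-1)$-connectedness is used.

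\textbf{The face-preserving map $Y$.} Set $Y=\Phi\circ\ell$. If $x$ lies in a face $\sigma$ of $P$, then every vertex used near $x$ has $\sigma(v)\subseteq\sigma$. All the $\tau$'s that appear therefore satisfy $\tau\subseteq\sigma$, which gives $y^w_\tau\in\sigma$. Convexity then yields $Y(\sigma)\subseteq\sigma$.

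By \cref{cor:face-preserving}, $Y$ is surjective, so some $x$ has $Y(x)=p$. Let $s$ be a simplex of $\mathcal{T}$ (or of its subdivision) containing $x$. Then $\ell(x)$ lies in a face $\eta$ of the expanded complex, and $p\in\conv\{y^w_{\tau_w}:(w,\tau_w)\in\eta\}$.

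\textbf{Main obstacle.} Two things remain, and this is the step I expect to be hardest.

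First, each label $(w,\tau_w)\in\eta$ must actually come from a vertex of the carrying simplex lying in $A^w_{\tau_w}$. To get this, I will choose $\ell$ simplicial, sending vertices of $\mathcal{T}'$ (barycenters of simplices $s$) to vertices of $C(s)$ that are witnessed by vertices of $s$. Then all vertices of $\eta$ are witnessed by vertices of a single simplex of $\mathcal{T}$, namely the largest $s$ in the chain. The $(d-1)$-connectedness is only needed to extend on the higher simplices of the chain. The delicate part is arranging the extension so that the image of a chain simplex stays inside a fiber-compatible face.

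Second, $\src(\eta)$ must be a facet of $\mathcal{K}$. Here I will use Carathéodory inside the $d$-dimensional face to reduce $\eta$ to at most $d+1$ points. I will then use the completion axiom: $\src(\eta)\in\mathcal{K}[U]$ with $U\in\mathscr{L}$ extends to a facet $I$ of $\mathcal{K}$. For the extra sources $w\in I\setminus\src(\eta)$, pick any $\tau_w$ that is witnessed. Adding these points keeps $p$ in the convex hull. The remaining care is that these extra points are still witnessed by the same simplex, and I plan to get this by choosing the extension inside $\mathcal{K}(U;D)$ with $U$ the union of the $S(v)$ over that simplex.
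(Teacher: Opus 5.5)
Your proposal is essentially the paper's proof. Your $C(s)$ is the paper's $\Gamma(\theta)$, and your $\ell$ is its $\psi$, built by the same skeleton-by-skeleton extension. $Y=\Phi\circ\ell$ is the same face-preserving map. Your resolution of the second obstacle is exactly the paper's final step: apply completion inside $\mathcal{K}(U;D_s)$ with $U=\bigcup_{v\in s}S(v)$, so the facet $I$ lies in $U$ and every added source is witnessed by a vertex of $s$.

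Your first "obstacle" is already settled by the carrier property $\ell(|s|)\subseteq|C(s)|$ that you built. For $x\in|s|$, the support $\eta$ of $\ell(x)$ is then a face of the subcomplex $C(s)$, whose vertices are by definition witnessed by vertices of $s$.

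So drop the barycentric-subdivision detour. As described it would not give a simplicial map in general, since the vertices you assign to the barycenters of a chain need not span a face of $\mathcal{K}(W;D)$. The Carath\'eodory reduction is likewise unnecessary.
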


\begin{proof}[Proof of \cref{thm:selection-structure-shapley}]

Let us first describe the idea behind the proof.  Let $\ff (P)$ be the lattice of non-empty faces of $P$.  The goal will be to construct a map ${\psi: P \to |\mathcal{K}(W;\mathcal{F}(P))|}$, the construction requires the $d$-admissibility of $\Sigma$ and the Sperner--Shapley condition.  Using $\psi$, we then construct a map $Y: P \to P$ that is face-preserving: $Y(\sigma) \subseteq \sigma$ for all faces $\sigma$ of $P$.  Then, a standard consequence of \cref{cor:face-preserving} is that $Y$ is surjective, which implies that $p$ is in its image.  We use $Y^{-1}(p)$ to conclude.

To simplify the notation in the proof, we use $\alpha = (w,\tau)$ to denote the element of $W \times \ff(P)$.  We extend this to $y_{\alpha} = y^w_{\tau}$ and $A_{\alpha} = A^w_{\tau}$.  We use $\src(\alpha) = w$ to recover the component from $W$.

For any point $x \in P$, let $\suppp_P(x)$ be the minimal face of $P$ that contains $x$.  For any face $\tau$ of $P$, let $V(\tau)$ be its set of vertices.  For a vertex \(v\in V(\mathcal T)\) and \(w\in W\), put
\[
        D_w(v)
        =
        \{\tau \mbox{ a face of } P:
        \tau\subseteq \operatorname{supp}_P(v)
        \text{ and }
        v\in A^w_\tau\}.
\]
Let \( U(v)=\{w\in W:D_w(v)\ne\emptyset\}\).

We first claim that \(U(v)\in\mathscr{L}\) for every vertex \(v\).
If instead \(U(v)\notin\mathscr{L}\), set
\(W'=W\setminus U(v)\).  Then
\(W\setminus W'=U(v)\notin\mathscr{L}\), so the
\(\Sigma\)-Sperner--Shapley labeling condition applies.

Applying the condition to the face
\(\operatorname{supp}_P(v)\), we find \(w\in W'\) and
\(\tau\subseteq\operatorname{supp}_P(v)\) with
\(v\in A^w_\tau\).  Therefore \(w\in U(v)\), contradicting
\(w\in W\setminus U(v)\).

For a simplex \(\theta\in\mathcal T\), define
\[
        D_w(\theta)
        =
        \bigcup_{v\in V(\theta)}D_w(v),
        \qquad
        U(\theta)=\bigcup_{v\in V(\theta)}U(v) =
        \{w:D_w(\theta)\ne\emptyset\}.
\]
Since \(\mathscr{L}\) is upward closed, \(U(\theta)\in\mathscr{L}\).
Set
\[
        \Gamma(\theta)
        =
        \mathcal{K}\left(
        U(\theta);
        \bigl(D_w(\theta)\bigr)_{w\in U(\theta)}
        \right).
\]

By \(d\)-admissibility, \(|\Gamma(\theta)|\) is
\((d-1)\)-connected.  The assignment
\(\theta\mapsto\Gamma(\theta)\) is monotone under inclusion of
simplices.

We now construct the map $\psi$.  Since
\(\dim P=d\), the connectivity of the complexes
\(\Gamma(\theta)\) will allow us to construct a continuous map \( \psi:P\longrightarrow |\mathcal{K}(W;\mathcal F(P))| \)
such that
\(
        \psi(|\theta|)
        \subseteq
        |\Gamma(\theta)|\)
 {for every }\(\theta\in\mathcal T \).
To do this, define \(\psi\) inductively over the skeleta of
\(\mathcal T\).  For a vertex $v$, choose an arbitrary point in
\(|\Gamma(v)|\) for $\psi(v)$.  If \(\psi\) has been defined on the
\((r-1)\)-skeleton and \(\theta\) is an \(r\)-simplex, then
\(\psi(\partial\theta)\subseteq|\Gamma(\theta)|\); since
\(r\le d\) and \(|\Gamma(\theta)|\) is \((d-1)\)-connected, this
boundary map extends over \(\theta\).  

The vertices of $\mathcal{K}(W;\ff(P))$ are the pairs $\alpha=(w,\tau) \in W \times \ff(P)$.  We can write $\psi(x)$ as a convex combination of vertices of $|\mathcal{K}(W;\mathcal F(P))|$, as follows.
\[
        \psi(x)=
        \sum_{\alpha\in W \times \ff(P)}
        \lambda_{\alpha}(x) \alpha.
\]
Using these coefficients, we can define
\begin{align*}
     Y:P & \to P \\
     x & \mapsto
        \sum_{\alpha \in W \times \ff(P)}
        \lambda_{\alpha}(x)y_{\alpha}
\end{align*}

Let us show that the map \(Y\) is face-preserving.  If \(x\in\sigma\in \ff(P)\)
and \(x\in|\theta|\subseteq\sigma\), then every vertex $\alpha$  in the support of \(\psi(x)\) lies in
\(\Gamma(\theta)\).  This means that \(\tau\in D_w(v)\) for some
\(v\in V(\theta)\), so \(\tau\subseteq \operatorname{supp}_{P}(v)\subseteq \sigma\).
Therefore, \(y_{\alpha}\in\sigma\), and so \(Y(x)\in\sigma\).

By \cref{cor:face-preserving}, the map $Y$ is surjective.  Choose
\(x\in P\) with \( Y(x)=p\).
Let \(\theta\) be a simplex containing \(x\).  The support \( \eta=
        \{\alpha:\lambda_{\alpha}(x)>0\}\)
is a face of \(\Gamma(\theta)\), and
\[
        p\in
        \operatorname{conv}
        \{y_{\alpha}:\alpha \in\eta\}.
\]
By the completion condition of the $d$-admissibility, we can extend \(\eta\) to an element
\[
        \beta=\{(w,\tau_w):w\in B\},
        \qquad
        B\in \operatorname{Facets}(\mathcal{K}).
\]
Since $\eta \subseteq \beta$, we still have
\( p\in
        \operatorname{conv}
        \{y_{\alpha}:\alpha \in \beta\}\).

For each \(w\in B\), the label \(\tau_w\) belongs to
\(D_w(\theta)\), so there is a vertex
\(v_w\in V(\theta)\) with
\(
        v_w\in A_{\alpha}\) for \(\alpha = (w,\tau_w) \in \beta\).
\end{proof}

Now we are ready to prove \cref{thm:selection-structure-komiya}

\begin{proof}[Proof of \cref{thm:selection-structure-komiya}]

For a face-refining triangulation $\mathcal{T}$ of $P$, the sets $V(\mathcal{T}) \cap A^w_{\tau}$ form a Sperner--Shapley labeling of $\mathcal{T}$.

Take a sequence $\mathcal{T}_m$ of face-refining triangulations of $P$ such that their mesh tends to $0$.  We can apply \cref{thm:selection-structure-shapley} to each $\mathcal{T}_m$.  Let $z_m$ be a point in the simplex of $\mathcal{T}_m$ from the conclusion of \cref{thm:selection-structure-shapley}.

Passing to a subsequence if necessary, we may assume without loss of generality that the face $B \in \operatorname{Facets}(\mathcal{K})$ and the faces $\tau_w$ for $w\in B$ from \cref{thm:selection-structure-shapley} in each $\mathcal{T}_m$ are constant.  Additionally, by the compactness of $P$ we may also assume that $z_m$ converges to some point $z \in P$.

Since each set $A^w_{\tau}$ is closed and the mesh of the triangulations $\mathcal{T}_m$ is arbitrarily small, we obtain $z \in A^w_{\tau_w}$ for all $w \in B$, as we wanted to show.
\end{proof}

As mentioned in the introduction, \cref{thm:selection-structure-komiya} implies the following theorem by McGinnis

\begin{theorem}[McGinnis 2024 \cite{McGinnis2024-arxiv}]
    Let $P$ be a $d$-dimensional polytope, $p \in P$, and $M$ be a matroid of rank $d+1$ on the ground set $W$.  Let $r_M$ be the rank function of $M$.  For every $w \in W$ and every face $\tau$ of $P$, let $y^w_{\tau}$ be a point of $\tau$ and $A^w_{\tau} \subseteq P$ be a closed set such that for all $W' \subset W$ with $r_M(W \setminus W') \le d$ and every face $\sigma$ of $P$, we have
    \[
    \sigma \subset \bigcup_{\tau \subseteq \sigma} \bigcup_{w \in W'} A^w_{\tau}.
    \]
    Then there exists a basis $B$ of $M$ and faces $\tau_w$ for $w \in B$ of $P$ such that $p \in \conv\{y^w_{\tau_w}: w \in B\}$ and $\bigcap_{w \in B}A^w_{\tau_w}\neq \emptyset$.
\end{theorem}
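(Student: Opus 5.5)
The plan is to deduce McGinnis' theorem directly from \cref{thm:selection-structure-komiya}, applied to the matroidal selection structure of \cref{example:matroids}. Since $M$ has rank $d+1$, the $d$-skeleton of its independence complex is the whole complex, so we take
\[
\mathcal{K}=\operatorname{Ind}(M)^{(d)}=\operatorname{Ind}(M),\qquad \mathscr{L}=\mathscr{L}_{M,d}=\{U\subseteq W: r_M(U)\ge d+1\}.
\]
By \cref{example:matroids}, $\Sigma_M=(\mathcal{K},\mathscr{L})$ is a $d$-admissible selection structure on $W$.

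The first step is to check that the given family $(A^w_\tau)$ is a $\Sigma_M$-Komiya cover of $P$ in the sense of \cref{def:choice-komiya-cover}. Let $W'\subseteq W$ with $W\setminus W'\notin\mathscr{L}$. This means $r_M(W\setminus W')\le d$, which is exactly the hypothesis of McGinnis' theorem. That hypothesis then gives
\[
\sigma\subseteq\bigcup_{\tau\subseteq\sigma}\bigcup_{w\in W'}A^w_\tau
\]
for every face $\sigma$ of $P$. The paper writes $W'\subset W$ in McGinnis' statement, but the case $W'=W$ is harmless. Indeed, $r_M(\emptyset)=0\le d$, so the hypothesis should be read as applying to all $W'\subseteq W$; if it were read strictly, $W'=W$ still satisfies the covering condition by monotonicity, since any proper subset $W'$ with the required rank condition, say $W'=W\setminus\{w_0\}$, already covers.

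Next, \cref{thm:selection-structure-komiya} yields a containment-maximal face $I\in\mathcal{K}$ and faces $\tau_w$ for $w\in I$ such that $p\in\operatorname{conv}\{y^w_{\tau_w}:w\in I\}$ and $\bigcap_{w\in I}A^w_{\tau_w}\neq\emptyset$.

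The only remaining point is to see that a facet of $\operatorname{Ind}(M)$ is a basis of $M$. This holds because maximal independent sets of a matroid are exactly its bases. So $B=I$ works, and the theorem follows.

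The main potential obstacle is purely bookkeeping: matching $W\setminus W'\notin\mathscr{L}$ with $r_M(W\setminus W')\le d$, and ensuring the rank-$(d+1)$ assumption makes the skeleton truncation irrelevant. Both are immediate.
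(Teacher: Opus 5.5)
Your proposal is correct and is essentially the paper's proof: take $\Sigma_M$ from \cref{example:matroids}, note that $\Ind(M)^{(d)}=\Ind(M)$ because $M$ has rank $d+1$, and identify $W\setminus W'\notin\mathscr{L}_{M,d}$ with $r_M(W\setminus W')\le d$. Then apply \cref{thm:selection-structure-komiya} and observe that a containment-maximal face of $\Ind(M)$ is a basis. (Your fallback argument for a strict reading of $W'\subset W$ needs $d\ge 1$, but the paper writes $\subset$ for ordinary containment, so the $\subseteq$ reading you adopt is the intended one.)
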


\begin{proof}
    Consider the selection structure $\Sigma_M = (\Ind(M)^{(d)}, \mathscr{L}_{M,d})$ from \cref{example:matroids}.  Since $M$ has rank $d+1$, then $\Ind(M)=\Ind(M)^{(d)}$.  The condition $r(W \setminus W') \le d$ is equivalent to $W \setminus W' \not\in \mathscr{L}_{M,d}$.  Therefore, the sets $A^w_{\tau}$ form an $\Sigma_M$-Komiya cover of $P$.  We can apply \cref{thm:selection-structure-komiya} to this family.  A containment-maximal face of $\Ind(M)$ is a basis, so we obtain the desired conclusion.
\end{proof}

\section{Selection structure versions of Carath\'eodory's and Helly's theorems.}\label{sec:helly-and-caratheodory}

\subsection{Selection structure Carath\'eodory}

We present two proofs of \cref{thm:selection-structure-colorful-caratheodory}.  The first proof is direct, and does not use the completion condition for selection structure.  The second proof shows that \cref{thm:selection-structure-colorful-caratheodory} is a consequence of \cref{thm:selection-structure-komiya}.

Frick and Zerbib proved a polytopal colorful Komiya KKMS theorem \cite{Frick2019}.  Additionally, they showed how to use their main result to give a new proof of B\'ar\'any's colorful Carath\'eodory theorem, \cref{thm:colorful-caratheodory}.  For our second proof of \cref{thm:selection-structure-colorful-caratheodory}, we adapt the Frick--Zerbib ideas to use \cref{thm:selection-structure-komiya} instead.

\begin{proof}[First proof of \cref{thm:selection-structure-colorful-caratheodory}]
    For each $w \in W$, let $D_w = \{w\}\times X_w$ and $D = \{D_w:w\in W\}$.  Consider the piecewise linear map
    \[
    f: |\mathcal{K}(W;D)|\to \rr^d
    \]
    such that $f(w,x) = x$ for each $w \in W$ and $x \in X_w$, and extended linearly on the faces of $\mathcal{K}(W;D)$.

    If $0 \in f(|\mathcal{K}(W;D)|)$, let $z$ be a point such that $f(z) = 0$ and let $\eta$ be a face that contains $z$.  Then, $\src(\eta)$ is a face of $\mathcal{K}$ satisfying the conclusion.

    Now assume for a contradiction that $0 \not\in f(|\mathcal{K}(W;D)|)$.  Let $\varepsilon= \dist (0,f(|\mathcal{K}(W;D)|))>0$ which is well defined since $f(|\mathcal{K}(W;D)|)$ is compact.

    For each $q \in S^{d-1}$, let $D_w(q) =\{x \in X_w: \langle q,x\rangle < \varepsilon/3\}$ and ${U(q) = \{w \in W: D_w(q) \neq \emptyset\}}$.

    We first show that $U(q) \in \mathscr{L}$ for all $q \in S^{d-1}$.  If this fails for some $q$, let $W'=W\setminus U(q)$.  We apply the condition of the problem to $W \setminus W' = U(q) \not\in\mathscr{L}$ and get
    \[
    0 \in \conv \left(\bigcup_{w \in W'} X_w\right)
    \]
    By the definition of $U(q)$ for every $w \in W'$ and every $x\in X_w$ we have $\langle x,q\rangle \ge \varepsilon/3$, which would separate all such $X_w$ from $0$.  Therefore $U(q) \in \mathscr{L}$.

    Now, for each $q \in S^{d-1}$, choose an open neighborhood $O_q$ of $q$ such that for every $w \in U(q)$, $x \in D_w(q)$ and every $q' \in O_q$ we have  $\langle q',x\rangle < \varepsilon/2$.  We can do this because the sets $D_w(q)$ are finite.  Take a sufficiently fine triangulation $T$ of $S^{d-1}$ such that for each vertex $v \in V(T)$ there is a point $q_v \in S^{d-1}$ such that the closed star $\St(v)$ of $v$ satisfies $\St(v) \subseteq O_{q_v}$.

    For a simplex $\theta \in T$ with vertices $V(\theta)$, we define the sets
    \[
    U_{\theta}=\bigcup_{v \in V(\theta)}U_{q_v}, \qquad D^{\theta}_w = \bigcup_{ \substack{{v\in V(\theta)}\\ {w \in U_{q_v}}}} D_w(q_v).
    \]
    Since $U_{q_v} \in \mathscr{L}$ and $\mathscr{L}$ is upward closed, we have $U_{\theta}\in \mathscr{L}$.  The set of fibers we use will be $D^{\theta}=\{D^{\theta}_w : w \in W\}$.  Finally, let $\Gamma(\theta) = \mathcal{K}(U_{\theta}; D^{\theta})$.

    Since $\Sigma$ is $d$-admissible, we know that $\Gamma(\theta)$ is $(d-1)$-connected.  Moreover, if $\theta' \subset \theta$ we also have $\Gamma(\theta') \subset \Gamma(\theta)$.

    We now construct a continuous map
    \[
    s: S^{d-1}\to |\mathcal{K}(W;D)|
    \]
    such that for all simplices $\theta$ of $T$ we have $s(|\theta|) \subset |\Gamma(\theta)|$.

    This is done inductively over the skeleta of $T$.  On a vertex $v$, choose an arbitrary point in $|\Gamma(v)|$.  If $s$ has been constructed on the $(r-1)$-skeleton and $\theta$ is an $r$-dimensional simplex, then for all faces $\theta' \subset \theta$ we have $s(|\theta'|) \subset |\Gamma (\theta')| \subset |\Gamma(\theta)|$.  As $\Gamma(\theta)$ is $(d-1)$-connected, the map on $\partial \theta$ extends to $\theta$.

    Now define
    \begin{align*}
    h: |\mathcal{K}(W;D)| & \to S^{d-1} \\
    h(y) & = \frac{f(y)}{\|f(y)\|}.
    \end{align*}
    This is well defined since the image of $f$ has no zeros.  Our final map is
    \begin{align*}
        \phi : S^{d-1} & \to S^{d-1} \\
        \phi = h \circ s.
    \end{align*}

    We claim that $\phi$ has non-zero degree.  Let $q \in S^{d-1}$ and let $\theta \in T$ be a simplex with $q \in |\theta|$.  Since $s(q) \in |\Gamma(\theta)|$ every vertex of the minimal simplex containing $s(q)$ is a point $x \in D_w(q_v)$ for some $v \in V(\theta)$.  However, $q \in |\theta| \subseteq \St (v) \subseteq O_{q_v}$.  This implies that $\langle q,x \rangle < \varepsilon/2$.  Since this inequality is preserved over convex combination, we have $\langle q, f(s(q)) \rangle < \varepsilon/2$.

    We know that $\|f(s(q))\|\ge \varepsilon $, which gives us
    \[
    \langle q, \phi(q)\rangle = \left\langle q, \frac{f(s(q))}{\|f(s(q))\|} \right\rangle < \frac{1}{\varepsilon} \cdot \frac{\varepsilon}{2}< \frac{1}{2}
    \]
    This implies that $\phi(q) \neq q$.  We can take the standard straightline homotopy between $\phi(q)$ and $-q$ to show that $\phi(q)$ is homotopic to the antipodal map.  More precisely, we take
    \begin{align*}
        H: [0,1] \times S^{d-1} & \to S^{d-1} \\
        (\lambda,q) & \mapsto \frac{(1-\lambda)\phi(q) - \lambda q}{\|(1-\lambda)\phi(q) - \lambda q\|}.
    \end{align*}
    Since the antipodal map has degree $(-1)^{d}\neq 0$, the same holds for $\phi$.

    On the other hand, since $\mathcal{K}(W;D)$ is $(d-1)$-connected, the map $s: S^{d-1}\to |\mathcal{K}(W;D)|$ extends to a map $\overline{s}: B^d \to |\mathcal{K}(W;D)|$, where $B^d \subset \rr^d$ is the closed unit ball.  The map $h \circ \overline{s}: B^d \to S^{d-1}$ extends $\phi$, which means that the degree of $\phi$ must be zero.  This is the contradiction we wanted.
\end{proof}

Now we present the proof that uses the selection structure Komiya theorem.

\begin{proof}[Second proof of \cref{thm:selection-structure-colorful-caratheodory}]
We derive the result from \cref{thm:selection-structure-komiya}.  For a face
\(\tau\) of a polytope \(P\) containing the origin, write \(
        \operatorname{cone}(\tau)
        =
        \{\lambda x:\lambda\ge 0,\ x\in \tau\}
\) for the cone over \(\tau\).

First, suppose that \(0\in X_w\) for some \(w\in W\).  Applying the completion condition to the singleton
face supported at \(w\), we can extend \(w\) to some \(B\in \operatorname{Facets}(\mathcal{K})\).
Choosing \(x_w=0\) and arbitrary points \(x_u\in X_u\) for
\(u\in B\setminus\{w\}\) gives the conclusion.  We assume from
now on that \(0\notin \bigcup_{w\in W}X_w\).

Deleting all but one point on each ray from the origin in each \(X_w\) does
not affect the hypothesis nor the conclusion.

Choose a \(d\)-dimensional polytope \(P\subset \rr^d\) with
\(0\in\relint(P)\), sufficiently fine around the origin, so that the following
two conditions hold.

\begin{enumerate}
\item If two nonzero points lie in the cone over the same proper face of
\(P\), then their scalar product is positive.

\item For every proper face \(\tau\) of \(P\) and every \(w\in W\), the set
\(\operatorname{cone}(\tau)\cap X_w\) has at most one point.
\end{enumerate}

For every \(w\in W\) and every face \(\tau\) of \(P\), we now define closed
sets \(A^w_\tau\subseteq P\) and points \(y^w_\tau\in\tau\).

If \(\tau=P\), choose \(y^w_P\in P\) arbitrarily and set \( A^w_P=\emptyset\).
Now let \(\tau\) be a proper face of $P$.  If
\(\operatorname{cone}(\tau)\cap X_w=\{x^w_\tau\}\), let \(y^w_\tau\) be the intersection of
\(\tau\) with the ray from the origin through \(x^w_\tau\), and set
\[
        A^w_\tau
        =
        \{q\in P:\langle q,x^w_\tau\rangle\ge 0\}.
\]
If \(\operatorname{cone}(\tau)\cap X_w=\emptyset\), choose \(y^w_\tau\in\tau\) arbitrarily and
set \( A^w_\tau=\tau\).

We claim that the family \( \bigl(A^w_\tau
        \mid
        w\in W,\ \tau \mbox{ a face of }P\bigr) \)
is a \(\Sigma\)-Komiya cover of \(P\).  Let
\(W'\subseteq W\) be such that \(W\setminus W'\notin\mathscr{L}\).  By the
hypothesis, \(0\in\conv(\bigcup_{w\in W'}X_w)\).

First, let \(\sigma\) be a proper face of \(P\), and let \(q\in\sigma\).
Choose any \(w\in W'\).  If \(\operatorname{cone}(\sigma)\cap X_w=\emptyset\), then
\(A^w_\sigma=\sigma\), so \(q\in A^w_\sigma\).  If
\(\operatorname{cone}(\sigma)\cap X_w=\{x^w_\sigma\}\), then \(x^w_\sigma\) lies in the cone
over \(\sigma\).  By the choice of \(P\),
\( \langle q,x^w_\sigma\rangle\ge 0\),
and therefore \(q\in A^w_\sigma\).  This means that
\[
        \sigma
        \subseteq
        \bigcup_{\tau\subseteq\sigma}
        \ \bigcup_{w\in W'} A^w_\tau
\]
for every proper face \(\sigma\).

It remains to check the face \(\sigma=P\).   Let \(q\in P\).  Since \( 0\in
        \conv
        \Bigl(\bigcup_{w\in W'}X_w\Bigr)\),
there exist \(w\in W'\) and \(x\in X_w\) such that
\( \langle q,x\rangle\ge 0\).
Let \(\tau\) be the proper face of \(P\) whose cone contains \(x\).  By
the construction of \(P\), \(x=x^w_\tau\).  Therefore \(q\in A^w_\tau\).
This proves the \(\Sigma\)-Komiya covering condition.

Apply \cref{thm:selection-structure-komiya} with the point \(p=0\in P\).
There exist \(B\in \operatorname{Facets}(\mathcal{K})\), faces \(\tau_w\) of \(P\) for \(w\in B\),
and a point
\[
        z\in \bigcap_{w\in B} A^w_{\tau_w}
\]
such that \( 0\in
        \conv\{y^w_{\tau_w}:w\in B\}\).
Since \(A^w_P=\emptyset\), all faces \(\tau_w\) are proper faces.

Choose the coefficients \(\lambda_w\ge 0\), \(w\in B\), of a convex combination that shows  \( 0\in
        \conv\{y^w_{\tau_w}:w\in B\}\).
\[
        \sum_{w\in B}\lambda_w=1,
        \qquad
        \sum_{w\in B}\lambda_w y^w_{\tau_w}=0.
\]
Taking scalar product with \(z\), we get
\(
        0
        =
        \sum_{w\in B}
        \lambda_w\langle z,y^w_{\tau_w}\rangle
\).  If
\({\operatorname{cone}(\tau_w)\cap X_w=\{x^w_{\tau_w}\}}\), then \(y^w_{\tau_w}\) is a positive
multiple of \(x^w_{\tau_w}\), and \(z\in A^w_{\tau_w}\) gives \(\langle z,y^w_{\tau_w}\rangle\ge 0\).  Therefore, each term in the sum \(\sum_{w\in B}
        \lambda_w\langle z,y^w_{\tau_w}\rangle\) is nonnegative, and must therefore be zero.
        
If \(\operatorname{cone}(\tau_w)\cap X_w=\emptyset\), then \(A^w_{\tau_w}=\tau_w\), so both
\(z\) and \(y^w_{\tau_w}\) lie in the same proper face \(\tau_w\).  By the
choice of \(P\),
\( \langle z,y^w_{\tau_w}\rangle>0
\), so $\lambda_w =0$.

Therefore, for every \(w\in B\) with \(\lambda_w>0\), there is a point
\(x_w\in X_w\cap \operatorname{cone}(\tau_w)\), and
\( y^w_{\tau_w}=a_w x_w\)
for some scalar \(a_w>0\).

For \(w\in B\) with \(\lambda_w=0\), choose \(x_w\in X_w\) arbitrarily.
Then
\[
        0
        =
        \sum_{\lambda_w>0}
        \lambda_w y^w_{\tau_w}
        =
        \sum_{\lambda_w>0}
        \lambda_w a_w x_w .
\]
After normalizing the positive coefficients \(\lambda_w a_w\), this says
\( 0\in
        \conv\{x_w:\lambda_w>0\}\).
In particular
\(0\in
        \conv\{x_w:w\in B\}\),
as we wanted to prove.
\end{proof}

\subsection{Selection structure Helly}\label{sec:helly}

In this subsection, we prove \cref{thm:selection-structure-helly}.  We prove two generalizations of \cref{thm:selection-structure-helly}: one for $d$-collaspisble simplicial complexes, and the second for $d$-Leray selection complexes.  The version for $d$-Leray selection complexes is more general, but we keep the proof of both since the direct proof for $d$-collapsible simplicial complexes is much simpler.  Since both imply the Kalai--Meshulam topological Helly theorem when it is applied to families of convex sets, we believe they are worth presenting.

Let $X$ be a simplicial complex and $\sigma$ be a face such that
\begin{itemize}
    \item $\dim \sigma \le d-1 $ and
    \item $\sigma$ is contained in a unique containment-maximal face $\eta$.
\end{itemize}

If we remove all faces that contain $\sigma$, we obtain the simplicial complex $X' = X \setminus \{\tau : \sigma \subseteq \tau \}$.  We call this process an elementary $d$-collapse, and denote it $X \searrow X'$.

The simplicial complex $X$ is $d$-collapsible if there exists a finite sequence of $d$-collapses that starts with $X$ and ends with $\emptyset$.  Namely,
\[
X =X_0 \searrow X_1 \searrow X_2 \searrow \dots \searrow X_m = \emptyset.
\]

If $\mathcal{F}$ is a finite family of convex sets in $\rr^d$, then its nerve complex $X = N(\mathcal{F})$ is $d$-collapsible \cites{Wegner1975, Tancer2013}.

We prove our selection structure Helly for $d$-collapsible simplicial complexes.

\begin{theorem}\label{thm:selection-structure-d-collapsible}
    Let $W$ be a finite set and $X$ be $d$-collapsible simplicial complex with vertex set $W$.  Let $\Sigma=(\mathcal{K}, \mathscr{L})$ be a $d$-admissible selection structure on $W$.  Suppose that for every $W' \subseteq W$ such that $W\setminus W' \not\in \mathscr{L}$ we have $W' \not \in X$.  Then, there exists $I \in \mathcal K$ such that $ I \not\in X$.
\end{theorem}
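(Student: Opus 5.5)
The plan is to argue by contradiction. Suppose every face of $\mathcal K$ lies in $X$, i.e.\ $\mathcal K\subseteq X$. The hypothesis says that whenever $W'\in X$, its complement $W\setminus W'$ belongs to $\mathscr L$. I will run through a $d$-collapse sequence $X=X_0\searrow X_1\searrow\cdots\searrow X_m=\emptyset$ and look at the first step that destroys a face of $\mathcal K$. This step exists because $X_m=\emptyset$ and $\mathcal K\neq\emptyset$ (the empty face, or any vertex, lies in $\mathcal K$). Say the step is $X_i\searrow X_{i+1}$, removing all faces containing $\sigma$, where $\dim\sigma\le d-1$ and $\eta$ is the unique maximal face of $X_i$ containing $\sigma$. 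By minimality of $i$ we have $\mathcal K\subseteq X_i$. Some face of $\mathcal K$ contains $\sigma$, so $\sigma\in\mathcal K$.

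Next set $U'=W\setminus(\eta\setminus\sigma)$. Since $\eta\in X_i\subseteq X$, the hypothesis gives $W\setminus\eta\in\mathscr L$. As $W\setminus\eta\subseteq U'$ and $\mathscr L$ is upward closed, $U'\in\mathscr L$. Also $\sigma\subseteq U'$, so $\sigma\in\mathcal K[U']$.

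The key step is to show that $\sigma$ is not a maximal face of $\mathcal K[U']$: there is $F\in\mathcal K$ with $\sigma\subsetneq F\subseteq U'$. Granting this, the contradiction is immediate. Since $F\subseteq U'$ we have $F\cap\eta=\sigma$. But $F\in\mathcal K\subseteq X_i$ and $F\supseteq\sigma$, so $F\subseteq\eta$, which forces $F=\sigma$, a contradiction.

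I expect this key step to be the main obstacle. I would not route it through the completion axiom, because I do not see a way to rule out small facets of $\mathcal K$ from that axiom alone. Instead I would use the connectedness axiom with well-chosen fibers. Take $D_w$ of size $2$ for $w\in\sigma$ and of size $1$ otherwise, and suppose for contradiction that $\sigma$ is maximal in $\mathcal K[U']$. The join $S=\bigast_{w\in\sigma}D_w$ is a subcomplex of $\mathcal K(U';D)$ homeomorphic to a sphere of dimension $|\sigma|-1\le d-1$. Each top face of $S$ is a lift of $\sigma$. Because $\sigma$ is maximal in $\mathcal K[U']$, no face of $\mathcal K(U';D)$ properly contains such a lift. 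Hence the fundamental cycle of $S$ cannot be a boundary in $\mathcal K(U';D)$: no $|\sigma|$-dimensional simplex has any of its top faces as a facet. Thus $\tilde H_{|\sigma|-1}(\mathcal K(U';D))\neq 0$, which contradicts $(d-1)$-connectedness via Hurewicz. The degenerate case $\sigma=\emptyset$ gives a $(-1)$-sphere, and then the claim reduces to the non-emptiness of $\mathcal K(U';D)$. With this step in place the induction closes and the theorem follows.
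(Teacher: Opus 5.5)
Your proof is correct. Its skeleton matches the paper's: take the first collapse $[\sigma,\eta]$ that removes a face of $\mathcal{K}$, pass to the set $W\setminus(\eta\setminus\sigma)\in\mathscr{L}$, and use doubled fibers on $\sigma$ to produce a $(|\sigma|-1)$-sphere whose top simplices lie in no larger simplex. The difference is where you run the sphere argument.

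The paper first applies the completion axiom to $U=W\setminus(\eta\setminus\sigma)$. This gives some $B_0\in\operatorname{Facets}(\mathcal{K})$ with $\sigma\subseteq B_0\subseteq U$, and the collapse structure forces $B_0=\sigma$. So $\sigma$ is a facet of all of $\mathcal{K}$, and the paper derives the homological contradiction in $\mathcal{K}(W;D)$. That way it needs the connectedness axiom only at $U=W$.

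You instead run the sphere argument directly in $\mathcal{K}(U';D)$. This is legitimate because connectedness is required for every member of $\mathscr{L}$. It shows $\sigma$ cannot be maximal in $\mathcal{K}[U']$, and your collapse argument then closes the contradiction without ever invoking completion.

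What each route buys:
\begin{itemize}
\item Yours shows that this theorem, and hence the first proof of \cref{thm:selection-structure-helly}, holds for pairs $(\mathcal{K},\mathscr{L})$ satisfying only the connectedness axiom.
\item The paper's uses connectedness only at $W$, at the price of completion on all of $\mathscr{L}$.
\end{itemize}

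On your remark that completion alone cannot exclude small facets: that is true. The paper pairs completion with connectedness of $\mathcal{K}(W;D)$, and that is what rules out facets of $\mathcal{K}$ of dimension at most $d-1$.

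Your handling of $\sigma=\emptyset$ is also fine. There $\mathcal{K}[U']=\{\emptyset\}$ has empty realization, which plays the role of the paper's observation that facets of $\mathcal{K}$ are nonempty.
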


\begin{proof}[Proof of \cref{thm:selection-structure-helly}]
Let $X = N(\mathcal{F})$ be the nerve of the family $\ff$.  Since $X$ is $d$-collapsible, we apply \cref{thm:selection-structure-d-collapsible}, giving us the conclusion we needed.
\end{proof}

\begin{proof}[Proof of \cref{thm:selection-structure-d-collapsible}]
    The idea behind the proof is as follows.  Assume that the conclusion is false.  Then, we will use a sequence of $d$-collapses of $X$ to find a small maximal face $\sigma$ of $\mathcal{K}$.  Then, we will use this face to contradict the connectivity condition of the selection structure $\Sigma$.  We will do this by taking a parallel expansion (i.e., the sets $D_w$ for $w \in W$) in which the vertices of $\sigma$ are given two elements and the rest are given one element.  We will show that this forces non-trivial homology of dimension at most $d-1$.
    
    Now we start the proof.  Let $\mathcal{B} = \operatorname{Facets}(\mathcal{K})$.  Assume, for a contradiction, that $\mathcal{K} \subseteq X$.

    First, observe that if $\tau \in X$, then we must have $ W \setminus \tau \in \mathscr{L}$.  Otherwise, we would be able to apply the condition to $W' = \tau$ and reach a contradiction.  
    
    Now we use the $d$-collapsibility of $X$.  Choose a sequence of elementary $d$-collapses

    \[
X =X_0 \searrow X_1 \searrow X_2 \searrow \dots \searrow X_m = \emptyset.
\]
At each step we obtain $X_{i+1}$ by removing the set of faces
\[
[\sigma_i, \eta_i] := \{\tau \in X_i: \sigma_i \subseteq \tau \subseteq \eta_i\},
\]
where $\sigma_i$ is a face of dimension at most $d-1$ and $\eta_i$ is the unique containment-maximal face of $X_i$ that contains $\sigma_i$.

Let $i$ be the first index such that $[\sigma_i, \eta_i]$ contains a face of $\mathcal{K}$.  This implies that $\sigma_i \in \mathcal{K}$.  To simplify the notation in the rest of the proof, let $\sigma = \sigma_i, \eta = \eta_i$.

We have $\eta \setminus \sigma \in X$.  Therefore, $U = W \setminus (\eta \setminus \sigma) \in \mathscr{L}$ and $\sigma \subseteq U$.

If we apply the completion axiom to $U$, again with singleton fibers $D_w = \{w\}$, we find some $B_0 \in \mathcal{B}$ such that $\sigma \subseteq B_0 \subseteq U$.  Since $B_0 \in \mathcal{B}\subseteq \mathcal{K}$ and $i$ was the first index in which we remove a face of $\mathcal{K}$, we have $B_0 \in X_i$.

Moreover, since $\eta$ was the unique maximal face of $X_i$ that contained $\sigma$, we have $B_0 \subseteq \eta$.  However, $B_0 \subseteq W \setminus (\eta \setminus \sigma)$, which means that $B_0 \cap (\eta \setminus \sigma) = \emptyset$.  This means that $B_0$ is contained in $\eta$ but lacks any of the vertices of $\eta \setminus \sigma$, so $B_0 = \sigma$.  This translates simply to $\sigma \in \operatorname{Facets}(\mathcal{K})$.

We can rule out $\sigma = \emptyset$ since $\mathcal{K}$ is at least $(-1)$-connected and therefore non-empty, so its facets have at least one element.

Recall that $k:=\dim \sigma \le d-1$.  We now use $\sigma$ to contradict the connectivity condition of the selection structure $\Sigma$.

Choose sets $D_w$ for $w \in W$ such that
\[
|D_w| = \begin{cases}
    2 & \mbox{ if } w \in \sigma \\
    1 & \mbox{otherwise}.
\end{cases}
\]
In the complex $\mathcal{K}(W;D)$, the vertices of $\sigma$ induce the $(k+1)$-fold join of two points, which is homeomorphic to a $k$-dimensional sphere $S^{k}$.  Moreover, since $\sigma$ was a maximal face, no top-dimensional simplex of this copy of $S^k$ is properly contained in another simplex of $\mathcal{K}(W;D)$.  In particular, this copy of $S^k$ induces a non-zero element of the $k$-th reduced homology group of $\mathcal{K}(W;D)$, contradicting the fact that $\mathcal{K}(W;D)$ should be $(d-1)$-connected.  This is the contradiction we were looking for.
\end{proof}

As mentioned in the introduction, \cref{thm:selection-structure-helly} implies the Kalai--Meshulam matroidal Helly theorem.  Kalai and Meshulam proved two versions of this result.  One concerning convex sets, \cite{Kalai2005}*{Thm. 1.3}, which we explain here.  They also proved a strictly stronger second theorem, \cite{Kalai2005}*{Thm. 1.6}, for $d$-Leray simplicial complexes.  We explain how \cref{thm:selection-structure-d-collapsible} implies \cite{Kalai2005}*{Thm. 1.6} below.  Analogous arguments show that that our generalization to $d$-Leray complexes, \cref{thm:selection-structure-d-leray}, implies \cite{Kalai2005}*{Thm. 1.3}.

\begin{theorem}[Kalai, Meshulam 2005 \cite{Kalai2005}*{Thm. 1.3}]\label{thm:kalai}
    Let $d$ be a positive integer, $M$ be a matroid on a finite set $W$ of vertices, and $r_M$ be the rank function of $M$.  Let $X$ be the nerve complex of a finite family of convex sets in $\rr^d$.  Suppose that $\Ind(M) \subseteq X$.  Then, there exists a simplex $W' \in X$ such that $r_M(W \setminus W') \le d$.
\end{theorem}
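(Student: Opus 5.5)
The plan is to derive \cref{thm:kalai} from \cref{thm:selection-structure-d-collapsible}, which applies because the nerve $X$ of a finite family of convex sets in $\rr^d$ is $d$-collapsible, as recalled above. The natural selection structure is $\Sigma_M=(\Ind(M)^{(d)},\mathscr{L}_{M,d})$ from \cref{example:matroids}. We argue by contradiction: suppose $\Ind(M)\subseteq X$, but every $W'\in X$ satisfies $r_M(W\setminus W')\ge d+1$. Equivalently, whenever $W\setminus W'\notin\mathscr{L}_{M,d}$ we have $W'\notin X$, which is exactly the hypothesis of \cref{thm:selection-structure-d-collapsible}.

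\Cref{example:matroids} requires $r_M\ge d+1$, so we handle small rank first. If $r_M(W)\le d$, then taking $W'=\emptyset$ gives $r_M(W)\le d$. Here $\emptyset\in X$ as long as $X$ is nonempty, which holds since $\Ind(M)\subseteq X$ and $\emptyset\in\Ind(M)$. So the conclusion holds trivially. From now on assume $r_M(W)\ge d+1$, so that $\Sigma_M$ is a $d$-admissible selection structure on $W$.

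Applying \cref{thm:selection-structure-d-collapsible} to $X$ and $\Sigma_M$ gives a face $I\in\Ind(M)^{(d)}$ with $I\notin X$. But $\Ind(M)^{(d)}\subseteq\Ind(M)\subseteq X$, which is a contradiction. Hence some $W'\in X$ has $r_M(W\setminus W')\le d$.

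The only points needing care are bookkeeping. One is the degenerate rank case above. The other is the translation between the two conditions: $W\setminus W'\notin\mathscr{L}_{M,d}$ is, by definition of $\mathscr{L}_{M,d}$, the same as $r_M(W\setminus W')\le d$. I expect no real obstacle, since all the topological work is contained in \cref{thm:selection-structure-d-collapsible} and in the $d$-admissibility established in \cref{example:matroids}.
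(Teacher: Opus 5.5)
Your proposal is correct and follows the paper's proof essentially step for step. You dispose of the case of rank at most $d$ trivially; you take $W'=\emptyset$ where the paper takes a basis, and either choice works. You then apply \cref{thm:selection-structure-d-collapsible} to $\Sigma_M$ and the $d$-collapsible nerve, which contradicts $\Ind(M)^{(d)}\subseteq\Ind(M)\subseteq X$.
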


\begin{proof}[Proof that \cref{thm:selection-structure-helly} implies \cref{thm:kalai}]

If the rank of $M$ is at most $d$, we can take $W'$ to be basis of $M$.  Therefore we can assume that the rank of $M$ is at least $d+1$.

Suppose for a contradiction that the result fails.  Let $\Sigma_M = (\Ind(M)^{(d)}, \mathscr{L}_{M,d})$ from \cref{example:matroids}.  Since the conclusion of the theorem fails, whenever $W \setminus W' \not\in \mathscr{L}_{M,d}$, we have $r_M(W \setminus W') \le d$.  Therefore, $W' \not\in X$.  This implies we can use \cref{thm:selection-structure-d-collapsible}.  The conclusion contradicts the assumption $\Ind(M)^{(d)} \subseteq \Ind(M) \subseteq X$, proving the theorem.
    
\end{proof}

The next result is when $X$ is a $d$-Leray simplicial complex.  Recall that a simplicial complex $X$ with vertex set $W$ is $d$-Leray (over a field $\mathds{F}$ if for all $U \subseteq W$, the reduced homology groups $\tilde H_i(X[U], \mathds{F})$ are zero for every $i \ge d$.  In other words, the homology of every induced subcomplex vanishes in dimension at least $d$.  We can use $\mathds{F}=\mathds{Q}$ in the arguments that follow.

In particular, if $\mathcal{F}$ is a finite family of non-empty open sets in $\rr^d$ such that any intersection of them is either empty or contractible, its nerve complex is $d$-Leray.  This is the kind of families on which Kalai and Meshulam apply their results.  Additionally, $d$-collapsible complexes are $d$-Leray, which is why \cref{thm:selection-structure-d-leray} generalizes \cref{thm:selection-structure-d-collapsible}.  We now state the selection-structure Helly theorem for $d$-Leray simplicial complexes.   

\begin{theorem}\label{thm:selection-structure-d-leray}
    Let $W$ be a finite set and $X$ be a $d$-Leray simplicial complex with vertex set $W$.  Let $\Sigma=(\mathcal{K}, \mathscr{L})$ be a $d$-admissible selection structure on $W$.  Suppose that for every $W' \subseteq W$ such that $W \setminus W' \not\in \mathscr{L}$ we have $ W' \not \in X$.  Then, there exists $I \in \mathcal K$ such that $ I \not\in X$.
\end{theorem}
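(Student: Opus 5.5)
We argue by contradiction, as in the proof of \cref{thm:selection-structure-d-collapsible}. Assume $\mathcal{K}\subseteq X$. As there, the hypothesis gives the key dictionary: if $\tau\in X$ then $W\setminus\tau\in\mathscr{L}$. The collapsibility proof found a facet $\sigma$ of $\mathcal{K}$ of dimension at most $d-1$ and then contradicted connectivity via the doubled-fiber sphere. That last step is available verbatim here. We cannot hope to produce such a small facet in general, though: in the colorful case all facets have size $d+1$. So the contradiction must come from a homological mismatch between $X$ and a parallel expansion of $\mathcal{K}$.

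\textbf{The auxiliary complex.} Mimicking Kalai--Meshulam, we form the complex of pairs
\[
Z=\{\tau\sqcup\eta:\ \tau\in X,\ \eta\in\mathcal{K}(W;D),\ \tau\cap\src(\eta)=\emptyset\},
\]
a subcomplex of the join $X*\mathcal{K}(W;D)$. The fibers $D$ are chosen with $|D_w|=2$ for all $w$, so that $\mathcal{K}(W;D)$ has room to carry a nonzero $d$-cycle. We then compute $\tilde H_*(Z;\mathds{Q})$ in two ways.

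\textbf{Projection to $X$.} The fiber over $\tau\in X$ is $\mathcal{K}(W\setminus\tau;D)$. By the dictionary, $W\setminus\tau\in\mathscr{L}$, so this fiber is $(d-1)$-connected by the connectedness axiom, and this persists under the face poset of $X$. A Quillen fiber lemma or Mayer--Vietoris spectral sequence argument over the faces of $X$ then says that $Z$ is roughly as connected as $X*S^{d-1}$-type joins allow. In particular, the low-degree homology of $Z$ is controlled by that of $X$.

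\textbf{Projection to $\mathcal{K}(W;D)$.} The fiber over $\eta$ is $X[W\setminus\src(\eta)]$. Since $X$ is $d$-Leray, these induced subcomplexes have no homology in degrees $\ge d$. Because $\mathcal{K}\subseteq X$, the diagonal $\{\src(\eta)\sqcup\eta\}$-type faces produce a nonzero class: the $d$-cycle of $\mathcal{K}(W;D)$ coming from the doubled fibers over a facet $B$, joined against the simplex $B\in X$ itself. The aim is to show this class must survive in one computation but vanish in the other.

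\textbf{Main obstacle.} The delicate step is making the two spectral sequences meet in the same degree. The Leray bound (degree $\ge d$) and the admissibility bound ($(d-1)$-connected) must be used simultaneously, exactly as in Kalai and Meshulam's proof of their Theorem 1.6, with matroid rank replaced by $\mathscr{L}$. If the complex of pairs proves unwieldy, the first fallback I would try is an induction on $|W|$. It would delete a vertex $w$ and pass to links, using that induced subcomplexes and links of a $d$-Leray complex are $d$-Leray, and that $X$ becomes $(d-1)$-Leray after taking a link. Checking that the selection structure restricts suitably under such deletion or contraction, including the completion axiom, would then be the crux.
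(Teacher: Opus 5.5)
Your proposal has a genuine gap. The step that is supposed to produce the contradiction is never carried out, and the parts you do describe do not fit together.

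Every face $\tau\sqcup\eta$ of your complex $Z$ satisfies $\tau\cap\src(\eta)=\emptyset$. So the ``diagonal'' faces $B\sqcup\eta$ with $\src(\eta)=B$, which are meant to carry your nonzero class, are not faces of $Z$ at all. Consequently the contradiction hypothesis $\mathcal{K}\subseteq X$ is never used: your two fiber computations only use the dictionary and the Leray property.

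There are further problems. The axioms do not force facets of $\mathcal{K}$ to have exactly $d+1$ elements; for instance, the full simplex on $|W|\ge d+2$ vertices with $\mathscr{L}=\{W\}$ is $d$-admissible. So the doubled-fiber sphere over a facet need not sit in degree $d$. Also, the claim that ``$Z$ is roughly as connected as $X*S^{d-1}$-type joins allow'' is not a precise degree bound that could be played against vanishing in degrees $\ge d$. Finally, the fallback rests on a false lemma. Links in a $d$-Leray complex are $d$-Leray, but not $(d-1)$-Leray in general: the path $a$--$b$--$c$ is $1$-Leray, while the link of $b$ is two isolated points, which is not $0$-Leray.

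The missing idea is that no new spectral-sequence argument is needed. The paper derives the statement directly from a Kalai--Meshulam-type topological theorem, quoted as \cref{thm:Blagojevic-new} (Blagojevi\'c--Karasev--Zsigri). It says: if $X$ is $d$-Leray on $W$ and $\mathcal{K}[W\setminus\sigma]$ is homologically $(d-1)$-connected for every $\sigma\in X$, then $\mathcal{K}\not\subseteq X$.

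You already have both inputs in your first paragraph. Your dictionary gives $W\setminus\sigma\in\mathscr{L}$ for every $\sigma\in X$. The connectedness axiom with singleton fibers $D_w=\{w\}$ identifies $\mathcal{K}(W\setminus\sigma;D)$ with $\mathcal{K}[W\setminus\sigma]$ and makes it $(d-1)$-connected. That is the entire proof; doubled fibers and the completion axiom play no role. A self-contained version of your approach would amount to reproving that theorem, and your sketch stops exactly where that proof would have to begin.
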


The proof is based on a very recent result by Blagojevi\'c, Karasev, and Zsigri \cite{blagojevic2026cone}.  In their work, the Blagojevi\'c et al present extensions of Carath\'eodory's theorem that generalize resutls by Holmsen, Kalai, and Meshulam.  In particular, the result relevant for this proof, \cite{blagojevic2026cone}*{Thm. 3.2}, extends earlier methods by Meshulam that show the existence of rainbow simplices in simplicial complexes where the vertices are assigned colors.  We state the relevant result here for completeness, but highlight that the statement by Blagojevi\'c, Karasev, and Zsigri is more general (one of the conditions can be modified).  We also apply directly a change of variables to make the result below match the parameters of \cref{thm:selection-structure-d-leray}.

\begin{theorem}[Blagojevi\'c, Karasev, Zsigri 2026 \cite{blagojevic2026cone}*{Thm. 3.2}]\label{thm:Blagojevic-new}
    Let $d$ be a positive integer, $W$ be a non-empty set, $X$ be a $d$-Leray complex with ground set $W$.  We know that for all $\sigma \in X$, we have that $\mathcal{K}[W \setminus \sigma]$ is homologically $(d-1)$-connected.  Then, $\mathcal{K} \not\subseteq X$.
\end{theorem}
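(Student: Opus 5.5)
The plan is to argue by contradiction, following Meshulam's method of comparing two projections of an auxiliary ``bipartite'' complex. Suppose $\mathcal{K}\subseteq X$. Take a disjoint copy $W'=\{w':w\in W\}$ of $W$ and define the complex $Z$ on $W\sqcup W'$ whose faces are the sets $\sigma\sqcup\tau'$ with $\sigma\in X$, $\tau\in\mathcal{K}$ and $\sigma\cap\tau=\emptyset$. Then $Z$ is a subcomplex of the join $X*\mathcal{K}'$, and it carries the two coordinate projections $p_X$ and $p_{\mathcal{K}}$. The strategy is to compute $\tilde H_*(Z;\mathbb{F})$ through each projection and obtain incompatible answers.

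\emph{Projection onto $X$.} The fiber of $p_X$ over a face $\sigma\in X$, taken over the closed star of $\sigma$, is essentially $\mathcal{K}[W\setminus\sigma]$. By hypothesis this complex is homologically $(d-1)$-connected for every $\sigma\in X$. I would feed this into a Leray/Quillen-type spectral sequence for the simplicial map $p_X$, or equivalently a Mayer--Vietoris induction adding the faces of $X$ one at a time in order of increasing size. The target conclusion is $\tilde H_i(Z)\cong\tilde H_i(X)$ for $i\le d-1$. The only input needed is the fiberwise connectivity, in the same spirit as the inductive extension arguments over skeleta used earlier in the paper.

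\emph{Projection onto $\mathcal{K}$.} The fiber of $p_{\mathcal{K}}$ over $\tau\in\mathcal{K}$ is the induced complex $X[W\setminus\tau]$, which has vanishing homology in degrees $\ge d$ because $X$ is $d$-Leray. Here the assumption $\mathcal{K}\subseteq X$ enters. For a facet $\tau$ of $\mathcal{K}$, the simplex $\tau$ lies in $X$, and one can compare $\tau$ with its ``shadow'' $\tau'$. Concretely, I would use the diagonal map $\mathcal{K}\to Z$ defined by sending each vertex $w\mapsto w$ or $w\mapsto w'$. Then $\partial(\tau*\tau')$, the boundary of the cross-polytope on $\tau\sqcup\tau'$ (a $(\dim\tau)$-sphere), has to sit in $Z$, because every face of it avoids using some $w$ together with its copy $w'$. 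By combining the Leray vanishing with the connectivity of $\mathcal{K}$ itself (take $\sigma=\emptyset$ in the hypothesis), I expect to show that this sphere class is nonzero in $\tilde H_{*}(Z)$ in some degree at most $d-1$, while the corresponding homology of $X$ is forced to be trivial.

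\emph{Main obstacle.} The hard step will be the degree bookkeeping. I need to identify exactly which homology group of $Z$ is forced to be nonzero by $\mathcal{K}\subseteq X$ through the $p_{\mathcal{K}}$ side, and show that the $p_X$ side kills it. My first attempt would be to run the spectral sequence of $p_{\mathcal{K}}$ with the Leray condition truncating all rows of degree $\ge d$. If the degrees do not match up directly, the fallback is to induct on $|W|$: delete a vertex $v$, use that $\operatorname{lk}_X(v)$ is $(d-1)$-Leray and that the hypothesis passes to $\mathcal{K}[W\setminus v]$ and to the link, and then glue the pieces with Mayer--Vietoris, as in Kalai--Meshulam's original argument.
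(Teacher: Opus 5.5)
The paper does not prove this statement itself. It imports it from \cite{blagojevic2026cone}*{Thm.~3.2}, whose argument extends Meshulam's rainbow-simplex method, so your proposal has to stand on its own. It does not: the step that would produce the contradiction is missing, and the claims on either side of it are incorrect.

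\emph{The $p_X$ side.} Fiberwise connectivity does not give $\tilde H_i(Z)\cong\tilde H_i(X)$. Filter $Z$ by $|\sigma|$; the face $\sigma=\emptyset$ is included and contributes $\mathcal{K}'\subseteq Z$. This gives $E^1_{p,q}=\bigoplus_{\dim\sigma=p}\tilde H_q(\mathcal{K}[W\setminus\sigma])$ in total degree $p+q+1$, with $p\ge -1$. So the hypothesis yields $\tilde H_i(Z)=0$ for $i\le d-1$, independently of $X$. For instance, if $\mathcal{K}=2^W$ and $X$ is any complex with $W\notin X$, then $Z$ is acyclic.

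\emph{The $p_{\mathcal{K}}$ side.} It cannot deliver a low-degree class from the inputs you list. Take $X=\mathcal{K}=2^W$ with $|W|\ge d+1$. Then $X$ is $d$-Leray, $\mathcal{K}\subseteq X$, and $\mathcal{K}$ is contractible. Here $Z$ is the boundary of the cross-polytope, $Z\cong S^{|W|-1}$, which has no reduced homology in degrees $\le d-1$. The sphere $\partial(\tau*\tau')$ of the facet $\tau=W$ sits in degree $|W|-1\ge d$.

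More generally, facets of $\mathcal{K}$ need not have dimension $\le d-1$. Even when they do, the cross-polytope sphere can bound in $Z$, because the $X$-coordinate of its top cells can be enlarged by vertices outside $\tau$.

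In the example above, the hypothesis fails only at $\sigma=W$. In the $p_X$ spectral sequence, the hypothesis at a face $\sigma$ governs only the total degrees $|\sigma|-1,\dots,|\sigma|+d-1$ of $Z$. So the contradiction has to come from the hypothesis at large faces of $X$, in high degrees. Connecting that to $\mathcal{K}\subseteq X$ is exactly the content you have deferred to ``degree bookkeeping'', and the proposal contains no mechanism for it.

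\emph{The fallback induction.} It rests on two false steps.
\begin{enumerate}
\item Links of a $d$-Leray complex are $d$-Leray, not in general $(d-1)$-Leray. The path $1-v-2$ is $1$-Leray, but $\operatorname{lk}(v)$ is two points. No clever choice of $v$ repairs this: a triangulated dunce hat is $2$-Leray, since for a $2$-complex $H_2$ of any subcomplex injects into $H_2$ of the whole, which is zero. Yet every vertex link is a graph of minimum degree at least $2$, so it has $\tilde H_1\neq 0$.
\item The hypothesis does not pass to the deletion. For $\sigma\in X[W\setminus v]$ you need $\mathcal{K}[W\setminus(\sigma\cup\{v\})]$ to be homologically $(d-1)$-connected, and the hypothesis gives this only when $\sigma\cup\{v\}\in X$.
\end{enumerate}
A local handle of this kind is what $d$-collapsibility supplies in the proof of \cref{thm:selection-structure-d-collapsible}: there one finds a small face with a unique maximal coface. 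For general $d$-Leray complexes the paper has no such argument and relies on \cite{blagojevic2026cone} instead.
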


\begin{proof}[Proof of \cref{thm:selection-structure-d-leray}]
    We may assume without loss of generality that $W$ is the set of vertices of $\mathcal{K}$.  If $\mathcal{K}$ does not use all of $W$, we can consider $V$ its set of vertices, and denote $X' = X[V]$, which is also $d$-Leray, and the arguments below follow.
    
    Since $\mathscr{L}$ is not empty and it is upwards-closed, $W \in \mathscr{L}$. In particular, if we take singleton fibers $D_w = \{w\}$, we have that $\mathcal{K}(W;D) = \mathcal{K}$ is $(d-1)$-connected, and in particular must not be empty.

    We have to show the homological connectivity condition of \cite{blagojevic2026cone}*{Thm. 3.2}.  Let $\sigma \in X$ be a face.  We claim that $W \setminus \sigma \in \mathscr{L}$.  Otherwise, we can take $W' = \sigma$ in the hypothesis and obtain that $\sigma \not\in X$.

    Now consider again singleton fibers $D_w = \{w\}$, we have that $\mathcal{K}(W\setminus \sigma; D) = \mathcal{K}[W \setminus \sigma]$ is $(d-1)$-connected for every $\sigma \in X$.  Therefore, by \cref{thm:Blagojevic-new}, $\mathcal{K} \not\subseteq X$.  This implies the existence of some face $I \in \mathcal{K}$ such that $I \not\in X$, as we wanted to prove.
\end{proof}

\section{Selection structure Goodman--Pollack transversals}
\label{sec:selection-structure-goodman-pollack}

We isolate the two features of Sadovek’s proof needed to extend to matroids.  The proof uses a Stiefel-manifold
Borsuk--Ulam-type theorem, together with an auxiliary space built from the
dependency condition.  The information from the matroid enters through two features: the high
connectivity of the relevant parallel extensions, and the fact that faces extend
to bases.  These are precisely the two axioms in the definition of an
admissible selection structure.

We now introduce the definitions and notation needed to state \cref{thm:selection-structure-sadovek}.  As Sadovek's theorem holds for affine transversals in real spaces and in complex spaces, we prove the selection structure version on this level of generality.

Let \(\FF\in\{\rr,\cc \}\).  An \(\FF\)-affine \(k\)-flat in
\(\FF^d\) is an \(\FF\)-\(k\)-transversal to a family of convex sets if it
intersects every member of the family.  One reason that complex transversals are combinatorially interesting is that even though $\cc^d \cong \rr^{2d}$, not every real affine $2k$-dimensional space of $\rr^{2d}$ corresponds to an affine complex $k$-dimensional space of $\cc^d$.  The following definition is one of the central points that McGinnis and Sadovek used to characterize the existence of transversals to families of convex sets.

\begin{definition}
\label{def:selection-structure-models-dependencies}
Let \(0\le r\le k<d\), let \(m=d-k\), and let \( \ff=\{F_w:w\in W\} \) be a finite family of compact convex sets in \(\FF^d\).  Let
\(\Sigma=(\mathcal{K},\mathscr{L})\) be a selection structure on
\(W\) and let $\mathcal{B} = \operatorname{Facets}(\mathcal{K})$.  A map
\[
        \varphi:W\longrightarrow \FF^r
\]
models \((d-k,\Sigma,\FF)\)-dependencies of \(\ff\) if the following
condition holds.

For every \(B\in\mathcal B\) and every nontrivial system of \(m=d-k\)
affine \(\FF\)-dependencies
\[
        \sum_{w\in B} a_{j,w}=0,
        \qquad
        \sum_{w\in B} a_{j,w}\varphi(w)=0
        \qquad
        \text{for }j=1,\dots,m,
\]
where not all \(a_{j,w}\) are zero, there exist real numbers
\(\lambda_w\ge 0\) and points \(q_w\in F_w\), \(w\in B\), such that
\[
        \sum_{w\in B} \lambda_w a_{j,w}=0,
        \qquad
        \sum_{w\in B} \lambda_w a_{j,w}q_w=0
        \qquad
        \text{for }j=1,\dots,m,
\]
and the new system is still nontrivial, i.e. not all
\(\lambda_w a_{j,w}\) are zero.
\end{definition}

To prove \cref{thm:selection-structure-sadovek}, we use the following Borsuk--Ulam-type theorem for Stiefel manifolds.  Let \( V_m(\FF^{d+1}) \)
be the Stiefel manifold of orthonormal \(m\)-frames
\(v=(v_1,\dots,v_m)\) in \(\FF^{d+1}\).  We use the standard action of
\((\ZZ_2)^m\) on \(V_m(\FF^{d+1})\). To do this, we consider $\ZZ_2 = \{-1,1\}$ with multiplication, and for $(v_1,\dots,v_m) \in V_m(\FF^{d+1})$ and $(\lambda_1,\dots, \lambda_{m}) \in (\ZZ_2)^m$, we take
\[
(\lambda_1,\dots, \lambda_{m}) (v_1,\dots,v_m) =(\lambda_1 v_1,\dots, \lambda_{m}v_{m}).
\]

We use the following theorem by Sadovek.  This is a generalization of earlier Borsuk--Ulam theorems for Stiefel manifolds \cites{Chan2020, Sadovek2026}.

\begin{theorem}[Sadovek \cite{sadovek2026colorful}*{Thm. 3.1}]
\label{thm:stiefel-obstruction}
Let \(0\le r\le k<d\), let \(\FF\in\{\rr,\cc\}\) and \( N=\dim_{\rr}(\FF)\,(d-k)(r+1)\).
Endow the Stiefel manifold \( V_{d-k}(\FF^{d+1}) \) with a \((\zz_2)^{d-k}\)-CW structure compatible with the action described above.  Then, there is no continuous \((\zz_2)^{d-k}\)-equivariant map
\[
        \Phi:\operatorname{sk}_{N} V_{d-k}(\FF^{d+1})
        \longrightarrow
        S\bigl((\FF^{r+1})^{d-k}\bigr),
\]
where the \(j\)-th factor of \((\zz_2)^{d-k}\) acts antipodally on the
\(j\)-th copy of \(\FF^{r+1}\) and trivially on the other copies.
\end{theorem}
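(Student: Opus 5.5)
The plan is to reduce the statement to the non-vanishing of a single characteristic class, using equivariant obstruction theory. Write $m=d-k$ and $G=(\ZZ_2)^m$. Let $E=(\FF^{r+1})^m$ with the stated $G$-action, so $\dim_{\rr}E=N$ and $S(E)\cong S^{N-1}$. The sphere $S^{N-1}$ is $(N-2)$-connected. The action of $G$ on $V_m(\FF^{d+1})$ is free, since $v_j\mapsto -v_j$ fixes no unit vector. Hence a $G$-map from the $(N-1)$-skeleton to $S(E)$ always exists, and a $G$-map from $\operatorname{sk}_N$ exists if and only if the primary obstruction vanishes. This obstruction lies in $H^N_G(V_m(\FF^{d+1});\underline{\pi_{N-1}S(E)})$ and does not depend on the choice of map on lower skeleta, since any two such maps are $G$-homotopic on the $(N-2)$-skeleton. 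By the standard identification, this obstruction is the (twisted) Euler class of the vector bundle
\[
\xi = V_m(\FF^{d+1})\times_G E \longrightarrow V_m(\FF^{d+1})/G .
\]
So it suffices to show $e(\xi)\neq 0$. I would check this mod $2$, that is, show $w_N(\xi)\neq 0$ in $H^N(V_m(\FF^{d+1})/G;\FF_2)$.

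Next I would compute $w_N(\xi)$. Let $L_j$ be the real line bundle associated with the $j$-th sign character of $G$. Then $\xi\cong\bigoplus_{j=1}^m L_j\otimes_{\rr}\FF^{r+1}$, which is a sum of $\dim_{\rr}(\FF)(r+1)$ copies of each $L_j$. Setting $x_j=w_1(L_j)$, this gives
\[
w_N(\xi)=\prod_{j=1}^m x_j^{\dim_{\rr}(\FF)(r+1)} .
\]
It remains to show this monomial is nonzero in the cohomology of the quotient.

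In the real case, $V_m(\rr^{d+1})/G$ is the space of ordered $m$-tuples of pairwise orthogonal lines in $\rr^{d+1}$. I would compute its mod $2$ cohomology by iterating projective-bundle fibrations. The first line is a point of $\rr P^{d}$, the second a point of the projectivised orthogonal complement (a copy of $\rr P^{d-1}$), and so on. Leray--Hirsch shows that the monomials $x_1^{a_1}\cdots x_m^{a_m}$ with $a_j\le d+1-j$ form an additive basis, with the relations given by Stiefel--Whitney classes of the tautological complements. Since $r+1\le k+1=d+1-m\le d+1-j$ for all $j\le m$, the monomial $\prod_j x_j^{r+1}$ is one of these basis elements, hence nonzero.

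The complex case is where I expect the main obstacle. Now $G$ acts only through $\{\pm1\}\subset S^1$, so the quotient fibres over the complex flag-type manifold of ordered orthogonal complex lines, with fibre $(S^1/\ZZ_2)^m\cong (S^1)^m$. The mod $2$ cohomology of this quotient is not a truncated polynomial ring in the $x_j$: one has $x_j^2=$ the reduction of $c_1$ of the $j$-th tautological line, up to a correction from the fibre classes. I would first try a Serre spectral sequence (Fadell--Husseini index) argument. This means computing the kernel of $H^*(BG;\FF_2)=\FF_2[x_1,\dots,x_m]\to H^*_G(V_m(\cc^{d+1});\FF_2)$, which should be the ideal generated by the analogues of the complete homogeneous symmetric polynomials in the $x_j^2$ of degrees $2(d+1-m+1),\dots$. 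The goal is to show that $\prod_j x_j^{2(r+1)}$ is not in this ideal, using again $r+1\le d+1-m$. If the mod $2$ computation becomes unwieldy, the fallback is integral: work with the complex line bundles $\ell_j\otimes\ell_j$ on the quotient, replace the sign characters by their complexifications, and show the integral Euler class $\prod_j c_1(\cdot)^{r+1}$ is nonzero via the rational cohomology of the flag manifold and a transfer argument.
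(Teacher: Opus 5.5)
The paper gives no proof of this statement; it is quoted from Sadovek, so your proposal has to stand on its own.

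\textbf{What works.} Your reduction is sound. Because the action is free, a $G$-map $\operatorname{sk}_N V_m(\FF^{d+1})\to S(E)$ is a nowhere-zero section of $\xi$ over the $N$-skeleton of the quotient, so $w_N(\xi)\neq 0$ rules it out. You can bypass obstruction theory entirely, which also handles the degenerate case $N=1$, $S(E)=S^0$: a nowhere-zero section splits off a trivial line, so $w_N(\xi)$ restricts to zero on $\operatorname{sk}_N$, and $H^N(X;\FF_2)\to H^N(\operatorname{sk}_N X;\FF_2)$ is injective. The formula $w_N(\xi)=\prod_j x_j^{\dim_\rr(\FF)(r+1)}$ is correct. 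So is the real case via the iterated Leray--Hirsch basis $x_1^{a_1}\cdots x_m^{a_m}$ with $a_j\le d+1-j$, and that case is complete.

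\textbf{The gap: the complex case is not proved.} You leave the kernel at ``should be'', and your integral fallback cannot succeed. Let $\lambda_j$ be the tautological line on $F_\cc=V_m(\cc^{d+1})/(S^1)^m$ and $\pi\colon Q=V_m(\cc^{d+1})/G\to F_\cc$ the projection. Then $L_j\otimes_\rr\cc\cong\pi^*\lambda_j$, since $\rr v_j$ spans $\cc v_j$. Hence $\xi\cong\bigoplus_j(\pi^*\lambda_j)^{\oplus(r+1)}$ as a complex bundle, with Euler class $\prod_j c_1(\pi^*\lambda_j)^{r+1}$. This class is $2$-torsion, because $\pi^*\lambda_j^{\otimes 2}$ is trivialized by $[v]\mapsto v_j\otimes v_j$. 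So no rational or transfer argument can detect the obstruction.

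\textbf{How to close the mod $2$ route.}
\begin{enumerate}
\item The same isomorphism gives $x_j^2=w_2(L_j\oplus L_j)=\pi^*y_j$ exactly, where $y_j=c_1(\lambda_j)\bmod 2$. Hence $w_N(\xi)=\pi^*\bigl(\prod_j y_j^{r+1}\bigr)$.
\item Via $[v]\mapsto(v_j\otimes v_j)_j$, the map $Q\to F_\cc$ is the fibre product of the circle bundles $S(\lambda_j^{\otimes 2})$. Their mod $2$ Euler classes $2c_1(\lambda_j)$ vanish, so the Gysin sequence, applied one circle factor at a time, makes $\pi^*$ injective on $H^*(-;\FF_2)$.
\item $F_\cc$ is an iterated $\cc P^{d+1-j}$-bundle, so Leray--Hirsch gives the basis $y_1^{a_1}\cdots y_m^{a_m}$, $a_j\le d+1-j$, of $H^*(F_\cc;\FF_2)$. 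The monomial $\prod_j y_j^{r+1}$ is one of these basis elements because $r+1\le d+1-m$, so $w_N(\xi)\neq 0$.
\end{enumerate}
This argument also confirms your conjectured kernel: by Leray--Hirsch, $H^*(Q;\FF_2)$ is free over $H^*(F_\cc;\FF_2)$ on the squarefree monomials in the $x_j$, so the kernel is generated by the complete homogeneous relations in the $x_j^2$. You do not need that description to finish the proof.
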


\begin{theorem}[Selection structure Goodman--Pollack theorem]
\label{thm:selection-structure-sadovek}
Let \(0\le r\le k<d\), let \(\FF\in\{\rr,\cc\}\), and put \( N=\dim_{\rr}(\FF)\,(d-k)(r+1)\).
Let \( \ff=\{F_w:w\in W\} \)
be a finite family of compact convex sets in \(\FF^d\), and let
\(\Sigma=(\mathcal{K},\mathscr{L}) \)
be an \(N\)-admissible selection structure on \(W\).  Suppose that there exists
a map
\[
        \varphi:W\longrightarrow \FF^r
\]
that models \((d-k,\Sigma,\FF)\)-dependencies of \(\ff\).  Then there
exists \(W'\subseteq W\) such that the family
\( \{F_w:w\in W'\} \)
has an \(\FF\)-\(k\)-transversal and
\( W\setminus W'\notin\mathscr{L}\).
\end{theorem}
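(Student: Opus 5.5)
We argue by contradiction, following the structure of Sadovek's proof and of the first proof of \cref{thm:selection-structure-colorful-caratheodory}. Suppose that for every \(W'\subseteq W\) with \(W\setminus W'\notin\mathscr L\), the family \(\{F_w:w\in W'\}\) has no \(\FF\)-\(k\)-transversal. The goal is to build a \((\zz_2)^{d-k}\)-equivariant map \(\Phi:\operatorname{sk}_N V_{d-k}(\FF^{d+1})\to S((\FF^{r+1})^{d-k})\), which contradicts \cref{thm:stiefel-obstruction}.

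\textbf{Step 1: frames and the sets they favour.} Write \(m=d-k\) and \(\widehat F_w=\{1\}\times F_w\subseteq\FF^{d+1}\). A frame \(v=(v_1,\dots,v_m)\) determines the affine \(k\)-flat \(\{x:\langle v_j,(1,x)\rangle=0\ \forall j\}\), possibly at infinity. For each \(v\), let \(U(v)\) be the set of \(w\) such that \(F_w\) comes close to this flat in a suitable sign pattern; concretely, some point \(q\in F_w\) has all \(\langle v_j,(1,q)\rangle\) in a small neighbourhood of a prescribed ``half'' region. Compactness gives a uniform \(\varepsilon>0\) measuring the failure of transversality. This \(\varepsilon\) plays the role it played in the Carathéodory proof and is used to show \(U(v)\in\mathscr L\). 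Indeed, if \(U(v)\notin\mathscr L\), then \(W'=W\setminus U(v)\) satisfies \(W\setminus W'\notin\mathscr L\), so \(\{F_w:w\in W'\}\) has no transversal. By definition of \(U(v)\), however, the sets in \(W'\) avoid the flat of \(v\) uniformly, and we would derive a contradiction, as in the Carathéodory argument. I expect some care to be needed here in choosing the right sign/threshold conditions so that this works coordinatewise for all \(m\) functionals simultaneously.

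\textbf{Step 2: the auxiliary map into a parallel expansion.} Use finite fibers \(D_w\): finitely many points of \(F_w\), obtained from a finite open cover of the compact Stiefel manifold by neighbourhoods \(O_v\) as in the Carathéodory proof. Take a fine equivariant triangulation of \(\operatorname{sk}_N V_m(\FF^{d+1})\). To each simplex \(\theta\) assign \(\Gamma(\theta)=\mathcal K(U_\theta;D^\theta)\) with \(U_\theta\in\mathscr L\). By \(N\)-admissibility, \(\Gamma(\theta)\) is \((N-1)\)-connected, and since the cells have dimension at most \(N\), a map \(s\) with \(s(\theta)\subseteq|\Gamma(\theta)|\) extends skeleton by skeleton. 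Equivariance is handled by constructing \(s\) on orbit representatives of cells and making the \(\Gamma\) assignment invariant. The sign action does not change which points are chosen; only the sign test does, and we symmetrize by taking unions over the orbit. A point of \(s(v)\) is a convex combination \(\sum\mu_{(w,q)}(w,q)\) with \(\operatorname{src}\) a face of \(\mathcal K\), and by completion it extends to a facet \(B\in\mathcal B\).

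\textbf{Step 3: the test map.} Define \(\Phi(v)\in(\FF^{r+1})^m\) by setting its \(j\)-th component to
\[
\sum_{(w,q)}\mu_{(w,q)}(v)\,\langle v_j,(1,q)\rangle\,(1,\varphi(w)).
\]
This map is odd in \(v_j\) and invariant in the other coordinates, so it is equivariant. It remains to show \(\Phi\) has no zero. A zero would give, on a facet \(B\), the coefficients \(a_{j,w}=\sum_q\mu_{(w,q)}\langle v_j,(1,q)\rangle\), which satisfy \(\sum_w a_{j,w}=0\) and \(\sum_w a_{j,w}\varphi(w)=0\) for all \(j\). If this system is nontrivial, the modeling hypothesis yields \(\lambda_w\ge0\) and \(q_w\in F_w\) with \(\sum_w\lambda_wa_{j,w}(1,q_w)=0\). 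Pairing with the frame and using the sign constraints from Step 1, where every \(q\) used has \(\langle v_j,(1,q)\rangle\) of controlled sign and size, gives a positive sum that must vanish, which is a contradiction. If the system is trivial, every \(a_{j,w}=0\); then the points used lie within \(\varepsilon\) of the flat, and the \(\varepsilon\)-separation is contradicted.

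The main obstacle is Step 3, the no-zero argument. Reconciling the modeling condition's arbitrary replacement points \(q_w\) with the sign control of Step 1 requires choosing \(U(v)\) so that \emph{every} point of \(F_w\) for \(w\in U(v)\), not just the fiber points, behaves well relative to \(v\). I would first try a Sadovek-style choice, letting the fiber points be nearest points to the flat and using the convexity of \(F_w\) to pass from \(q_w\) back to these points.
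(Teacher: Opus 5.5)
Your Step 1 has the polarity reversed, and this is a genuine gap rather than a matter of tuning thresholds. Under your contradiction hypothesis, the only way to rule out $U(v)\notin\mathscr L$ is to exhibit a $k$-transversal of $\{F_w:w\in W\setminus U(v)\}$. The only available candidate is the flat determined by $v$. Hence $U(v)$ must consist of the sources whose lifted sets \emph{miss} that flat: with $L(v)=\operatorname{span}_{\FF}\{v_1,\dots,v_m\}^{\perp}$, set $U(v)=\{w:\widehat F_w\cap L(v)=\emptyset\}$. Then every $F_w$ with $w\notin U(v)$ meets the $k$-flat $L(v)\cap(\FF^d\times\{1\})$, which contradicts the hypothesis directly (if that slice is empty, then $U(v)=W\in\mathscr L$ anyway). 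No $\varepsilon$ is needed. With your definition, $U(v)$ collects the sets that come close to the flat, so the complement consists of sets avoiding it. That is entirely consistent with having no transversal, so nothing is contradicted. Your own Step 3 requirement, that all of $F_w$ behave well relative to $v$ for $w\in U(v)$, also forces $F_w$ to miss the flat. As written, Steps 1 and 3 pull in opposite directions, and the corrected $U(v)$ serves both.

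The second missing idea is the form of the sign control. It cannot be coordinatewise: for $m\ge 2$, the projection of $\widehat F_w$ to $\operatorname{span}\{v_1,\dots,v_m\}$ can miss the origin while every $\langle v_j,\widehat q\rangle$ changes sign on $F_w$. What suffices is a single strict separation. For $w\in U(v)$ that projection is a compact convex set avoiding $0$, so some unit $z\in\FF^m$ satisfies $\operatorname{Re}\sum_j z_j\langle v_j,\widehat q\rangle>0$ for \emph{all} $q\in F_w$, and this is an open condition in $v$. After the modeling condition gives $\lambda_w\ge 0$ and $q'_w\in F_w$, combine the two equations into $\sum_w\lambda_w a_{j,w}\widehat{q'_w}=0$, pair with $v_j$, sum over $j$, and take real parts. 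Every term is nonnegative and some is positive. Because the separation holds on all of $F_w$, the arbitrary replacement points cause no trouble.

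The paper takes these unit vectors $z$ themselves as the fiber elements and maps $\skel_N V_m(\FF^{d+1})$ into a frame-independent subspace $X$ of the join of copies of $Z_m(\FF)$. Its test map is $\bigl(\sum_w t_w(z_{w,j}\varphi(w),z_{w,j})\bigr)_{j}$. Nontriviality of the dependency system is then automatic from $\|z_w\|=1$, so there is no trivial case.

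Your nearest-point variant also goes through once $U(v)$ is fixed. Write $\pi_v$ for the orthogonal projection onto $\operatorname{span}\{v_1,\dots,v_m\}$, and choose $q^v_w\in F_w$ with $\pi_v(\widehat{q^v_w})$ nearest the origin. Then $\operatorname{Re}\langle\pi_v\widehat{q^v_w},\pi_v\widehat q\rangle\ge\|\pi_v\widehat{q^v_w}\|^2>0$ on $F_w$, and this is stable under perturbation of the frame (so nontriviality is again automatic). Use $a_{j,w}=\mu_w\overline{\langle v_j,\widehat{q_w}\rangle}$; the conjugate is needed over $\cc$. Since $L(gv)=L(v)$, all choices are $(\zz_2)^m$-invariant, so $s$ is invariant and your $\Phi$ is equivariant. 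This differs from the paper only in that you freeze the point and recompute the direction at the current frame, whereas the paper freezes the separating direction.
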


Before proving \cref{thm:selection-structure-sadovek}, let us show that it implies \cref{thm:selection-structure-helly}.

\begin{proof}[Second proof of \cref{thm:selection-structure-helly}]
    Since $W$ is finite, we can assume without loss of generality that all convex sets in the family $\{F_w: w \in W\}$ are compact.  Let $\mathcal{B} = \operatorname{Facets}(\mathcal{K})$.  We prove the contrapositive, and assume that for all $B \in \mathcal{B}$ we have $\bigcap_{w\in B}F_w \neq \emptyset$.  We set $k=r=0$.  Then, the constant map $\varphi: W \to \rr^{0}=\{0\}$ models $(d,\Sigma,\rr)$-dependencies of $\ff$.

    To see this, for every $B \in \mathcal{B}$ and any set of coefficients $a_{j,w}$ such that $\sum_{w \in B}a_{j,w}=0$, it is enough to take some $q_B \in \bigcap_{w \in B} F_w$ and select the points $q_w = q_B \in F_w$ for all $w \in B$.

    We can apply \cref{thm:selection-structure-sadovek} and conclude that there exists some $W' \subseteq W$ such that $\bigcap_{w \in W'}F_w \neq \emptyset$ and $W \setminus W' \not\in \mathscr{L}$, as we wanted to prove.
\end{proof}

\begin{proof}[Proof of \cref{thm:selection-structure-sadovek}]
The idea behind the proof is to assume that the conclusion of \cref{thm:selection-structure-sadovek} fails, and use this to construct a map ${\Phi:\operatorname{sk}_{N} V_{d-k}(\FF^{d+1}) \rightarrow S\bigl((\FF^{r+1})^m\bigr)}$ that contradicts \cref{thm:stiefel-obstruction} for some sufficiently fine triangulation of $V_{d-k}(\FF^{d+1})$.  We will use the $N$-admissibility of $\Sigma$ to construct $\Phi$ inductively on the skeleta of $V_{d-k}(\FF^{d+1})$.  We define \(m=d-k\).

We argue by contradiction.  Assume that the conclusion is false.  For
every \(T\subseteq W\),

\begin{equation}\label{eq:1-implication}
    T\notin\mathscr{L}
        \quad\Longrightarrow\quad
        \{F_w:w\in W\setminus T\}
        \text{ has no \(\FF\)-\(k\)-transversal.}
\end{equation}

For \(q\in\FF^d\), write \(\widehat q=(q,1)\in\FF^{d+1}\), and denote \( \widehat F_w=\{(q,1):q\in F_w\}\subseteq \FF^{d+1}\).

For \(v\in V_m(\FF^{d+1})\), define
\[
        L(v)=
        \left(\operatorname{span}_{\FF}\{v_1,\dots,v_m\}\right)^{\perp_{\FF}}.
\]
Then \(L(v)\) has \(\FF\)-dimension \(k+1\).  Let
\(U(v)
        =
        \{w\in W:\widehat F_w\cap L(v)=\emptyset\}\).
If \(U(v)\notin\mathscr{L}\), then \(W'=W\setminus U(v)\) satisfies
\(W\setminus W'\notin\mathscr{L}\).  Moreover, for every \(w\in W'\), the set
\(\widehat F_w\) intersects \(L(v)\).  Therefore the affine slice
\[
        L(v)\cap (\FF^d\times\{1\})
\]
is an \(\FF\)-affine \(k\)-flat meeting every \(F_w\) with \(w\in W'\)
(unless \(W'=\emptyset\), in which case any \(k\)-flat works).  This
contradicts \eqref{eq:1-implication}.  Therefore

\begin{equation}\label{eq:2-S(v)-in-S}
    U(v)\in\mathscr{L}
        \qquad\text{for every }v\in V_m(\FF^{d+1}).
\end{equation}

Let
\(Z_m(\FF)=\{z=(z_1,\dots,z_m)\in\FF^m:\|z\|=1\}\).
The action of \((\ZZ_2)^m\) on $\FF^m$ induces an action on \(Z_m(\FF)\).  For a face \(I\in\mathcal{K}\), let \(X_I\subseteq Z_m(\FF)^I\)
be the set of all tuples \((z_w)_{w\in I}\) with the following property:

\begin{quote}
If \(b_w\ge 0\) and \(q_w\in F_w\), \(w\in I\), satisfy
\[
        \sum_{w\in I} b_w z_{w,j}=0,
        \qquad
        \sum_{w\in I} b_w z_{w,j}q_w=0
        \qquad
        \text{for }j=1,\dots,m,
\]
then \(b_w=0\) for every \(w\in I\).
\end{quote}

Define \(X\) as the following \((\ZZ_2)^m\)-invariant subspace of the join
\(\bigast_{w\in W} Z_m(\FF)\):
\[
        X=
        \left\{
        \sum_{w\in I} t_w z_w:
        I\in\mathcal{K},\quad
        t_w>0,\quad
        \sum_{w\in I}t_w=1,\quad
        (z_w)_{w\in I}\in X_I
        \right\}.
\]
This is the auxiliary space used in Sadovek's proof, written here in join
notation instead of homotopy-colimit notation.

We use the Hermitian inner product on \(\FF^{d+1}\) that is linear in
the second coordinate.  For \(v\in V_m(\FF^{d+1})\) and \(w\in U(v)\), define
\[
        D_w(v)
        =
        \left\{
        z\in Z_m(\FF):
        \operatorname{Re}
        \left\langle
        \overline{z_1}v_1+\cdots+\overline{z_m}v_m,\widehat q
        \right\rangle_{\FF}>0
        \text{ for all }\widehat q\in \widehat F_w
        \right\}.
\]
We claim that the set \(D_w(v)\) is not empty.  The condition \(w\in U(v)\) means that
\(\widehat F_w\) is disjoint from \(L(v)\), so the orthogonal projection of
\(\widehat F_w\) to \(\operatorname{span}_{\FF}\{v_1,\dots,v_m\}\) is a
compact convex set not containing the origin.  Strict separation gives the
desired vector \(z\), after normalization.

Additionally, if \(I\in\mathcal{K}[U(v)]\) and
\(z_w\in D_w(v)\) for all \(w\in I\), then \((z_w)_{w\in I}\in X_I\).
This holds because if \(b_w\ge 0\) and \(q_w\in F_w\) satisfy
\[
        \sum_{w\in I} b_w z_{w,j}=0,
        \qquad
        \sum_{w\in I} b_w z_{w,j}q_w=0
        \qquad
        \text{for }j=1,\dots,m,
\]
we can rewrite this as
\( \sum_{w\in I} b_w z_{w,j}\widehat q_w=0\) in \(\FF^{d+1}\)
         for \(j=1,\dots,m\).
Taking inner product \(\langle v_j, \cdot \rangle_{\FF}\), summing over \(j\), and taking real
parts gives
\[
        0
        =
        \sum_{w\in I}
        b_w
        \operatorname{Re}
        \left\langle
        {\overline{z_{w,1}}}v_1+\cdots+\overline{z_{w,m}}v_m,\widehat q_w
        \right\rangle_{\FF}.
\]
Each summand inside the real part is strictly positive by the definition of
\(D_w(v)\).  Therefore every \(b_w\) is zero and the tuple satisfies the
defining condition for \(X_I\).

We now construct an equivariant map from the \(N\)-skeleton of the Stiefel
manifold into \(X\).  For every \(v\in V_m(\FF^{d+1})\), choose points
\(z_w^v\in D_w(v)\) for \(w\in U(v)\).  Since all inequalities defining
\(D_w(v)\) are strict and the sets \(\widehat F_w\) are compact, there is an
open neighborhood \(O_v\) of \(v\) such that

\begin{equation}\label{eq:3-in-D-w}
    z_w^v\in D_w(u)
        \qquad
        \text{for every }u\in O_v
        \text{ and every }w\in U(v).
\end{equation}
Choose these neighborhoods and points equivariantly with respect to the
\((\ZZ_2)^m\)-action.

Take a sufficiently fine \((\ZZ_2)^m\)-equivariant triangulation of
\(V_m(\FF^{d+1})\) such that, for every simplex \(\tau\), there is a point
\(v_\tau\) with \( \operatorname{st}(\tau)\subseteq O_{v_\tau}\).
Choose the points \(v_\tau\) equivariantly over orbits of simplices.

For a simplex \(\theta\), define
\[
        U_\theta=
        \bigcup_{\tau\subseteq\theta} U(v_\tau).
\]
Given \eqref{eq:2-S(v)-in-S} and the fact that $\mathscr{L}$ is upwards closed, we have \(U_\theta\in\mathscr{L}\).  For
\(w\in U_\theta\), let
\( D_w(\theta)
        =
        \{z_w^{v_\tau}:
        \tau\subseteq\theta,\ w\in U(v_\tau)\}\).
This is a non-empty finite set.  Set
\( \Gamma(\theta)
        =
        \mathcal{K}
        \left(
        U_\theta;
        \bigl(D_w(\theta)\bigr)_{w\in U_\theta}
        \right)\).
The assignment \(\theta\mapsto \Gamma(\theta)\) is monotone under inclusion
of simplices.  Moreover, by \(N\)-admissibility,
\begin{equation}\label{eq:4-connected}
            |\Gamma(\theta)|
        \text{ is }(N-1)\text{-connected}.
\end{equation}

We view \(\Gamma(\theta)\) as a subcomplex of the join
\(\bigast_{w\in W}Z_m(\FF)\), by placing the set \(D_w(\theta)\) in the
\(w\)-th copy of \(Z_m(\FF)\).  We claim that
\(|\Gamma(\theta)|\subseteq X\).

To view this, let \(\eta\) be a face of \(\Gamma(\theta)\).  Its set of sources is
a face \(I\in\mathcal{K}[U_\theta]\).  For each vertex of \(\eta\), say
\(z_w^{v_\tau}\), condition \eqref{eq:3-in-D-w} implies that this point lies in
\(D_w(u)\) for every \(u\in\theta\), because
\(\theta\subseteq \operatorname{st}(\tau)\subseteq O_{v_\tau}\).  The observation
above then implies that the corresponding tuple belongs to \(X_I\).  Therefore,
\(\eta\) is a face of \(X\).

Using \eqref{eq:4-connected}, we construct a \((\ZZ_2)^m\)-equivariant map
\[
        s:
        \skel_N V_m(\FF^{d+1})
        \longrightarrow X
        \tag{5}
\]
by the usual extension argument (i.e. inductively on the skeleta).  On vertices choose arbitrary
points in the corresponding set.  Suppose \(s\) has been constructed on
the \(a\)-skeleton, with \(a<N\), and let \(\theta\) be an \((a+1)\)-simplex.
By the inclusions \(\Gamma(\rho)\subseteq\Gamma(\theta)\) for
\(\rho\subseteq\theta\), we have \(s(\partial\theta)\subseteq
|\Gamma(\theta)|\).  Since \(|\Gamma(\theta)|\) is \((N-1)\)-connected and
\(a\le N-1\), the boundary map extends over \(\theta\).  Doing this over
free \((\ZZ_2)^m\)-orbits of simplices gives (5).

It remains to define the test map.  For
\[
        x=\sum_{w\in I} t_w z_w\in X
\]
with \(z_w=(z_{w,1},\dots,z_{w,m})\), define the map

\begin{align*}
    \Phi: X & \to (\FF^{r+1})^m \\
    \Phi(x)
        & =
        \left(
        \sum_{w\in I}
        t_w
        \bigl(z_{w,j}\varphi(w), z_{w,j}\bigr)
        \right)_{j=1}^m
\end{align*}
This map is \((\ZZ_2)^m\)-equivariant, where the group acts on the \(j\)-th
copy of \(\FF^{r+1}\) by the sign of the \(j\)-th coordinate.

We claim that \(0\notin\Phi(X)\).  Suppose otherwise.  Then for some
\(I\in\mathcal{K}\), some \(t_w>0\), and some \((z_w)_{w\in I}\in X_I\), we
have
\[
        \sum_{w\in I} t_w z_{w,j}=0,
        \qquad
        \sum_{w\in I} t_w z_{w,j}\varphi(w)=0
        \qquad
        \text{for }j=1,\dots,m.
\]
These are \(m=d-k\) affine \(\FF\)-dependencies of the points
\(\varphi(w)\), and they are not all trivial because each \(z_w\) has norm
one and the \(t_w\)'s are positive.  Since the dependency-modeling condition
holds on every face of \(\mathcal{K}\), as any face can be extended to a facet and using zero coefficients on the added vertices, there exist real numbers
\(\lambda_w\ge 0\) and points \(q_w\in F_w\) such that
\[
        \sum_{w\in I} \lambda_w t_w z_{w,j}=0,
        \qquad
        \sum_{w\in I} \lambda_w t_w z_{w,j}q_w=0
        \qquad
        \text{for }j=1,\dots,m,
\]
and the resulting system is nontrivial.  Setting \(b_w=\lambda_w t_w\)
contradicts the definition of \(X_I\).  Therefore \(0\notin\Phi(X)\).

Composing (5) with \(\Phi\) and then with radial projection gives a
\((\ZZ_2)^m\)-equivariant map
\[
        \skel_N V_m(\FF^{d+1})
        \longrightarrow
        S\bigl((\FF^{r+1})^m\bigr).
\]
This contradicts \cref{thm:stiefel-obstruction}, proving the theorem.
\end{proof}

As done in the previous sections, the selection structure theorem, in this case \cref{thm:selection-structure-sadovek} implies the matroidal version, \cite{sadovek2026colorful}*{Thm. 1.6}, if we use the selection structure from \cref{example:matroids}.

\subsection{Dolnikov's transversal theorem}

Just like Sadovek's result implies a matroidal version of Dolnikov's central transversal theorem \cite{Dolnikov1992}, we obtain a selection structure version of Dolnikov's theorem from \cref{thm:selection-structure-sadovek}.  In the introduction we describe the real-valued version, below is the full result.  Ultimately, the topological tool at the center of this proof is \cref{thm:stiefel-obstruction}.  For earlier deductions of the central transversal theorem and its generalizations via Borsuk--Ulam-type theorems for Stiefel manifolds, see \cites{Manta2024, McGinnis2026}.

\begin{theorem}[Selection structure Dolnikov transversal theorem]
\label{thm:selection-structure-dolnikov-full}
Let \(0\le r\le k<d\), let \(\FF\in\{\rr,\cc\}\), and
\(
        h=\dim_{\rr}(\FF)\,(d-k)+1\), \(
        N=\dim_{\rr}(\FF)\,(d-k)(r+1)\).
Let \(W\) be a finite set, let \( W=W_1\sqcup\cdots\sqcup W_{r+1}\)
be a partition, and let \( \Sigma=(\mathcal{K},\mathscr{L}) \)
be an \(N\)-admissible selection structure on \(W\).
For each \(w\in W\), let \(F_w\subseteq \FF^d\) be a nonempty compact
convex set.

Assume that for every \(i=1,\dots,r+1\) and every face
\(I\in\mathcal{K}\) such that
\( I\subseteq W_i\) and \( |I|\le h\),
we have
\( \bigcap_{w\in I}F_w\neq\emptyset\).

Then there exists \(W'\subseteq W\) such that the family \( \{F_w:w\in W'\}\)
has an \(\FF\)-\(k\)-transversal and
\( W\setminus W'\notin\mathscr{L}\).
\end{theorem}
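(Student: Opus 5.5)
The plan is to deduce the statement directly from \cref{thm:selection-structure-sadovek}, with the same parameters \(r,k,d,\FF\) and the same \(N\)-admissible selection structure \(\Sigma\). It therefore suffices to construct a map \(\varphi:W\to\FF^r\) that models \((d-k,\Sigma,\FF)\)-dependencies of \(\ff\). Following the classical Dolnikov argument, I will use the partition to define \(\varphi\). Fix affinely independent points \(p_1,\dots,p_{r+1}\in\FF^r\), for instance \(0\) and the standard basis vectors. Then set \(\varphi(w)=p_i\) for \(w\in W_i\).

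Next I translate the dependency condition. Let \(B\in\operatorname{Facets}(\mathcal K)\), and let \(a_{j,w}\) be a nontrivial system with \(\sum_{w\in B}a_{j,w}=0\) and \(\sum_{w\in B}a_{j,w}\varphi(w)=0\) for \(j=1,\dots,m\), where \(m=d-k\). Grouping terms by class gives \(\sum_i c_{j,i}p_i=0\) and \(\sum_i c_{j,i}=0\), where \(c_{j,i}=\sum_{w\in B\cap W_i}a_{j,w}\). Since the \(p_i\) are affinely independent over \(\FF\), every \(c_{j,i}\) vanishes. So within each class \(B\cap W_i\), the vectors \(a_w=(a_{1,w},\dots,a_{m,w})\in\FF^m\) sum to zero. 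Because the system is nontrivial, some class \(i\) contains a \(w\) with \(a_w\neq 0\). Fix such an \(i\) and set \(I_0=B\cap W_i\).

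The key step is a Carathéodory reduction. Regard \(\FF^m\) as \(\rr^{\dim_\rr(\FF)m}\). The nonzero vectors \(a_w\) with \(w\in I_0\) still sum to zero, so \(0\) lies in their real convex hull. By Carathéodory's theorem, there is a set \(J\subseteq I_0\) with \(|J|\le \dim_\rr(\FF)m+1=h\) and real weights \(\lambda_w>0\) for \(w\in J\) such that \(\sum_{w\in J}\lambda_w a_w=0\), and each \(a_w\) with \(w\in J\) is nonzero. Since \(J\subseteq B\in\mathcal K\), the set \(J\) is a face of \(\mathcal K\) contained in \(W_i\) with \(|J|\le h\). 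The hypothesis then gives a point \(q\in\bigcap_{w\in J}F_w\).

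Now define \(q_w=q\) for \(w\in J\), and put \(\lambda_w=0\) with \(q_w\in F_w\) arbitrary for \(w\in B\setminus J\). This uses that every \(F_w\) is nonempty. Then \(\sum_w\lambda_w a_{j,w}=0\), and also \(\sum_w\lambda_w a_{j,w}q_w=\bigl(\sum_{w\in J}\lambda_w a_{j,w}\bigr)q=0\). The new system is nontrivial because \(\lambda_w a_w\neq0\) for \(w\in J\). Hence \(\varphi\) models the dependencies, and \cref{thm:selection-structure-sadovek} yields the conclusion.

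The only delicate point is that the \(\lambda_w\) must be real and nonnegative while the coefficients are complex when \(\FF=\cc\). This is why Carathéodory must be applied in the real space of dimension \(\dim_\rr(\FF)\,m\), and this matches the value of \(h\). The real version stated in the introduction, \cref{thm:selection-structure-dolnikov-real}, is the case \(\FF=\rr\), where \(h=d-k+1\).
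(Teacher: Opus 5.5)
Your proposal is correct and follows essentially the same route as the paper: the map \(\varphi(w)=p_i\) with affinely independent \(p_i\), the class-by-class vanishing of \(\sum_{w\in B\cap W_i}A_w\), the real Carath\'eodory reduction in \(\FF^m\cong\rr^{\dim_\rr(\FF)m}\) giving \(|J|\le h\), and then an application of \cref{thm:selection-structure-sadovek}. Your remark that each \(a_w\) with \(w\in J\) is nonzero makes the nontriviality of the new system slightly more explicit than the paper does.
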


\begin{proof}
We derive the theorem from \cref{thm:selection-structure-sadovek}.

Choose \(\FF\)-affinely independent points \( p_1,\dots,p_{r+1}\in \FF^r\).
Define the map
\[
        \varphi:W\longrightarrow \FF^r,
        \qquad
        \varphi(w)=p_i
        \quad\text{if }w\in W_i .
\]
We claim that \(\varphi\) models
\((d-k,\Sigma,\FF)\)-dependencies of
\(\{F_w:w\in W\}\).

Let $\mathcal{B} = \operatorname{Facets}(\mathcal{K})$ and \(B\in\mathcal B\).  Suppose that
\[
        \sum_{w\in B} a_{j,w}=0,
        \qquad
        \sum_{w\in B} a_{j,w}\varphi(w)=0,
        \qquad
        j=1,\dots,m,
\]
is a nontrivial system of \(m=d-k\) affine \(\FF\)-dependencies.
For each \(w\in B\), write
\(A_w=(a_{1,w},\dots,a_{m,w})\in \FF^m\).  Since \(p_1,\dots,p_{r+1}\) are affinely independent, for every
\(i=1,\dots,r+1\) we get
\(
        \sum_{w\in B\cap W_i} A_w=0
  \) in \(\FF^m\).
The original system is nontrivial, so there is some \(i_0\) for which the
vectors \(A_w\), \(w\in B\cap W_{i_0}\), are not all zero.

View \(\FF^m\) as a real vector space of dimension
\(\dim_{\rr}(\FF)(d-k)\). Since
\[
        \sum_{w\in B\cap W_{i_0}} A_w=0
\]
and not all of these vectors are zero, the origin lies in the convex hull of
the nonzero vectors among them. By Carathéodory's theorem, there is a subset
\( J\subseteq B\cap W_{i_0} \)
with \(|J|\le \dim_{\rr}(\FF)(d-k)+1=h \) and positive real numbers \(t_w>0\), \(w\in J\), such that
\[
        \sum_{w\in J} t_w A_w=0 .
\]
Because \(B\in\mathcal B\subseteq\mathcal{K}\) and \(\mathcal{K}\) is a
simplicial complex, \(J\in\mathcal{K}\). Also \(J\subseteq W_{i_0}\) and
\(|J|\le h\). By hypothesis, there exists a point
\(q\in \bigcap_{w\in J}F_w\).

Define
\[
        \lambda_w=
        \begin{cases}
        t_w, & w\in J,\\
        0, & w\notin J,
        \end{cases}
\]
choose \(q_w=q\) for \(w\in J\), and choose arbitrary \(q_w\in F_w\) for
\(w\notin J\). Then, for every \(j=1,\dots,m\),
\[
        \sum_{w\in B}\lambda_w a_{j,w}
        =
        \sum_{w\in J}t_w a_{j,w}
        =
        0,
\]
and
\[
        \sum_{w\in B}\lambda_w a_{j,w}q_w
        =
        q\sum_{w\in J}t_w a_{j,w}
        =
        0.
\]
This new system is nontrivial (i.e., not all coefficients are zero).  This means that \(\varphi\) models
\((d-k,\Sigma,\FF)\)-dependencies.

Now apply \cref{thm:selection-structure-sadovek} with \( N=\dim_{\rr}(\FF)(d-k)(r+1)\).
It gives us a subset \(W'\subseteq W\) such that
\(\{F_w:w\in W'\}\) has an \(\FF\)-\(k\)-transversal and
\(
        W\setminus W'\notin\mathscr{L}\),
as we wanted to show.
\end{proof}

\section{Remarks}\label{sec:remarks}

\subsection{On selection structure versions of Helly's theorem}\label{sec:remarks-helly}

This manuscript has two different proofs of the selection structure version of Helly's theorem, \cref{thm:selection-structure-helly}.

Our proof in \cref{sec:helly} applies to $d$-collapsible complexes.  This allows us to prove some selection-structure versions of Helly's theorem that are not covered by \cref{thm:selection-structure-sadovek}.

One example is a selection structure version of the Doignon--Bell--Scarf theorem \cites{Doignon1973, Bell1976/77, Scarf1977}, which is the equivalent version to Helly's theorem on the integer lattice.

\begin{theorem}[Selection structure Doignon--Bell--Scarf]\label{thm:selection-structure-doignon}
    Let $W$ be a finite set and $\mathcal{F}=\{F_w: w \in W\}$ be a family of convex sets in $\rr^d$ indexed by $W$, each with at least one point in $\zz^d$.  Let $\Sigma=(\mathcal{K}, \mathscr{L})$ be a $(2^d-1)$-admissible selection structure on $W$.  Suppose that for every $W' \subseteq W$ such that $W \setminus W' \not\in \mathscr{L}$ we have $\bigcap_{w\in  W'} F_w$ has no integer points.  Then, there exists $I \in \mathcal K$ such that $\bigcap_{w \in I}F_w$ has no integer points.
\end{theorem}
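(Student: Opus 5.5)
The plan is to apply \cref{thm:selection-structure-d-collapsible} with $d$ replaced by $2^d-1$, with $X$ the ``integer nerve'' of $\mathcal{F}$. Let
\[
X=\{\sigma\subseteq W: \textstyle\bigcap_{w\in\sigma}F_w\cap\zz^d\neq\emptyset\},
\]
a simplicial complex on $W$. Every vertex $w$ belongs to $X$, since each $F_w$ contains an integer point. The hypothesis of the theorem says exactly that $W'\notin X$ whenever $W\setminus W'\notin\mathscr{L}$. Once $X$ is known to be $(2^d-1)$-collapsible, \cref{thm:selection-structure-d-collapsible} gives $I\in\mathcal{K}$ with $I\notin X$, which is the desired conclusion.

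First we reduce to finitely many lattice points. Since $W$ is finite, only finitely many distinct intersections occur, so we may fix a finite set $Z\subseteq\zz^d$ that contains a witness point for every nonempty intersection $\bigcap_{w\in\sigma}F_w\cap\zz^d$. Replacing each $F_w$ by $F'_w=\conv(F_w\cap Z)$ gives compact polytopes. The new family has the same integer-point intersection pattern on the relevant sets: if $\bigcap_{w\in\sigma} F'_w$ contains an integer point $z$, then $z\in F_w$ for all $w\in\sigma$, because $F'_w\subseteq F_w$. Conversely, any witness in $Z$ survives. Hence $X$ is unchanged, and we may assume that each $F_w$ is the convex hull of finitely many lattice points of $Z$.

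The main step is to show that $X$ is $(2^d-1)$-collapsible. The model is Wegner's collapsing argument for nerves of convex sets, with Doignon's lattice version of Helly's theorem replacing the role of Helly's theorem. We would try the following sweep. Choose a generic linear functional $\ell$ that is injective on $Z$, and for $\sigma\in X$ let $m(\sigma)$ be the $\ell$-minimal point of $\bigcap_{w\in\sigma}F_w\cap Z$. The faces are removed in decreasing order of $m(\sigma)$, as in Wegner's proof. The key input is the lattice analogue of the basis of an LP-type problem: by Doignon--Bell--Scarf applied to the convex sets $F_w$ together with the open half-space $\{\ell<\ell(m(\sigma))\}$, there is a subfamily $\sigma_0\subseteq\sigma$ with $|\sigma_0|\le 2^d-1$ and $m(\sigma_0)=m(\sigma)$. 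The Helly number of the integer lattice is $2^d$, and one slot is taken by the half-space, which leaves $2^d-1$ sets. Such a $\sigma_0$ then has dimension at most $2^d-2$. At the current stage it lies in the unique maximal face $\{w: m(\sigma)\in F_w\}$ among the remaining faces, which permits an elementary $(2^d-1)$-collapse.

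We expect the main obstacle to be making this collapse sequence rigorous, in particular verifying uniqueness of the maximal face at each stage after earlier removals. If this is too delicate, the fallback is to prove that $X$ is $(2^d-1)$-Leray, using the standard fact that integer nerves have Leray number at most $2^d-1$ via the nerve of the finite sets $F_w\cap Z$, and then invoke \cref{thm:selection-structure-d-leray} instead. Both routes conclude in the same way.
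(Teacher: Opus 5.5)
Your top-level reduction is exactly the paper's proof. The paper takes the nerve $X$ of $\{F_w\cap\zz^d\}$, asserts that it is $(2^d-1)$-collapsible by appealing to De Loera et al., and applies \cref{thm:selection-structure-d-collapsible} with $2^d-1$ in place of $d$. You depart from the paper by trying to prove the collapsibility yourself, and that argument fails at the point you flagged.

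Wegner's sweep uses general position of the convex sets to make the minimal subfamily attaining a given minimum point unique. The lattice cannot be put in general position. So for a lattice point $p$, the faces with $m(\tau)=p$ form an up-set $R_p\subseteq 2^\eta$, where $\eta=\{w:p\in F_w\}$, and $R_p$ can have many minimal elements. For example, take $p=0$, $\ell(x,y)=y+\epsilon x$ with $\epsilon>0$ small, and let $F_1,\dots,F_4$ be the segments from $0$ to $(-1,-1)$, $(1,-1)$, $(-2,-1)$, $(2,-1)$. Every pair is a minimal element of $R_p$. Once $[\{1,2\},\eta]$ has been collapsed, the face $\eta$ is gone, and $\{3,4\}$ lies in the two maximal faces $\{1,3,4\}$ and $\{2,3,4\}$. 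So your claim that $\sigma_0$ lies in the unique maximal face $\{w:m(\sigma)\in F_w\}$ fails after the first removal at a given level, and your sketch gives no rule for ordering the removals within a level.

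This is not mere bookkeeping. The only input you extract from Doignon--Bell--Scarf is that minimal elements of $R_p$ have at most $2^d-1$ elements, and that alone is insufficient. Take $d=2$ and remove from $2^{[5]}$ the up-set generated by $\{1,2\}$ and $\{3,4,5\}$. What remains is $L=\partial\{1,2\}\ast\partial\{3,4,5\}\cong S^2$. By excision, no sequence of elementary $3$-collapses can carry out such a level: these collapses produce relative homology only in degrees at most $2$, whereas $H_3(2^{[5]},L)\cong\widetilde H_2(S^2)\neq 0$.

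To exclude this in the lattice setting you need $\widetilde H_j(L)=0$ for $j\ge 2^d-2$, where $L$ is the integer nerve of $\{F_w\cap\{\ell<\ell(p)\}\}_{w\in\eta}$. Let $h$ be a new vertex for the half-space. The integer nerve of $\{F_w\}_{w\in\eta}\cup\{\{\ell<\ell(p)\}\}$ is $2^\eta\cup(h\ast L)$, which is homotopy equivalent to the suspension of $L$. So the required vanishing is a Leray-type statement for integer nerves, which is essentially your fallback. Thus the sweep presupposes the statement it is meant to prove.

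The fallback itself is asserted, not proved. Rewriting $X$ as the nerve of the finite sets $F_w\cap Z$ gives nothing by itself, since every simplicial complex is the nerve of a finite set system; lattice convexity has to enter somewhere. As it stands, you must either import the $(2^d-1)$-collapsibility of integer nerves from the literature, as the paper does, or give a genuinely lattice-specific argument.

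A smaller point: after replacing $F_w$ by $\conv(F_w\cap Z)$, define $m(\sigma)$ over all lattice points of $\bigcap_{w\in\sigma}F_w$, not only those in $Z$. Your Doignon step needs $\bigcap_{w\in\sigma}F_w\cap\{\ell<\ell(m(\sigma))\}$ to contain no integer point at all.
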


The specialization to matroids of the Doignon--Bell--Scarf theorem is also new.  Other versions of Helly's theorem are proved via a bound on the collapsibility of the nerve complex, such as the recent results on discrete Helly theorems for axis-parallel boxes \cite{Edwards2025}.

\begin{proof}
    The nerve complex of the family of sets $\zz^d \cap F_w$ for $w \in W$ is $(2^d-1)$-collapsible.  This is a direct consequence of De Loera et al.'s proof of the colorful Doignon--Bell--Scarf theorem \cite{DeLoera2017}.  We apply \cref{thm:selection-structure-d-collapsible} to the nerve to conclude.
\end{proof}

\subsection{Direct comparison with matroids}\label{sec:improving-selection-structures}

The selection structures in \cref{sec:chessboard-matching-choice} do not come from matroids.  The complex $\Delta_{2,3}$ is graph: a cycle of length $6$.  If we apply either \cref{thm:chessboard-selection-structures} or \cref{ex:selection-structure-graph}, we get the same selection structure.  The set $\mathscr{L}$ must contain sets that contain a path with four vertices.

One point of concern might be that, even if a selection structure does not come from a matroid, it might still be possible for some matroid $M$ to imply the same applications.  In this section, we show how the case $\Delta_{2,3}$ with $\mathscr{L}$ as described above gives applications that do not follow from matroidal versions.  Let us denote this selection structure by $\Sigma_{2,3}$.

The basis graph of a rank-\(2\) matroid is a complete multipartite
graph on the nonloop elements: two elements form a basis if and only if
they lie in different parallel classes of the matroid.  If a matroid were to give the
same implications as $\Sigma_{2,3}$, every basis would have
to be a non-attacking pair, so the basis graph would have to be a subgraph of
the \(C_6\) graph \(\Delta_{2,3}\).

But a complete multipartite graph contained in \(C_6\) is either a single
edge or a two-edge star \(K_{1,2}\), up to isolated loop elements.  In either
case there is a \(5\)-element subset \(S\subseteq W\) containing no basis of
the matroid.

Therefore every rank-\(2\) matroid whose bases are non-attacking pairs has some
\(5\)-element set \(S\) of rank at most \(1\).  Since \(S\in\mathscr{L}\), the
selection-structure applications theorem would not require the covering condition for
\(W'=W\setminus S\).  The matroidal applications with $M$ would require it.

\section{Acknowledgments}

The author used ChatGPT 5.5 for simplification of proofs and for improvements for selection structures arising from chessboard complexes.  The author verified and edited all useful output and takes full responsibility for the accuracy and integrity of the manuscript.

\bibliography{refref}

@Article{Chan2020,
  author     = {Chan, Yu Hin and Chen, Shujian and Frick, Florian and Hull, J. Tristan},
  journal    = {Topol. Methods Nonlinear Anal.},
  title      = {Borsuk-{U}lam theorems for products of spheres and {S}tiefel manifolds revisited},
  year       = {2020},
  issn       = {1230-3429},
  number     = {2},
  pages      = {553--564},
  volume     = {55},
  doi        = {10.12775/tmna.2019.103},
  fjournal   = {Topological Methods in Nonlinear Analysis},
  mrclass    = {55M20 (54H25)},
  mrnumber   = {4131166},
  mrreviewer = {Tha\'is\ Fernanda Mendes Monis},
  url        = {https://doi.org/10.12775/tmna.2019.103},
}

@Article{Sadovek2026,
  author   = {Sadovek, Nikola and Sober\'on, Pablo},
  journal  = {Trans. Amer. Math. Soc.},
  title    = {Complex analogues of the {T}verberg--{V}re\'cica conjecture and central transversal theorems},
  year     = {2026},
  issn     = {0002-9947,1088-6850},
  number   = {5},
  pages    = {3665--3691},
  volume   = {379},
  doi      = {10.1090/tran/9555},
  fjournal = {Transactions of the American Mathematical Society},
  mrclass  = {52A35 (55N91 55R91)},
  mrnumber = {5060442},
  url      = {https://doi.org/10.1090/tran/9555},
}

@Article{Bjoerner1994,
  author     = {Bj\"orner, A. and Lov\'asz, L. and Vre\'cica, S. T. and {\v{Z}}ivaljevi\'c, R. T.},
  journal    = {J. London Math. Soc. (2)},
  title      = {Chessboard complexes and matching complexes},
  year       = {1994},
  issn       = {0024-6107,1469-7750},
  number     = {1},
  pages      = {25--39},
  volume     = {49},
  doi        = {10.1112/jlms/49.1.25},
  fjournal   = {Journal of the London Mathematical Society. Second Series},
  mrclass    = {52B20 (05C70 13F50 55U10)},
  mrnumber   = {1253009},
  mrreviewer = {Rafael\ H.\ Villarreal},
  url        = {https://doi.org/10.1112/jlms/49.1.25},
}

@Book{Matousek2002,
  author     = {Matou{\v{s}}ek, Ji{\v{r}}{\'{i}}},
  publisher  = {Springer-Verlag, New York},
  title      = {Lectures on discrete geometry},
  year       = {2002},
  isbn       = {0-387-95373-6},
  series     = {Graduate Texts in Mathematics},
  volume     = {212},
  doi        = {10.1007/978-1-4613-0039-7},
  mrclass    = {52Cxx (52-01)},
  mrnumber   = {1899299},
  mrreviewer = {E.\ Hertel},
  pages      = {xvi+481},
  url        = {https://doi.org/10.1007/978-1-4613-0039-7},
}

@Article{Zivaljevic1990,
  author     = {{\v{Z}}ivaljevi{\'{c}}, Rade T. and Vre{\'{c}}ica, Sini{\v{s}}a T.},
  journal    = {Bull. London Math. Soc.},
  title      = {An extension of the ham sandwich theorem},
  year       = {1990},
  issn       = {0024-6093,1469-2120},
  number     = {2},
  pages      = {183--186},
  volume     = {22},
  doi        = {10.1112/blms/22.2.183},
  fjournal   = {The Bulletin of the London Mathematical Society},
  mrclass    = {52A37 (55R10)},
  mrnumber   = {1045292},
  mrreviewer = {S.\ Y.\ Husseini},
  url        = {https://doi.org/10.1112/blms/22.2.183},
}

@Article{Dolnikov1992,
  author     = {Dolnikov, Vladimir L.},
  journal    = {Math. Notes},
  title      = {A generalization of the sandwich theorem},
  year       = {1992},
  number     = {1-2},
  pages      = {7771--779},
  volume     = {52},
  doi        = {10.1007/BF01236771},
  fjournal   = {Mathematical Notes},
  mrclass    = {28A12 (52A37)},
  mrnumber   = {1187871},
  mrreviewer = {B\'ela\ Uhrin},
  url        = {https://doi.org/10.1007/BF01236771},
}

@Article{DeLoera2017,
  author     = {De Loera, Jes\'us A. and La Haye, Reuben N. and Oliveros, D\'eborah and Rold\'an-Pensado, Edgardo},
  journal    = {Adv. Geom.},
  title      = {Helly numbers of algebraic subsets of {$\Bbb R^d$} and an extension of {D}oignon's theorem},
  year       = {2017},
  issn       = {1615-715X,1615-7168},
  number     = {4},
  pages      = {473--482},
  volume     = {17},
  doi        = {10.1515/advgeom-2017-0028},
  fjournal   = {Advances in Geometry},
  mrclass    = {52A35 (52C07)},
  mrnumber   = {3714450},
  mrreviewer = {Konrad\ J.\ Swanepoel},
  url        = {https://doi.org/10.1515/advgeom-2017-0028},
}

@Article{Scarf1977,
  author     = {Scarf, Herbert E.},
  journal    = {Proc. Nat. Acad. Sci. U.S.A.},
  title      = {An observation on the structure of production sets with indivisibilities},
  year       = {1977},
  issn       = {0027-8424},
  number     = {9},
  pages      = {3637--3641},
  volume     = {74},
  doi        = {10.1073/pnas.74.9.3637},
  fjournal   = {Proceedings of the National Academy of Sciences of the United States of America},
  mrclass    = {90C10},
  mrreviewer = {F.\ Giannessi},
  url        = {https://doi.org/10.1073/pnas.74.9.3637},
}

@Article{Bell1976/77,
  author     = {Bell, David E.},
  journal    = {Studies in Appl. Math.},
  title      = {A theorem concerning the integer lattice},
  year       = {1976/77},
  issn       = {0022-2526,1467-9590},
  number     = {2},
  pages      = {187--188},
  volume     = {56},
  doi        = {10.1002/sapm1977562187},
  fjournal   = {Studies in Applied Mathematics},
  mrclass    = {90C99},
  mrreviewer = {George\ E.\ Andrews},
  url        = {https://doi.org/10.1002/sapm1977562187},
}

@Article{Doignon1973,
  author     = {Doignon, Jean-Paul},
  journal    = {J. Geom.},
  title      = {Convexity in cristallographical lattices},
  year       = {1973},
  issn       = {0047-2468,1420-8997},
  pages      = {71--85},
  volume     = {3},
  doi        = {10.1007/BF01949705},
  fjournal   = {Journal of Geometry},
  mrclass    = {05B35 (52A35)},
  mrreviewer = {G.\ L.\ Alexanderson},
  url        = {https://doi.org/10.1007/BF01949705},
}

@Article{Vrecica2011,
  author   = {Vre\'cica, Sini\v sa T. and {\v{ Z}}ivaljevi\'c, Rade T.},
  journal  = {J. Combin. Theory Ser. A},
  title    = {Chessboard complexes indomitable},
  year     = {2011},
  issn     = {0097-3165,1096-0899},
  number   = {7},
  pages    = {2157--2166},
  volume   = {118},
  doi      = {10.1016/j.jcta.2011.04.007},
  fjournal = {Journal of Combinatorial Theory. Series A},
  mrclass  = {52B99 (05E45)},
  mrnumber = {2802193},
  url      = {https://doi.org/10.1016/j.jcta.2011.04.007},
}

@article{blagojevic2026cone,
  title={Cone and constrained colorful Carath$\backslash$'eodory Theorems},
  author={Blagojevi{\'c}, Pavle VM and Karasev, Roman N and Zsigri, B{\'a}lint},
  journal={arXiv preprint arXiv:2607.06172},
  year={2026}
}

@Article{Biermann2015,
  author   = {Biermann, Jennifer and Francisco, Christopher A. and H\`a, Huy T\`ai and Van Tuyl, Adam},
  journal  = {J. Commut. Algebra},
  title    = {Partial coloring, vertex decomposability, and sequentially {C}ohen-{M}acaulay simplicial complexes},
  year     = {2015},
  issn     = {1939-0807,1939-2346},
  number   = {3},
  pages    = {337--352},
  volume   = {7},
  doi      = {10.1216/JCA-2015-7-3-337},
  fjournal = {Journal of Commutative Algebra},
  mrclass  = {05E45 (05A15 13F55)},
  mrnumber = {3433985},
  url      = {https://doi.org/10.1216/JCA-2015-7-3-337},
}

@Article{Provan1980,
  author     = {Provan, J. Scott and Billera, Louis J.},
  journal    = {Math. Oper. Res.},
  title      = {Decompositions of simplicial complexes related to diameters of convex polyhedra},
  year       = {1980},
  issn       = {0364-765X,1526-5471},
  number     = {4},
  pages      = {576--594},
  volume     = {5},
  doi        = {10.1287/moor.5.4.576},
  fjournal   = {Mathematics of Operations Research},
  mrclass    = {52A25 (90C05)},
  mrnumber   = {593648},
  mrreviewer = {J.\ Parida},
  url        = {https://doi.org/10.1287/moor.5.4.576},
}

@Article{Ziegler1994,
  author     = {Ziegler, G\"unter M.},
  journal    = {Israel J. Math.},
  title      = {Shellability of chessboard complexes},
  year       = {1994},
  issn       = {0021-2172,1565-8511},
  number     = {1-3},
  pages      = {97--110},
  volume     = {87},
  doi        = {10.1007/BF02772986},
  fjournal   = {Israel Journal of Mathematics},
  mrclass    = {06A08 (05B35)},
  mrnumber   = {1286818},
  mrreviewer = {Rade\ \v Zivaljevi\'c},
  url        = {https://doi.org/10.1007/BF02772986},
}

@Book{Barany2021,
  author     = {B\'ar\'any, Imre},
  publisher  = {American Mathematical Society, Providence, RI},
  title      = {Combinatorial convexity},
  year       = {2021},
  isbn       = {978-1-4704-6709-8},
  series     = {University Lecture Series},
  volume     = {77},
  doi        = {10.1090/ulect/077},
  mrclass    = {52A35},
  mrnumber   = {4390801},
  mrreviewer = {Jen\H o\ Lehel},
  pages      = {viii+148},
  url        = {https://doi.org/10.1090/ulect/077},
}

@Article{DeLoera2019,
  author     = {De Loera, Jes\'us A. and Goaoc, Xavier and Meunier, Fr\'ed\'eric and Mustafa, Nabil H.},
  journal    = {Bull. Amer. Math. Soc. (N.S.)},
  title      = {The discrete yet ubiquitous theorems of {C}arath\'eodory, {H}elly, {S}perner, {T}ucker, and {T}verberg},
  year       = {2019},
  issn       = {0273-0979,1088-9485},
  number     = {3},
  pages      = {415--511},
  volume     = {56},
  doi        = {10.1090/bull/1653},
  fjournal   = {American Mathematical Society. Bulletin. New Series},
  mrclass    = {52A35 (57M99 90C25 91A80 91B32)},
  mrreviewer = {Mircea\ Balaj},
  url        = {https://doi.org/10.1090/bull/1653},
}

@Article{McGinnis2024a,
  author     = {McGinnis, Daniel and Zerbib, Shira},
  journal    = {Discrete Comput. Geom.},
  title      = {A sparse colorful polytopal {KKM} theorem},
  year       = {2024},
  issn       = {0179-5376,1432-0444},
  number     = {3},
  pages      = {945--959},
  volume     = {71},
  doi        = {10.1007/s00454-022-00464-y},
  fjournal   = {Discrete \& Computational Geometry. An International Journal of Mathematics and Computer Science},
  mrclass    = {52A35},
  mrnumber   = {4716255},
  mrreviewer = {Gabriela\ Cristescu},
  url        = {https://doi.org/10.1007/s00454-022-00464-y},
}

@InCollection{Holmsen:2017uf,
  author    = {Holmsen, Andreas F. and Wenger, Rephael},
  publisher = {Chapman and Hall/CRC},
  title     = {{Helly-type theorems and geometric transversals}},
  year      = {2017},
  edition   = {3rd},
  pages     = {91--123},
  series    = {Handbook of Discrete and Computational Geometry},
}

@article{sadovek2026colorful,
  title={On colorful generalizations of the Goodman--Pollack transversal problem},
  author={Sadovek, Nikola},
  journal={arXiv preprint arXiv:2604.19644},
  year={2026}
}

@article{blagojevic2025colorful,
  title={A Colorful Version of Carath{\'e}odory's Theorem plus a constraint},
  author={Blagojevic, Pavle VM},
  journal={arXiv e-prints},
  pages={arXiv--2509},
  year={2025}
}

@Article{Edwards2025,
  author     = {Edwards, Timothy and Sober\'on, Pablo},
  journal    = {SIAM J. Discrete Math.},
  title      = {Extensions of discrete {H}elly theorems for boxes},
  year       = {2025},
  issn       = {0895-4801,1095-7146},
  number     = {2},
  pages      = {1349--1362},
  volume     = {39},
  doi        = {10.1137/24M1658358},
  fjournal   = {SIAM Journal on Discrete Mathematics},
  mrclass    = {52A35},
  mrnumber   = {4922382},
  mrreviewer = {Mircea\ Balaj},
  url        = {https://doi.org/10.1137/24M1658358},
}

@InCollection{Barany2017,
  author    = {B\'ar\'any, Imre and Kalai, Gil and Meshulam, Roy},
  booktitle = {A journey through discrete mathematics},
  publisher = {Springer, Cham},
  title     = {A {T}verberg type theorem for matroids},
  year      = {2017},
  isbn      = {978-3-319-44478-9; 978-3-319-44479-6},
  pages     = {115--121},
  mrclass   = {05B35},
  mrnumber  = {3726596},
}

@Article{Cheong2024,
  author   = {Cheong, Otfried and Goaoc, Xavier and Holmsen, Andreas F.},
  journal  = {Discrete Comput. Geom.},
  title    = {Some new results on geometric transversals},
  year     = {2024},
  issn     = {0179-5376,1432-0444},
  number   = {2},
  pages    = {674--703},
  volume   = {72},
  doi      = {10.1007/s00454-023-00573-2},
  fjournal = {Discrete \& Computational Geometry. An International Journal of Mathematics and Computer Science},
  mrclass  = {52A35 (52C35)},
  mrnumber = {4800737},
  url      = {https://doi.org/10.1007/s00454-023-00573-2},
}

@Article{Holmsen2022,
  author  = {Holmsen, Andreas F},
  journal = {arXiv preprint arXiv:2205.04077},
  title   = {{A colorful Goodman-Pollack-Wenger theorem}},
  year    = {2022},
}

@Article{Su1999,
  author     = {Su, Francis Edward},
  journal    = {Amer. Math. Monthly},
  title      = {Rental harmony: {S}perner's lemma in fair division},
  year       = {1999},
  issn       = {0002-9890,1930-0972},
  number     = {10},
  pages      = {930--942},
  volume     = {106},
  doi        = {10.2307/2589747},
  fjournal   = {American Mathematical Monthly},
  mrclass    = {91B08 (05D05)},
  mrnumber   = {1732499},
  mrreviewer = {Konrad\ Engel},
  url        = {https://doi.org/10.2307/2589747},
}

@Article{Manta2024,
  author     = {Manta, Michael N. and Sober\'on, Pablo},
  journal    = {Discrete Comput. Geom.},
  title      = {Generalizations of the {Y}ao-{Y}ao partition theorem and central transversal theorems},
  year       = {2024},
  issn       = {0179-5376,1432-0444},
  number     = {4},
  pages      = {1381--1402},
  volume     = {71},
  doi        = {10.1007/s00454-023-00536-7},
  fjournal   = {Discrete \& Computational Geometry. An International Journal of Mathematics and Computer Science},
  mrclass    = {52A35},
  mrnumber   = {4742209},
  mrreviewer = {Gabriela\ Cristescu},
  url        = {https://doi.org/10.1007/s00454-023-00536-7},
}

@Article{Kim2024,
  author     = {Kim, Minki and Lew, Alan},
  journal    = {Forum Math. Sigma},
  title      = {Extensions of the colorful {H}elly theorem for {$d$}-collapsible and {$d$}-{L}eray complexes},
  year       = {2024},
  issn       = {2050-5094},
  pages      = {Paper No. e44, 14},
  volume     = {12},
  doi        = {10.1017/fms.2024.23},
  fjournal   = {Forum of Mathematics. Sigma},
  mrclass    = {52A35 (05E45 55U10)},
  mrnumber   = {4726502},
  mrreviewer = {L\'aszl\'o\ Szab\'o},
  url        = {https://doi.org/10.1017/fms.2024.23},
}

@InCollection{Bjoerner1992,
  author     = {Bj\"orner, Anders},
  booktitle  = {Matroid applications},
  publisher  = {Cambridge Univ. Press, Cambridge},
  title      = {The homology and shellability of matroids and geometric lattices},
  year       = {1992},
  isbn       = {0-521-38165-7},
  pages      = {226--283},
  series     = {Encyclopedia Math. Appl.},
  volume     = {40},
  doi        = {10.1017/CBO9780511662041.008},
  mrclass    = {52B40 (05B35 55N99)},
  mrnumber   = {1165544},
  mrreviewer = {Michel\ Yves\ Jambu},
  url        = {https://doi.org/10.1017/CBO9780511662041.008},
}

@Article{McGinnis2026,
  author   = {McGinnis, Daniel and Sadovek, Nikola},
  journal  = {Adv. Math.},
  title    = {A necessary and sufficient condition for {$k$}-transversals},
  year     = {2026},
  issn     = {0001-8708,1090-2082},
  pages    = {Paper No. 110829, 13},
  volume   = {490},
  doi      = {10.1016/j.aim.2026.110829},
  fjournal = {Advances in Mathematics},
  mrclass  = {52A35},
  url      = {https://doi.org/10.1016/j.aim.2026.110829},
}

@Article{Wenger1990,
  author     = {Wenger, Rephael},
  journal    = {Discrete Comput. Geom.},
  title      = {A generalization of {H}adwiger's transversal theorem to intersecting sets},
  year       = {1990},
  issn       = {0179-5376,1432-0444},
  number     = {4},
  pages      = {383--388},
  volume     = {5},
  doi        = {10.1007/BF02187799},
  fjournal   = {Discrete \& Computational Geometry. An International Journal of Mathematics and Computer Science},
  mrclass    = {52A35 (52A10)},
  mrnumber   = {1043720},
  mrreviewer = {Marilyn\ Breen},
  url        = {https://doi.org/10.1007/BF02187799},
}

@Article{Pollack1990,
  author     = {Pollack, Richard and Wenger, Rephael},
  journal    = {Combinatorica},
  title      = {Necessary and sufficient conditions for hyperplane transversals},
  year       = {1990},
  issn       = {0209-9683},
  number     = {3},
  pages      = {307--311},
  volume     = {10},
  doi        = {10.1007/BF02122783},
  fjournal   = {Combinatorica. An International Journal on Combinatorics and the Theory of Computing},
  mrclass    = {52A35},
  mrnumber   = {1092546},
  mrreviewer = {Marilyn\ Breen},
  url        = {https://doi.org/10.1007/BF02122783},
}

@Article{Goodman1988,
  author     = {Goodman, Jacob E. and Pollack, Richard},
  journal    = {J. Amer. Math. Soc.},
  title      = {Hadwiger's transversal theorem in higher dimensions},
  year       = {1988},
  issn       = {0894-0347,1088-6834},
  number     = {2},
  pages      = {301--309},
  volume     = {1},
  doi        = {10.2307/1990918},
  fjournal   = {Journal of the American Mathematical Society},
  mrclass    = {52A20 (05B35 52A35)},
  mrnumber   = {928260},
  mrreviewer = {Marilyn\ Breen},
  url        = {https://doi.org/10.2307/1990918},
}

@Article{Sperner1928,
  author   = {Sperner, Emanuel},
  journal  = {Abh. Math. Sem. Univ. Hamburg},
  title    = {Neuer beweis f\"ur die invarianz der dimensionszahl und des gebietes},
  year     = {1928},
  issn     = {0025-5858,1865-8784},
  number   = {1},
  pages    = {265--272},
  volume   = {6},
  doi      = {10.1007/BF02940617},
  fjournal = {Abhandlungen aus dem Mathematischen Seminar der Universit\"at Hamburg},
  mrclass  = {99-04},
  mrnumber = {3069504},
  url      = {https://doi.org/10.1007/BF02940617},
}

@Book{Brams:1996wt,
  author    = {Brams, Steven J. and Taylor, Alan D.},
  publisher = {Cambridge University Press},
  title     = {{Fair Division: From cake-cutting to dispute resolution}},
  year      = {1996},
  series    = {Cambridge University Press},
}

@InCollection{Amenta2017,
  author     = {Amenta, Nina and De Loera, Jes\'us A. and Sober\'on, Pablo},
  booktitle  = {Algebraic and geometric methods in discrete mathematics},
  publisher  = {Amer. Math. Soc., Providence, RI},
  title      = {Helly's theorem: new variations and applications},
  year       = {2017},
  isbn       = {978-1-4704-2321-6},
  pages      = {55--95},
  series     = {Contemp. Math.},
  volume     = {685},
  doi        = {10.1090/conm/685},
  mrclass    = {52A35},
  mrreviewer = {Jen\H o\ Lehel},
  url        = {https://doi.org/10.1090/conm/685},
}

@InCollection{Goodman1993,
  author    = {Goodman, Jacob E. and Pollack, Richard and Wenger, Rephael},
  booktitle = {New trends in discrete and computational geometry},
  publisher = {Springer, Berlin},
  title     = {Geometric transversal theory},
  year      = {1993},
  isbn      = {3-540-55713-X},
  pages     = {163--198},
  series    = {Algorithms Combin.},
  volume    = {10},
  doi       = {10.1007/978-3-642-58043-7\_8},
  mrclass   = {52A35 (52B55 68U05)},
  mrnumber  = {1228043},
  url       = {https://doi.org/10.1007/978-3-642-58043-7_8},
}

@InCollection{Tancer2013,
  author    = {Tancer, Martin},
  booktitle = {Thirty essays on geometric graph theory},
  publisher = {Springer, New York},
  title     = {Intersection patterns of convex sets via simplicial complexes: a survey},
  year      = {2013},
  isbn      = {978-1-4614-0109-4; 978-1-4614-0110-0},
  pages     = {521--540},
  doi       = {10.1007/978-1-4614-0110-0\_28},
  mrclass   = {52A35 (05E45 52A20)},
  mrnumber  = {3205172},
  url       = {https://doi.org/10.1007/978-1-4614-0110-0_28},
}

@Article{Caratheodory1907,
  author   = {Carath\'eodory, Constantin},
  journal  = {Math. Ann.},
  title    = {\"Uber den {V}ariabilit\"atsbereich der {K}oeffizienten von {P}otenzreihen, die gegebene {W}erte nicht annehmen},
  year     = {1907},
  issn     = {0025-5831,1432-1807},
  number   = {1},
  pages    = {95--115},
  volume   = {64},
  doi      = {10.1007/BF01449883},
  fjournal = {Mathematische Annalen},
  mrclass  = {99-04},
  mrnumber = {1511425},
  url      = {https://doi.org/10.1007/BF01449883},
}

@Article{Helly:1923wr,
  author  = {Helly, Eduard},
  journal = {Jahresbericht der Deutschen Mathematiker-Vereinigung},
  title   = {{\"Uber Mengen konvexer Körper mit gemeinschaftlichen Punkte.}},
  year    = {1923},
  pages   = {175--176},
  volume  = {32},
}

@Article{Komiya1994,
  author     = {Komiya, Hidetoshi},
  journal    = {Econom. Theory},
  title      = {A simple proof of {K}-{K}-{M}-{S} theorem},
  year       = {1994},
  issn       = {0938-2259,1432-0479},
  number     = {3},
  pages      = {463--466},
  volume     = {4},
  doi        = {10.1007/BF01215383},
  fjournal   = {Economic Theory},
  mrclass    = {90A14 (90D12)},
  mrnumber   = {1279465},
  mrreviewer = {F.\ Patrone},
  url        = {https://doi.org/10.1007/BF01215383},
}

@Article{Wegner1975,
  author     = {Wegner, Gerd},
  journal    = {Arch. Math. (Basel)},
  title      = {{$d$}-collapsing and nerves of families of convex sets},
  year       = {1975},
  issn       = {0003-889X,1420-8938},
  pages      = {317--321},
  volume     = {26},
  doi        = {10.1007/BF01229745},
  fjournal   = {Archiv der Mathematik},
  mrclass    = {57C05 (52A05)},
  mrnumber   = {375333},
  mrreviewer = {N.\ Pol\'akov\'a},
  url        = {https://doi.org/10.1007/BF01229745},
}

@Article{Kalai2005,
  author     = {Kalai, Gil and Meshulam, Roy},
  journal    = {Adv. Math.},
  title      = {A topological colorful {H}elly theorem},
  year       = {2005},
  issn       = {0001-8708,1090-2082},
  number     = {2},
  pages      = {305--311},
  volume     = {191},
  doi        = {10.1016/j.aim.2004.03.009},
  fjournal   = {Advances in Mathematics},
  mrclass    = {52A35 (55N35)},
  mrnumber   = {2103215},
  mrreviewer = {Yu.\ A.\ Shashkin},
  url        = {https://doi.org/10.1016/j.aim.2004.03.009},
}

@Article{Frick2019,
  author     = {Frick, Florian and Zerbib, Shira},
  journal    = {Combinatorica},
  title      = {Colorful coverings of polytopes and piercing numbers of colorful {$d$}-intervals},
  year       = {2019},
  issn       = {0209-9683,1439-6912},
  number     = {3},
  pages      = {627--637},
  volume     = {39},
  doi        = {10.1007/s00493-018-3891-1},
  fjournal   = {Combinatorica. An International Journal on Combinatorics and the Theory of Computing},
  mrclass    = {52A35 (05B40 52B11 55M20)},
  mrnumber   = {3989263},
  mrreviewer = {Konrad\ J.\ Swanepoel},
  url        = {https://doi.org/10.1007/s00493-018-3891-1},
}

@article{Mcginnis2024survey,
  title={{Using the KKM theorem}},
  author={McGinnis, Daniel and Zerbib, Shira},
  journal={arXiv preprint arXiv:2408.03921},
  year={2024}
}

@Article{Asada2018,
  author     = {Asada, Megumi and Frick, Florian and Pisharody, Vivek and Polevy, Maxwell and Stoner, David and Tsang, Ling Hei and Wellner, Zoe},
  journal    = {SIAM J. Discrete Math.},
  title      = {Fair division and generalizations of {S}perner- and {KKM}-type results},
  year       = {2018},
  issn       = {0895-4801,1095-7146},
  number     = {1},
  pages      = {591--610},
  volume     = {32},
  doi        = {10.1137/17M1116210},
  fjournal   = {SIAM Journal on Discrete Mathematics},
  mrclass    = {91B32 (52A35 54H25)},
  mrnumber   = {3769696},
  mrreviewer = {Lei\ Deng},
  url        = {https://doi.org/10.1137/17M1116210},
}

@article{McGinnis2024-arxiv,
  title={{Matroid colorings of KKM covers}},
  author={McGinnis, Daniel},
  journal={arXiv preprint arXiv:2409.03026},
  year={2024}
}

@Article{Soberon2022,
  author   = {Sober\'{o}n, Pablo},
  journal  = {Proc. Amer. Math. Soc. Ser. B},
  title    = {Fair distributions for more participants than allocations},
  year     = {2022},
  issn     = {2330-1511},
  pages    = {404--414},
  volume   = {9},
  doi      = {10.1090/bproc/129},
  fjournal = {Proceedings of the American Mathematical Society. Series B},
  mrclass  = {91B32},
  url      = {https://doi.org/10.1090/bproc/129},
}

@Article{Gale1984,
  author   = {Gale, David},
  journal  = {Internat. J. Game Theory},
  title    = {Equilibrium in a discrete exchange economy with money},
  year     = {1984},
  issn     = {0020-7276},
  number   = {1},
  pages    = {61--64},
  volume   = {13},
  doi      = {10.1007/BF01769865},
  fjournal = {International Journal of Game Theory},
  mrclass  = {90A14},
  url      = {https://doi.org/10.1007/BF01769865},
}

@Article{Knaster:1929vi,
  author  = {Knaster, Bronis{\l}aw and Kuratowski, Kazimierz and Mazurkiewicz, Stefan},
  journal = {Fundamenta Mathematicae},
  title   = {{Ein Beweis des Fixpunktsatzes f{\"u}r n-dimensionale Simplexe}},
  year    = {1929},
  number  = {1},
  pages   = {132--137},
  volume  = {14},
}

@Article{Shih1993,
  author     = {Shih, Mau-Hsiang and Lee, Shyh Nan},
  journal    = {Math. Ann.},
  title      = {Combinatorial formulae for multiple set-valued labellings},
  year       = {1993},
  issn       = {0025-5831,1432-1807},
  number     = {1},
  pages      = {35--61},
  volume     = {296},
  doi        = {10.1007/BF01445093},
  fjournal   = {Mathematische Annalen},
  mrclass    = {05A05 (05A15 52B05)},
  mrnumber   = {1213370},
  mrreviewer = {George\ M.\ Rassias},
  url        = {https://doi.org/10.1007/BF01445093},
}

@InCollection{Shapley1973,
  author     = {Shapley, Lloyd S.},
  booktitle  = {Mathematical programming ({P}roc. {A}dvanced {S}em., {U}niv. {W}isconsin, {M}adison, {W}is., 1972)},
  publisher  = {Academic Press, New York-London},
  title      = {On balanced games without side payments},
  year       = {1973},
  pages      = {261--290},
  mrclass    = {90D40},
  mrnumber   = {389244},
  mrreviewer = {Louis\ J.\ Billera},
}

@Article{Holmsen2016,
  author     = {Holmsen, Andreas F.},
  journal    = {Adv. Math.},
  title      = {The intersection of a matroid and an oriented matroid},
  year       = {2016},
  issn       = {0001-8708,1090-2082},
  pages      = {1--14},
  volume     = {290},
  doi        = {10.1016/j.aim.2015.11.040},
  fjournal   = {Advances in Mathematics},
  mrclass    = {52A35 (05B35 52C40 55U10)},
  mrnumber   = {3451916},
  mrreviewer = {Winfried\ Hochst\"attler},
  url        = {https://doi.org/10.1016/j.aim.2015.11.040},
}

@Article{Barany1982,
  author     = {B\'ar\'any, Imre},
  journal    = {Discrete Math.},
  title      = {A generalization of {C}arath\'eodory's theorem},
  year       = {1982},
  issn       = {0012-365X,1872-681X},
  number     = {2-3},
  pages      = {141--152},
  volume     = {40},
  doi        = {10.1016/0012-365X(82)90115-7},
  fjournal   = {Discrete Mathematics},
  mrclass    = {52A35},
  mrreviewer = {Marilyn\ Breen},
  url        = {https://doi.org/10.1016/0012-365X(82)90115-7},
}

\end{document}